\title[Stochastic derivatives and generalized $h$-transforms]{Stochastic derivatives and generalized $h$-transforms of Markov processes}
\author{Christian L\'eonard}
\date{\today}
\newtheorem{theorem}{Theorem}
\newtheorem{lemma}[theorem]{Lemma}
\newtheorem{proposition}[theorem]{Proposition}
\newtheorem{corollary}[theorem]{Corollary}
\newtheorem{result}[theorem]{Result}
\newtheorem{definition}[theorem]{Definition}
\theoremstyle{remark}
\newtheorem{remark}[theorem]{Remark}
\newtheorem{remarks}[theorem]{Remarks}
\numberwithin{theorem}{section}
\newcommand{\RR}{\mathbb{R}}
\newcommand\Rd{\RR^d}
\newcommand{\1}{\mathbf{1}}
\newcommand\pf{_{\#}}
\newcommand\as{\textrm{-a.s.}}
\renewcommand\ae{\textrm{-a.e.}}
    \DeclareMathOperator{\dom}{dom}
    \DeclareMathOperator{\supp}{supp}
\newcommand{\boulette}[1]{$\bullet$\ Proof of #1.}
\newcommand{\Boulette}[1]{\par\medskip\noindent $\bullet$\ Proof of #1.}
\newcommand\seq[2]{(#1_#2)_{#2\ge1}}
\newcommand\Seq[2]{(#1^#2)_{#2\ge1}}
\newcommand\Lim[1]{\lim_{#1\rightarrow\infty}}
\newcommand\Limh{\lim_{h\downarrow0}}
\newcommand{\lsc}{lower semicontinuous}
\newcommand{\cadlag}{c\`adl\`ag}
\newcommand\XX{\mathcal{X}}
\newcommand\OO{\Omega}
\newcommand\ii{{[0,1]}}
\newcommand\iX{{\ii\times\XX}}
\newcommand\iO{{\ii\times\OO}}
\newcommand{\iR}{\ii\times\Rd}
\newcommand\PX{\mathrm{P}(\XX)}
\newcommand\PO{\mathrm{P}(\OO)}
\newcommand\IX{\int_{\XX}}
\newcommand\IO{\int_\OO}
\newcommand\IiX{\int_{\iX}}
\newcommand\IR{\int_{\Rd}}
\newcommand\UR{\mathcal{U}_R}
\newcommand\GR{\Gamma^R}
\newcommand\LR{\mathcal{L}^R}
\newcommand\GP{\Gamma^P}
\newcommand\LP{\mathcal{L}^P}
\newcommand\XXX[2]{X_{[#1,#2]}}
\newcommand\II[2]{\int_{[#1,#2]}}
\renewcommand\lg[2]{#1_{#2^-}}
\newcommand\Xb{\overline{X}}
\newcommand\Pb{\overline{P}}
\newcommand\dPR{\frac{dP}{dR}}
\newcommand\BX{\mathrm{B}(\XX)}
\newcommand\lo{{\lambda_o}}
\newcommand{\DD}{D_\RR}
\newcommand\Gt{\widetilde{G}} 
\newcommand\Vt{\widetilde{V}}
\newcommand{\MP}{\mathrm{MP}}
\newcommand{\CCc}{\mathcal{C}^{1,2}_c(\iR)}
\newcommand{\Cc}{\mathcal{C}^{2}_c(\Rd)}
\newcommand{\nablat}{{\widetilde\nabla}}
\newcommand{\xx}{\mathrm{x}}
\renewcommand{\gg}{g_1}
\newcommand{\ff}{f_0}
\newcommand{\gs}{{\gamma^*}}
\newcommand{\Lg}{{L^\gamma(m)}}
\newcommand{\Lgs}{{L^{\gamma^*}(m)}}
\newcommand{\Lexp}{{L^{\exp}(m)}}
\newcommand{\LlogL}{{L\log L(m)}}
\newcommand{\NgR}{\|_{L^{\gamma}(R)}} 
\newcommand{\Ngm}{\|_{L^\gamma(m)}}
\begin{document}


 \address{Modal-X. Universit\'e Paris Ouest. B\^at.\! G, 200 av. de la R\'epublique. 92001 Nanterre, France}
 \email{christian.leonard@u-paris10.fr}
\keywords{Markov process, generalized $h$-transform, stochastic derivative, relative entropy, Feynman-Kac formula}
 \subjclass[2000]{60J25, 60J27, 60J60, 60J75}

\begin{abstract}
Let $R$ be a continuous-time Markov process on the time interval $\ii$  with values in some state space $\XX.$  We transform this reference process $R$ into $P:=\ff(X_0)\exp \left(-\II01 V_t(X_t)\,dt\right)\gg(X_1)\,R$ where $\ff,\gg$ are nonnegative measurable functions on $\XX$ and $V$ is some measurable function on $\iX.$ It is easily seen that $P$ is also Markov. The aim of this paper is to identify the Markov generator of $P$ in terms of the Markov generator of $R$ and of the additional ingredients: $\ff,\gg$ and $V$ in absence of regularity assumptions on $\ff,\gg$ and $V.$ 
\\
As a first step, we show that the extended generator of a Markov process is essentially its stochastic derivative. Then, we compute the stochastic derivative of $P$ to identify its generator, under a finite entropy condition. The abstract results are illustrated with continuous diffusion processes on $\Rd$ and  Metropolis algorithms on a discrete space.
\end{abstract}

\maketitle
    \tableofcontents


\section{Introduction}

We consider continuous-time Markov processes with values in some Polish space $\XX$ equipped with its Borel $\sigma$-field. 

\subsection*{Notation}
Let us fix some notation. The path space is the set $$\OO=D(\ii,\XX)$$ of all right continuous and left limited  (\cadlag)  $\XX$-valued trajectories $\omega=(\omega_t)_{t\in\ii}\in\OO.$ It is equipped with the cylindrical $\sigma$-field:  $\sigma(X_t;t\in\ii)$ which is generated by the canonical process $X=(X_t)_{t\in\ii}$ defined for each $t\in\ii$ and $\omega\in\OO$ by $X_t(\omega)=\omega_t\in\XX.$
We denote $\PO$ the set of all probability measures on $\OO.$ As usual, we call process any $P\in\PO$ or any random element of $\OO$ as well. For any $\mathcal{T}\in\ii,$ we denote $X_\mathcal{T}=(X_t)_{t\in \mathcal{T}}$ and the push-forward measure $P_\mathcal{T}=(X_\mathcal{T})\pf P.$ In particular, for any
 $0\le r\le s\le1,$ $X_{[r,s]}=(X_t)_{r\le t\le s}$, $P_{[r,s]}=(X_{[r,s]})\pf P$ and $P_t=(X_t)\pf P\in\PX$ denotes the law of the position $X_t$ at time $t$ where $\PX$ the set of all probability measures on $\XX.$ The filtration is the canonical one: $\left(\sigma(X_{[0,t]}); t\in\ii\right).$

\subsection*{Aim of the article}

Let $R$ be the law of some nicely behaved Markov process. We take this probability measure $R$ as our \emph{reference} law (this explains its unusual name $R$) and call \emph{generalized $h$-transform} of $R$,  any  $P\in\PO$ which is absolutely continuous with respect to $R:$ $P\ll R,$ and with its Radon-Nikodym derivative of the special form:
\begin{equation}\label{eq-26}
P=\ff(X_0)\exp\left(-\II01 V_t(X_t)\,dt\right)\gg(X_1)\ R
\end{equation}
where $\ff,\gg:\XX\to[0,\infty)$ are nonegative Borel measurable functions on $\XX$,  the potential $V:\iX\to\RR$ is also assumed to be Borel measurable on $\iX$
 and all of them satisfy integrability conditions  such that \eqref{eq-26} defines a probability measure. We also say for short that $P$ is an \emph{$h$-process}.

It is easy to show (Proposition \ref{res-10} below) that $P$ inherits the Markov property from $R.$ Consequently, it is tempting to know more about its infinitesimal generator. The aim of this article is to derive the generator of the Markov process $P$ without assuming too many regularity conditions on $R$, $\ff, \gg$ and $V.$

\subsection*{Usual $h$-transform}

Motivated by potential theory, the special case when $V\equiv 0$ but the terminal time $t=1$ is replaced by some stopping time $\tau:$
\begin{equation*}
P=\ff(X_0)h(X_\tau)\ R^{(\tau)},
\end{equation*}
 has been introduced in 1957 by J.L. Doob  \cite{Doob57,Doob84} with $R^{(\tau)}$ a  Wiener process $R$ killed at  the exit time $\tau$ of  a bounded domain $D$ of $\Rd$.  In this situation, for all $t\ge0$ and $x$ in  $D,$ the transition probability distributions of $P$ are given by
$$
	P(X_t\in dz\mid X_0=x)\propto h_t(z)\,R_t^{(\tau)}(dz\mid X_0=x)
$$
where $\propto$ means ``proportional to'' and  $z\mapsto h_t(z)=E_R[\1_{\{\tau>t\}} h(X_\tau)\mid X_t=z]$ is a \emph{space-time harmonic} function on $D;$ this explains the letter $h$. 

\subsection*{An example}

In this paper, we shall only be concerned with the transform defined by \eqref{eq-26}, without stopping times. As an example, suppose that the reference process $R$ is the unique solution of some stochastic differential equation
\begin{equation*}
dX_t=b(X_t)\,dt+\sigma(X_t)\,dW_t
\end{equation*}
with locally Lipschitz coefficients $b$ and $\sigma,$ where $W$ is a standard Wiener process on $\XX=\Rd.$ This implies that $R$ is a solution of the martingale problem $\MP(b,a):$
$$
R\in\MP(b,a),
$$ 
with $b$ an adapted (drift) vector field and $a=\sigma \sigma^*$ an adapted (diffusion) matrix field. Since $P\ll R,$ Girsanov's theory tells us  that there exists some adapted vector field $\beta$ such that $P$ solves 
$$
P\in\MP(b+a \beta,a).
$$
Now the problem is to express $\beta$ in terms of the ingredients $a,b,\ff,\gg$ and $V.$ Specifying the abstract results of this article to  this continuous diffusion case leads to the next result (see Theorem \ref{res-18} below): The additional drift term $\beta$ can be written as
\begin{equation}\label{eq-28}
\beta(t,x)=\nablat^P	\psi(t,x),\quad dtP_t(dx)\ae
\end{equation}
where 
\begin{equation}\label{eq-27}
\psi(t,x):=\log E_R\Big[\exp \Big(-\II t1 V_s(X_s)\,ds\Big)\gg(X_1)\mid X_t=x\Big],\quad dtP_t(dx)\ae
\end{equation}
is defined $dtP_t(dx)\ae$ and $\nablat^P$ is some linear operator which we call the \emph{$P$-extended gradient}. This gradient coincides with the usual one on smooth functions: $\nablat^P u=\nabla u,$ for all $u\in\Cc,$ when the diffusion matrix $a$ has full rank. Of course, if $R$ admits a regularizing and positivity improving transition probability density (for instance if $R$ is the Wiener measure) and $V=0,$ then $\psi(t,x):=\log E_R(\gg(X_1)\mid X_t=x)$ is well-defined and smooth on $[0,1)\times \Rd$ and $\beta=\nabla \psi$. This situation is investigated in details by H. F\"ollmer \cite{Foe85}.  On the other hand, when $V$ is a non-regular measurable function, even if $R$ admits a regularizing semigroup, $\psi$ may be a non-regular continuous function and \eqref{eq-28} has an unusual meaning.

\subsection*{Non-regularity of $V$}

The transition probability distributions in both directions of time of the generalized $h$-transform $P$  are the Euclidean analogues \cite{CZ91,CZ08}  of  the Feynman propagators \cite{FH65}  in the sense that for all $t\in\ii$ 
\begin{eqnarray*}
P(X_t\in dz\mid X_0=x)&\propto&E_R \left[\exp \left(-\II t1 V_s(X_s)\,ds\right)\gg(X_1)\mid X_t=z\right]\,R(X_t\in dz\mid X_0=x)\\ 
P(X_t\in dz\mid X_1=y)&\propto&
E_R \left[\ff(X_0)\exp \left(-\II 0t V_s(X_s)\,ds\right) \mid X_t=z\right]\,R(X_t\in dz\mid X_1=y).
\end{eqnarray*}
As non-regular potentials $V$ are usual in physics, for instance discontinuous potentials with vertical asymptotic directions, we do not even assume that $V$ is  continuous. 

From another view point, \eqref{eq-26} is the generic form of the solution of the minimizer of the relative entropy 
$$
H(P|R):=\IO \log \left(\dPR\right)\,dP\in [0,\infty]
$$
which is seen as a function of $P$, subject to the constraints that its initial law $P_0$ is equal to some given $\mu_0\in\PX$ and its flow of time-marginal laws $(P_t)_{t\in\ii}$ solves some prescribed Fokker-Planck evolution equation. In this convex optimization  problem,  $\ff,\gg$ and $V$ act like Lagrange multipliers. See \cite{Csi75,Foe85,CL94,CL95,CL96} for  related entropy minimization problems and \cite{Leo01b} for a convex analytic derivation of this statement.  For instance, when  motivated by stochastic mechanics \cite{Nel88}, the above mentioned Fokker-Planck equation is related to the solution  of some Schr\"odinger equation and its drift term  explodes on the (nodal) set where the wave function  vanishes. This enforces  irregularities of $V.$ See the introduction of \cite{MZ85} for a brief explanation of this point and also Eq. (8) of  \cite{MZ85} where the potential $V_t(x)=\frac{A^R \Phi_t}{\Phi_t}(x)$ appears, with $A^R$  the Markov generator of $R$ and $\Phi$ the wave function.

\subsection*{Previous approaches to this problem}

Let $P\in\PO$ be a Markov process and $T_{s,t}^Pu(x):=E_P[u(X_t)\mid X_s=x],$ $u\in U,$ $0\le s \le t,$ be its semigroup on some Banach function space $(U,\|\cdot\|_U).$ For instance $U$ may be the space of all  bounded Borel measurable functions on $\XX$ equipped with the topology of uniform convergence. Its infinitesimal generator is $A^P=(A^P_t)_{t\in\ii}$ with
\begin{equation}\label{eq-41}
	A^P_tu(x):=\|\cdot\|_U\hbox{-}\Limh \frac 1h E_P[u(X_{t+h})-u(X_t)\mid X_t=x],
	\quad u\in\dom A ^P
\end{equation}
where the domain $\dom A^P$ of $A^P$ is precisely the set of all functions $u\in U$ such that the above strong limit exists for all $t\in [0,1)$ and $x\in\XX.$
We have seen with \eqref{eq-28} and \eqref{eq-27} that the function $g$ defined by
\begin{equation}\label{eq-29}
	g_t(x):=E_R \left[\exp \left(-\II t1 V_s(X_s)\,ds\right)\gg(X_1)\mid X_t=x\right],\quad dtP_t(dx)\ae
\end{equation}
plays an important role in the description of the dynamics of $P.$  
One can prove rather easily (see \cite{RY99} for instance) that when $g$ is positive and regular enough,  the generator $A^P$ of the Markov semigroup associated with $P$ is given for regular enough functions $u$ on $\XX,$ by
\begin{equation}\label{eq-40}
A^P_t u(x)=A^R u(x)+\frac{\Gamma(g_t,u)}{g_t}(t,x),\quad (t,x)\in\iX
\end{equation}
where $\Gamma$ is the carr\'e du champ operator, defined for all functions $u,v$ such that $u,v$ and the product $uv$ belong to the domain $\dom A^R$ of $A^R,$ by
$$
	\Gamma(u,v)=A^R(uv)-u A^Rv-vA^Ru.
$$
For Eq.\ \eqref{eq-40} to be meaningful, it is necessary that for all $t\in\ii,$ $g_t$ and the product $g_tu$ belong to $\dom A^R.$ 
But we have already noticed that with a non-regular potential $V$, $g$ might be non-regular as well. There is no reason why $g_t$ and $g_tu$ are in $\dom A^R$ in general.

Clearly, one must drop the semigroup approach and work with semimartingales or Dirichlet forms. The Dirichlet form theory is natural for constructing irregular processes and has been employed in similar contexts, see \cite{Alb00}.  But it is made-to-measure  for reversible processes and not very efficient when going   beyond reversibility. Let us have a look at the semimartingale approach. Working with semimartingales means that instead of the infinitesimal semigroup generators $A^R$ and $A^P$, we consider \emph{extended generators} in the sense of the Strasbourg school \cite{DM4}, see Definition \ref{def-05} below. This natural idea has already been implemented by P.-A. Meyer and W.A. Zheng \cite{MZ84, MZ85} in the context of stochastic mechanics and also by  P. Cattiaux and the author in \cite{CL94,CL96} for solving  related entropy minimization problems. But one still had to face the remaining problem of giving some sense to $\Gamma(g_t,u)$. Consequently, restrictive assumptions were imposed: reversibility in \cite{MZ85}  and,  in \cite{CL96}, the standard hypothesis that the domain of the extended generator of $R$ contains a ``large'' subalgebra. In practice this last requirement is not easy to verify, except for standard regular processes. In particular, it is difficult to find criteria  for this property to be inherited by $P$ when $P\ll R.$
\\
In the present article, we overcome these limitations by choosing a  different strategy which is based on stochastic derivatives and in some sense is more direct. 

\subsection*{Further developments}

Generalized $h$-processes are not only designed for Euclidean quantum mechanics \cite{CZ08} or stochastic mechanics \cite{Nel88}.
\begin{enumerate}[(i)]
\item
They are a valuable tool for obtaining a new  look at Hamilton-Jacobi-Bellman equations, by comparing the definition \eqref{eq-26} with the usual Girsanov exponential Radon-Nikodym density.
\item
Because of the time symmetry of their definition when $R$ is assumed to be reversible, they may bring interesting information about time reversal. 
\item
Even when $V$ is zero, \eqref{eq-26} provides an interesting process $P$ which is sometimes called a Schr\"odinger bridge. It minimizes $H(P|R)$ subject to the  marginal constraints $P_0=\mu_0$ and $P_1=\mu_1$. A connection with optimal transport is described in \cite{Leo10a}. The flow $(P_t)_{t\in\ii}$ of this bridge is similar to the displacement interpolation introduced by R. McCann \cite{McC95} which is used for deriving functional inequalities or as a heuristic guideline in the so-called Otto calculus, see \cite{Vill09}. This suggests that  using $(P_t)_{t\in\ii}$ instead of the displacement interpolation could yield interesting results.
\end{enumerate}
These potential developments will be investigated in future works.

\subsection*{Outline of the paper}

The \emph{stochastic derivative}  $L^P$ of $P:$
$$
	L^P_t u(x):=\Limh \frac1h E_P \left[u(X_{t+h})-u(X_t)\mid  X_t=x\right],\quad u\in\dom L^P
$$
(compare \eqref{eq-41}) was introduced by E. Nelson in \cite{Nel67}. As usual, $\dom L^P$ is defined to be the set of all functions $u$ such that the above limit exists, for the exact definition see Definition \ref{def-04}.

As a first step, we show that for a Markov process $P$, the stochastic derivative  is equal to the extended generator $\LP$ on a large class of  functions $u$ on $\XX:$ 
$$
\LP_t u(x)=L^P_tu(x),\quad dtP_t(dx)\ae
$$
This identity is the purpose of next Section \ref{sec-SD} whose main results are Theorem \ref{res-02} and Proposition \ref{res-03}. The key of Theorem \ref{res-02}'s proof is the convolution Lemma \ref{res-21}.

With this general tool at hand, it remains to compute $L^Pu$ for sufficiently many functions $u$ to determine the martingale problem associated with $P.$ And in  view of \eqref{eq-40}, with $g_t$  defined at \eqref{eq-29}, this essentially amounts to : 
\begin{enumerate}
\item[(i)]
Prove that $g_t\in\dom L^R$ and compute $L^Rg_t$;
\item[(ii)]
Prove that $g_t u\in\dom L^R$  for many ``regular'' functions $u.$
\end{enumerate}

Problem (i) is solved at Section \ref{sec-FK} by means of standard integration technics.

Problem (ii) is trickier. We solve it at Section \ref{sec-Gh} by assuming that the relative entropy of $P$ with respect to $R$ is finite:
$$
H(P|R)<\infty.
$$
The main technical step for solving this problem is Lemma \ref{res-07} which allows us not to rely on Girsanov's theory in its usual form. In particular our abstract results are valid without assuming that $R$ has the representation property (any $R$-martingale can be represented as some stochastic integral). 

The main result of this paper is Theorem \ref{res-09}. It extends \eqref{eq-40}.

At Sections \ref{sec-diffusion} and \ref{sec-chain} we examplify  our abstract results by means of continuous diffusion processes on $\Rd$ and time-continuous Markov chains. The main results of these sections are Theorem \ref{res-18} which states \eqref{eq-28} and Theorem \ref{res-20} which describes the dynamics of the $h$-transforms of \emph{Metropolis algorithms} on a discrete countable state space $\XX.$

\section{Stochastic derivatives}\label{sec-SD}

We denote for any $t\in\ii,$ $\Xb_t:=(t,X_t)\in\iX$ and for any stopping time $Y^\tau_t:=Y_{t\wedge \tau}$ and $\Xb^\tau_t:=(t\wedge \tau,X_{t\wedge \tau}).$

Let $P$ be a probability measure on $\OO.$ Recall that a process
$M$ is called a \emph{local $P$-martingale} if there exists a
sequence $\seq\tau k$ of $\ii\cup\{\infty\}$-valued stopping times
such that $\Lim k\tau_k=\infty,$ $P\as$ and for each $k\ge1,$ the
stopped process $M^{\tau_k}$ is a uniformly integrable
$P$-martingale. A process $Y$ is called a \emph{special
$P$-semimartingale} if $Y=B+M,$ $P\as$ where $B$ is a predictable
bounded variation process and $M$ is a local $P$-martingale.

\begin{definition}[Nice semimartingale]\label{def-01}
A process $Y$ is called a nice\footnote{This is a ``local'' definition in the sense that this notion  probably appears somewhere else with another name.} $P$-semimartingale if $Y=B+M$ is a
special $P$-semimartingale and the bounded variation process $B$
has absolutely continuous sample paths $P\as$
\end{definition}

\begin{definition}[Extended generator of a Markov process]\label{def-05}
Let $P$ be a Markov process. A measurable function $u$ on $\iX$ is said to be in
the domain of the extended generator of $P$ if there exists a
measurable function $v$ on $\iX$  such that
 $\II01|v(t, X_t)|\,dt<\infty,$ $P\ae$ and the process $$M^u_t:=u(t,X_t)-u(0,X_0)-\II0t v(s,
    X_s)\,ds,\quad 0\le t\le1,$$ is a local $P$-martingale.
We denote
$$
    v(t,x)=:\LP u(t,x)
$$
and call $\LP $ the extended generator of $P.$ The domain
of the extended generator of $P$ is denoted by $\dom
\LP .$
\end{definition}

\begin{remarks}\label{rem-02}\ \begin{enumerate}[(a)]
\item
In other words, the measurable function $u$ on $\iX$ is in
$\dom\LP $ if the process $u(t,X_t)$ is a nice
$P$-semimartingale.
\item
The adapted process $t\mapsto\II0t
v(s, X_s)\,ds$ is predictable since it is continuous. 
\item
$M^u$ admits a \cadlag\ $P$-version as a local $P$-martingale (we always choose this regular version).
\item
In many situations it is enough to consider continuous functions $u$. But it will be useful at some point to consider $\LP g$ with $g$ given by \eqref{eq-29} and it is not clear a priori that $g$ is continuous in the general case, see Theorem \ref{res-09} and Lemma \ref{res-14} below for instance. This is the reason why we do not restrict $\dom\LP$ to continuous functions.
\item
The notation $v=\mathcal{L}u$ almost rightly suggests that $v$ is
a function of $u.$ Indeed, when $u$ is in $\dom\LP ,$
the Doob-Meyer decomposition of the special semimartingale
$u(t,X_t)$  into its predictable bounded variation part $\int
v_s\,ds$ and its local martingale part is unique. But one can
modify $v=\LP u$ on a small (zero-potential) set without
breaking the martingale property. As a consequence, $u\mapsto
\LP u$ is a multivalued operator and $u\mapsto
\LP u$ is an almost linear operation.
\item

 Suppose that $t_o$ is a fixed time of discontinuity of $P,$ i.e.\ $P(X_{t_o}\not=\lg X{t_o})>0.$ Then, in general a continuous function $u$ cannot be in $\dom\LP$. For this reason, one should think of the notion of extended generator for processes $P$ that do not have any fixed time of discontinuity:
 $P(X_t\not=\lg Xt)=0,$ for all $t\in\ii.$
\end{enumerate}\end{remarks}

The notion of generator is tightly connected with that of martingale problem.

\begin{definition}[Martingale problem]\label{def-02}
Let $\mathcal{C}$ be a class of measurable real functions $u$ on $\iX$
and for each $u\in\mathcal{C},$ let $\mathcal{L}u:\iX\to\RR$  be a
measurable function such that $\II01 |\mathcal{L}u(t,\omega_t)|\,dt<\infty$ for all $\omega\in\OO.$ Take also a probability measure
$\mu_0\in\PX.$ One says that $Q\in\PO$ is a solution to the
martingale problem $\MP(\mathcal{L},\mathcal{C};\mu_0)$ if
$Q_0=\mu_0\in\PX$ and for all $u\in\mathcal{C},$ the process
$$
    u(t,X_t)-u(0,X_0)-\II0t \mathcal{L}u(s,X_s)\,ds
$$
is  a local $Q$-martingale.
\end{definition}
As in Definition \ref{def-05}, this local martingale admits a  \cadlag\ $Q$-version.
\\
Playing with the definitions, it is clear that any Markov
law $Q\in\PO$  is a
solution to $\MP(\mathcal{L}^Q,\mathcal{C};Q_0)$ where $\mathcal{L}^Q$ is the extended generator of $Q$ and
$\mathcal{C}$ is any nonempty subset of $\dom\mathcal{L}^Q.$

Our aim is to show that the extended generator can be computed by
means of a stochastic derivative.

\begin{definition}[Integration time]
Let $u$ be a measurable real function on $\iX$ and $\tau$ be a stopping
time. We say that $\tau$ is a $P$-integration time of $u$ if the
family of random variables $\{u(\Xb^\tau_t);t\in\ii\}$ is
uniformly $P$-integrable.
\end{definition}

\begin{definition}[Stochastic derivative of a Markov process]\label{def-04}
Let $P$ be a Markov process and $u$ be a measurable real function on $\iX$. We say
that $u$ admits a stochastic derivative  under $P$ at time
$t\in\ii$ if for $P_t$-almost all $x\in\XX$ there exists a
$P$-integration time $\sigma^x$ of $u$  such that $\sigma^x\ge t,$
$P\ae$ and for any $P$-integration time $\tau$ of $u$ satisfying
$\tau>\sigma^x,$ $P\ae$ the following limit
$$
   L^Pu(t,x):=
    \Limh E_P\left(\frac1h [u(\Xb^\tau_{t+h})-u(t,x)]
    \mid  X_t=x \right)
$$
exists and does not depend on $\tau.$
\\
If $u$ admits a stochastic derivative for $dtP_t(dx)$-almost all
$(t,x),$ we say that $u$ belongs to the domain $\dom L^P $ of
the stochastic derivative $L^P$ of the Markov process $P.$
\\
If the function $u$ does not depend on the time variable $t,$ we
denote
$$
    L^P_tu(x)=L^Pu(t,x).
$$
\end{definition}

This extension of Nelson's definition by means of integration times seems to be new. It is consistent since the supremum of two
integration times is still an integration time. Indeed, the
supremum of two stopping times is a stopping time and for all $t,$
$|u(\Xb^{\tau\vee\tau'}_t)|\le
|u(\Xb^{\tau}_t)|+|u(\Xb^{\tau'}_t)|.$
\\ 
As in Definition \ref{def-05}, we do not restrict the domain of the stochastic derivative to continuus functions, see Remark \ref{rem-02}-(d).
\\
Since $P$ is a Markov process, we have also
$$
   L^Pu(t,x)=
    \Limh E_P\left(\frac1h [u(\Xb^\tau_{t+h})-u(t,x)]
    \mid \tau>t, X_t=x \right).
$$
We denote $\Pb$ the product of the Lebesgue measure on $\ii$ by
the process $P:$ $\Pb(dtd\omega)=dtP(d\omega).$ In the sequel,
we shall be concerned with the function space
$L^p(\ii\times\OO,\Pb).$

\begin{lemma}\label{res-21}
For all $h>0,$ let
$k^h\ge0$ be a measurable convolution kernel such that $\supp
k^h\subset[-h,h]$ and $\int_{\mathbb{R}}k^h(s)\,ds=1.$
\\
Let $P$ be a bounded positive measure on $\OO$ (which may not be a probability measure) and  $v(t,\omega)$ be a function in $L^p(\ii\times\OO,\Pb)$ with
$1\le p<\infty.$ Define for all $h>0$ and $t\in\ii,$ $k^h *
v(t)=\int_{\mathbb{R}}k^h(t-s)v_s\,ds$ where $v$ is extended by
putting $v_s=0$ for all $s\not\in\ii.$
\\
Then, $k^h *v$ is in $L^p(\ii\times\OO,\Pb)$ and
    $
    \Limh k^h *v=v\  \textrm{in }L^p(\ii\times\OO,\Pb).
    $
\end{lemma}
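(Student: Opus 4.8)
The plan is to reduce the statement to the classical mollification theorem on $L^p(\RR)$ applied fibrewise in $\omega$, and then to patch the fibres together by a dominated convergence argument over $P$. Throughout I would fix a trajectory $\omega$ and work with the slice $g^\omega(t):=v(t,\omega)$, extended by zero outside $\ii$. Since $v\in L^p(\iO,\Pb)$, Tonelli's theorem shows that $g^\omega\in L^p(\RR)$ for $P$-almost every $\omega$, with $\IO \|g^\omega\|_{L^p(\RR)}^p\,P(d\omega)=\|v\|_{L^p(\Pb)}^p<\infty$. Joint measurability of $(t,\omega)\mapsto k^h*v(t,\omega)$ follows from Tonelli applied to the measurable map $(t,s,\omega)\mapsto k^h(t-s)v(s,\omega)$.

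First I would establish the contraction bound, which simultaneously yields the claimed integrability. Because $k^h\ge0$ and $\int k^h=1$, Jensen's inequality gives $|k^h*g^\omega(t)|^p\le \int_{\RR} k^h(t-s)|g^\omega(s)|^p\,ds$ for a.e.\ $t$; integrating in $t$ over $\RR$ and using Tonelli together with $\int_{\RR} k^h(t-s)\,dt=1$ produces $\|k^h*g^\omega\|_{L^p(\RR)}\le \|g^\omega\|_{L^p(\RR)}$ (this is Young's inequality with $\|k^h\|_{L^1(\RR)}=1$). Restricting the left-hand integral to $t\in\ii$ and then integrating in $\omega$ against $P$ yields $\|k^h*v\|_{L^p(\Pb)}\le \|v\|_{L^p(\Pb)}<\infty$, which proves $k^h*v\in L^p(\iO,\Pb)$ with a bound uniform in $h$.

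Next I would invoke the classical fact that mollification converges in $L^p(\RR)$ when $1\le p<\infty$. For each $\omega$ with $g^\omega\in L^p(\RR)$, writing $k^h*g^\omega-g^\omega=\int_{\RR} k^h(s)[g^\omega(\ccdot-s)-g^\omega]\,ds$ and applying Minkowski's integral inequality gives $\|k^h*g^\omega-g^\omega\|_{L^p(\RR)}\le \sup_{|s|\le h}\|g^\omega(\ccdot-s)-g^\omega\|_{L^p(\RR)}$, which tends to $0$ as $h\downarrow0$ by continuity of translation in $L^p(\RR)$. Setting $\phi_h(\omega):=\Iii|k^h*v(t,\omega)-v(t,\omega)|^p\,dt$ and noting that $\phi_h(\omega)$ is dominated by the full-line $L^p(\RR)$-norm, I obtain $\phi_h(\omega)\to0$ for $P$-almost every $\omega$.

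Finally I would pass from this fibrewise convergence to convergence in $L^p(\Pb)$ by dominated convergence over $\omega$. The contraction bound of the second paragraph gives $\Iii|k^h*v(t,\omega)|^p\,dt\le\Iii|v(t,\omega)|^p\,dt$, whence $\phi_h(\omega)\le 2^p\Iii|v(t,\omega)|^p\,dt=:2^p\psi(\omega)$ with $\psi\in L^1(P)$. Dominated convergence then yields $\IO\phi_h\,dP\to0$, that is $\Limh\|k^h*v-v\|_{L^p(\Pb)}=0$. The only delicate point is this interchange of the fibrewise limit with the $\omega$-integration, and the uniform domination furnished by the contraction step is precisely what legitimizes it; the boundary effects at $t=0,1$ caused by the zero-extension are harmless because every fibre estimate is carried out on the whole line.
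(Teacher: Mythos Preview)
Your proof is correct and takes a genuinely different route from the paper's. The paper equips $\OO$ with the Skorokhod topology, invokes the density of $C_c(\ii\times\OO)$ in $L^p(\Pb)$ (this is where $p<\infty$ enters), uses the same contraction bound you derive to reduce to a compactly supported continuous $v_n$, and then exploits uniform continuity of $v_n$ on the product space to make $\|k^h*v_n-v_n\|_{L^p(\Pb)}$ small. You instead work fibrewise: for each $\omega$ you apply the classical $L^p(\RR)$ mollification theorem via continuity of translation, and then pass to the limit in $\omega$ by dominated convergence, the dominating function being supplied by the same contraction bound. Your argument is arguably more elementary and more general, since it never needs a topology on $\OO$ and would go through verbatim with $\OO$ replaced by an arbitrary $\sigma$-finite measure space; the paper's argument, by contrast, needs $\OO$ Polish so that $C_c$ is dense. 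Both proofs hinge on the contraction $\|k^h*v\|_{L^p(\Pb)}\le\|v\|_{L^p(\Pb)}$, but you use it to build an $\omega$-integrable majorant whereas the paper uses it to transfer the approximation error from $v$ to $v_n$.
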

We see that $k^h (s)\, ds$ is a probability measure on $\RR$ which
converges narrowly to the Dirac measure $\delta_0$ as $h$ tends down to zero.

\begin{proof}
In this lemma, we endow as usual $\OO$ with the Skorokhod topology which turns it into a Polish space and has the interesting property that its Borel $\sigma$-field matches with the cylindrical $\sigma$-field.

We denote $L^p(\ii\times\OO,\Pb)=L^p(\Pb)$ and start the proof by
showing that $k^h *v\in L^p(\Pb).$ For $P$-almost all $\omega,$
$v(\cdot,\omega)\in L^p(\ii)$ so that $k^h*v(\cdot,\omega)$ is
also in $L^p(\ii)$ with $\|k^h*v(\cdot,\omega)\|_{L^p(\ii)}\le
\|v(\cdot,\omega)\|_{L^p(\ii)}.$ It remains to integrate with
respect to  $P(d\omega)$ to obtain
\begin{equation}\label{eq-42}
    \|k^h*v\|_{L^p(\Pb)}\le \|v\|_{L^p(\Pb)}<\infty.
\end{equation}
Now, we prove the convergence. As $p$ is finite, the space
$C_c(\ii\times\OO)$ of all  continuous functions with a compact
support in $\ii\times\OO$ is dense in $L^p(\Pb).$ We approximate
$v$ in $L^p(\Pb)$ by a sequence $\seq vn$ in $C_c(\ii\times\OO).$
For all $h$ and $n$
\begin{eqnarray*}
  \|k^h *v-v\|_{L^p(\Pb)}
  &\le& \|k^h *(v-v_n)\|_{L^p(\Pb)} +\|k^h *v_n-v_n\|_{L^p(\Pb)}
    +\|v_n-v\|_{L^p(\Pb)}  \\
   &\le& \|k^h *v_n-v_n\|_{L^p(\Pb)}+2 \|v-v_n\|_{L^p(\Pb)}
\end{eqnarray*}
where we used \eqref{eq-42}.
\\
Take an arbitrary small $\eta>0$ and choose $n$ large enough for
$\|v-v_n\|_{L^p(\Pb)}\le\eta$ to hold. Then,
\begin{equation}\label{eq-01}
     \|k^h *v-v\|_{L^p(\Pb)}\le
     \|k^h *v_n-v_n\|_{L^p(\Pb)}+2\eta.
\end{equation}
Fix this $n.$ Since $v_n$ is in $C_c(\ii\times\OO),$ it is a
uniformly continuous function. Therefore, for all $\eta>0,$ there
exists $h(\eta)>0$ such that for any $t,t',\omega,\omega'$
satisfying $|t-t'|+d_\OO(\omega,\omega')\le h(\eta),$ we have
$|v_n(t',\omega')-v_n(t,\omega)|\le\eta,$ where $d_\OO$ is the
Skorokhod metric on $\OO.$ In particular, with $\omega=\omega',$
we see that
\begin{equation*}
    |t'-t|\le h(\eta) \Rightarrow
    \sup_{\omega\in\OO}|v_n(t',\omega)-v_n(t,\omega)|\le\eta.
\end{equation*}
Because of the property: $\supp k^h\subset[-h,h],$ we deduce from
this that for any $\omega\in\OO,$
$|k^h*v_n(t)-v_n(t)|\le\int_{\mathbb{R}}|v_n(t-s)-v_n(t)|k^h(s)\,ds\le\eta$
as soon as $h\le h(\eta)/2.$ Consequently $ \|k^h
*v_n-v_n\|_{L^p(\Pb)}\le P(\OO)\eta.$ Finally, with \eqref{eq-01}
this leads us to $\|k^h *v-v\|_{L^p(\Pb)}\le (2+P(\OO))\eta.$
Since $\eta$ is arbitrary, this shows that
    $
    \lim_{h\rightarrow0}\|k^h
    *v-v\|_{L^p(\Pb)}=0,
    $
which is the desired result
\end{proof}

\begin{proposition}\label{res-01} Let $P$ be a Markov process and $u$ be a function in the domain $\dom\LP $ of the
extended generator $\LP $ of $P.$ We suppose in addition
that there exists  $1\le p<\infty$ such that $E_P\II01
|\LP u(t, X_t)|^p\,dt<\infty.$ Then,
\begin{equation}\label{eq-43}
    \Limh E_P\II0{1-h}\left|
    \frac 1hE_P[u(t+h,X_{t+h})-u(t,X_t)\mid X_t]-\LP u(t,X_t)\right|^p\,dt=0.
\end{equation}
\end{proposition}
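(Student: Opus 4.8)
The plan is to recognise the difference quotient as a convolution average of $\LP u(t,X_t)$ and then to quote the convolution Lemma \ref{res-21}. Write $v(t,\omega):=\LP u(t,\omega_t)$ and $v_t:=v(t,X_t)$; the standing hypothesis is exactly that $v\in L^p(\Pb)$. Since $u\in\dom\LP$, Definition \ref{def-05} furnishes the semimartingale decomposition
\[
u(t,X_t)=u(0,X_0)+\II0t v_s\,ds+M^u_t,\qquad 0\le t\le1,
\]
with $M^u$ a local $P$-martingale. Because $v\in L^p(\Pb)\subset L^1(\Pb)$, the bounded-variation increment $\II t{t+h} v_s\,ds$ is $P$-integrable. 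I will argue (see the obstacle below) that conditioning on $X_t$ kills the martingale increment, so that
\[
\frac1h E_P\big[u(t+h,X_{t+h})-u(t,X_t)\mid X_t\big]
=\frac1h E_P\Big[\II t{t+h}v_s\,ds\,\Big|\,X_t\Big]
=E_P\big[k^h\!* v(t)\mid X_t\big],
\]
where $k^h(r):=\tfrac1h\1_{[-h,0]}(r)$. This $k^h$ is an admissible kernel for Lemma \ref{res-21}: it is nonnegative, integrates to $1$, and $\supp k^h=[-h,0]\subset[-h,h]$. Moreover, for $t\in[0,1-h]$ the average $\tfrac1h\II t{t+h}v_s\,ds$ only reads values $v_s$ with $s\in\ii$, so no zero-extension is triggered on the range of integration.

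Granting the displayed identity, the endgame is short. The function $v_t$ is $\sigma(X_t)$-measurable, hence $E_P[v_t\mid X_t]=v_t$, whence
\[
\frac1h E_P\big[u(t+h,X_{t+h})-u(t,X_t)\mid X_t\big]-v_t
=E_P\big[k^h\!* v(t)-v_t\mid X_t\big].
\]
Applying the conditional Jensen inequality for the convex map $r\mapsto|r|^p$ ($p\ge1$), then integrating in $t$ over $[0,1-h]$, taking $E_P$, and using Fubini together with the tower property $E_P E_P[\,\cdot\mid X_t]=E_P$, I obtain
\[
E_P\II0{1-h}\Big|\tfrac1h E_P[u(t+h,X_{t+h})-u(t,X_t)\mid X_t]-v_t\Big|^p\,dt
\le\int_0^1 E_P\big|k^h\!* v(t)-v_t\big|^p\,dt
=\|k^h\!* v- v\|_{L^p(\Pb)}^p,
\]
where I have enlarged the range $[0,1-h]$ to $\ii$ in the upper bound. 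By Lemma \ref{res-21} the right-hand side tends to $0$ as $h\downarrow0$, which is precisely \eqref{eq-43}.

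The \emph{main obstacle} is the step where the local-martingale increment is dropped under the conditional expectation, namely the claim $E_P[M^u_{t+h}-M^u_t\mid X_t]=0$. For a genuine martingale this is immediate: since $\sigma(X_t)\subset\sigma(X_{[0,t]})$, the tower property reduces it to the martingale identity $E_P[M^u_{t+h}-M^u_t\mid\sigma(X_{[0,t]})]=0$. But $M^u$ is only a \emph{local} martingale, so I must pass from its localizing sequence $\seq\tau k$ to the limit. Along the stopped processes one has $E_P[M^u_{(t+h)\wedge\tau_k}-M^u_{t\wedge\tau_k}\mid\sigma(X_{[0,t]})]=0$; the point is to send $k\to\infty$ inside the conditional expectation, for which the integrability carried by $v\in L^1(\Pb)$—controlling the bounded-variation part—together with the integrability of $u(\cdot,X_\cdot)$ presupposed by the well-definedness of the difference quotient is the natural lever. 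This is the one place where the finite-$L^p$ hypothesis does more than feed the final convolution estimate, and it explains why here the naive difference quotient converges without recourse to the integration-time device of Definition \ref{def-04}.
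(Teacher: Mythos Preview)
Your argument follows the paper's proof essentially line for line: the same kernel $k^h=\tfrac1h\1_{[-h,0]}$, the same identification of the difference quotient with $E_P[k^h*v(t)\mid X_t]$, the same conditional Jensen inequality, and the same appeal to Lemma~\ref{res-21}. The paper writes the chain
\[
\tfrac1h E_P[u(t+h,X_{t+h})-u(t,X_t)\mid X_t]
=\tfrac1h E_P[u(t+h,X_{t+h})-u(t,X_t)\mid X_{[0,t]}]
=E_P[k^h*v(t)\mid X_{[0,t]}]
=E_P[k^h*v(t)\mid X_t]
\]
without further comment, tacitly treating $M^u$ as a genuine martingale at the middle equality.

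You are right to isolate this as the delicate point, and your instinct that the $L^p$ hypothesis should help is sound---but the justification you sketch is not sufficient as stated. Integrability of the increment $M^u_{t+h}-M^u_t$ (which you get from $v\in L^1(\Pb)$ together with integrability of $u(\cdot,X_\cdot)$) does \emph{not} by itself force $E_P[M^u_{t+h}-M^u_t\mid X_{[0,t]}]=0$: an $L^1$ local martingale can be a strict local martingale. The paper's proof of Proposition~\ref{res-01} simply does not address this; the careful argument with localizing stopping times appears only in the subsequent proof of Theorem~\ref{res-02}, where one works with the stopped processes $M^{\tau_k}$ throughout and the indicator $\1_{\{t\le\tau_k\}}$ is carried along. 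So your proof matches the paper's, and the obstacle you flag is a genuine soft spot that the paper's own presentation of this proposition shares and only resolves downstream.
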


\begin{proof} We denote
$v_t=\LP u(t,X_t).$ Choosing the specific convolution
kernel $k^h=\frac 1h \1_{[-h,0]},$ and relying on the very
definition of the extended generator, we obtain
\begin{multline*}
\frac 1hE_P[u(t+h,X_{t+h})-u(t,X_t)\mid X_t]=  \frac 1hE_P[u(t+h,X_{t+h})-u(t,X_t)\mid \XXX0t]\\=E_P[k^h*v(t)\mid \XXX0t]=E_P[k^h*v(t)\mid X_t].
\end{multline*}
On the other hand, by Jensen's inequality and Fubini's theorem
\begin{eqnarray*}
  E_P\II01\Big|E_P[k^h*v(t)\mid X_t]-v_t\Big|^p\,dt
  &=&  E_P\II01\Big|E_P[k^h*v(t)-v_t\mid X_t]\Big|^p\,dt \\
  &\le&  E_P\II01|k^h*v(t)-v_t|^p\,dt.
\end{eqnarray*}
Our hypothesis $v\in L^p(\ii \times\OO,\overline{P})$ is precisely
the assumption of previous Lemma \ref{res-21} which insures that
$\Limh E_P\II01|k^h*v(t)-v_t|^p\,dt=0.$ Gathering these
considerations, we obtain \eqref{eq-43}.
\end{proof}

A variant of this proposition already appears in \cite{Foe86}. But it seems to the author that its proof is incomplete and that it is difficult to avoid a convolution argument such as Lemma \ref{res-21}.

\begin{theorem}\label{res-02}
Let $P$ be a Markov process and $u$ be a function in the domain
$\dom\LP $ of the extended generator $\LP $ of
$P.$ Then, $u$ belongs to $\dom L^P $ and
    $$
      \LP u
   =L^Pu,\quad dtP_t(dx)\ae
    $$
\end{theorem}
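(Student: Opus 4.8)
The plan is to turn the stochastic-derivative limit into a differentiation of the time-integral of $v:=\LP u$. Write $v_s=\LP u(s,X_s)$; by hypothesis $M^u_t=u(t,X_t)-u(0,X_0)-\II0t v_s\,ds$ is a local $P$-martingale and $\II01|v_s|\,ds<\infty$, $P\as$ First I would exhaust $\ii$ by integration times $\tau$ of $u$ that are large enough that, at $dtP_t(dx)\ae$ $(t,x)$, one has $\tau>t$ $P\as$; since the supremum of two integration times is one, I may also arrange $E_P\II0\tau|v_s|\,ds<\infty$, which makes $M^u_{\cdot\wedge\tau}$ a genuine uniformly integrable $P$-martingale. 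On $\{\tau>t\}$ we have $u(\Xb^\tau_t)=u(t,X_t)$, and the martingale property together with the Markov property yield
\begin{equation*}
E_P\Big[\tfrac1h\big(u(\Xb^\tau_{t+h})-u(t,x)\big)\mid X_t=x\Big]
=E_P\Big[\tfrac1h\II t{(t+h)\wedge\tau}v_s\,ds\mid X_t=x\Big].
\end{equation*}
Thus the whole matter reduces to showing that this conditional time-average of $v$ over $[t,t+h]$ converges to $v(t,x)$ as $h\downarrow0$, for $dtP_t(dx)\ae$ $(t,x)$, and that the limit is insensitive to the choice of $\tau$.

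There are two complementary facts to combine. On the one hand, Proposition \ref{res-01}, which rests on the convolution Lemma \ref{res-21} with the kernel $k^h=\tfrac1h\1_{[-h,0]}$, identifies the left-hand side above with $E_P[k^h*v(t)\mid X_t]$ and shows --- once the localization has secured $E_P\II01|v_s|^p\,ds<\infty$ --- that it converges to $v_t$ in $L^p(\Pb)$. This pins down $v$ as the only possible almost-everywhere limit, since $L^p$ convergence forces a.e.\ convergence along some sequence $h_n\downarrow0$. On the other hand, for $P\as$ $\omega$ the path $s\mapsto v_s$ lies in $L^1(\ii)$, so Lebesgue's differentiation theorem gives the \emph{full} limit $\tfrac1h\II t{t+h}v_s\,ds\to v_t$ for Lebesgue-a.e.\ $t$, hence, by Fubini, for $\Pb\ae$ $(t,\omega)$; on $\{\tau>t\}$ the stopped and unstopped windows agree for small $h$, so the integrand inside the conditional expectation already converges $\Pb\ae$ to $v_t$.

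The crux, and the step I expect to be hardest, is to pass this last, pathwise, convergence through the conditioning $E_P[\,\cdot\mid X_t]$ so as to obtain the \emph{full} limit $h\downarrow0$ required by Definition \ref{def-04}, and not merely the convergence along a subsequence delivered by the $L^p$ statement. I would control this commutation by a conditional dominated convergence argument: the quantities $\tfrac1h\II t{(t+h)\wedge\tau}|v_s|\,ds$ are dominated, uniformly in $h$, by the one-sided Hardy--Littlewood maximal function of $s\mapsto|v_s|$, and the integration-time localization is exactly what renders this maximal majorant $P$-integrable for $dt\ae$ $t$ --- either through a gain of integrability $p>1$, or directly through the weak maximal inequality underlying the classical proof of the differentiation theorem. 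Granting the domination, conditional dominated convergence gives $E_P[\tfrac1h\II t{(t+h)\wedge\tau}v_s\,ds\mid X_t=x]\to E_P[v_t\mid X_t=x]=v(t,x)$ for $dtP_t(dx)\ae$ $(t,x)$, which is compatible with, and sharpens to a full limit, the $L^p$ convergence of Proposition \ref{res-01}.

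Finally, independence of the limit from $\tau$ is immediate: two integration times $\tau,\tau'>\sigma^x\ge t$ satisfy $\tau\wedge\tau'>t$ $P\as$, so near $t$ their windows coincide and the two conditioned averages share the same limit $v(t,x)$. This simultaneously shows that $u\in\dom L^P$ and that $L^Pu=\LP u$, $dtP_t(dx)\ae$, which is the assertion of the theorem.
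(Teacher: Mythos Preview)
Your overall route---localize by integration times, rewrite the difference quotient as a conditional time-average of $v$, invoke Lemma~\ref{res-21} for $L^1(\Pb)$ convergence---matches the paper. The divergence is at the step you correctly flag as the crux, and there the domination by the Hardy--Littlewood maximal function does not go through. Stopping at $\inf\{t:\II0t|v_s|\,ds\ge k\}$ bounds the $L^1([0,1])$-norm of each path of $v^\tau$, but gives no control on $\II01|v_s|^p\,ds$ for any $p>1$; and with only pathwise $L^1$ control the maximal function is merely weak-$L^1$, which is useless for (conditional) dominated convergence. A concrete obstruction: let $P$ be the constant process $X_t\equiv X_0$ with $X_0$ uniform on $[0,1]$, and $v(s,x)=\big[(s-x)\log^2\tfrac1{s-x}\big]^{-1}\1_{\{0<s-x<1/2\}}$; then $u(t,x):=\II0t v(s,x)\,ds$ lies in $\dom\LP$ with $\LP u=v$, and $\II01|v(s,X_0)|\,ds\le 1/\log 2$ uniformly, yet one checks $E_P\big[\sup_{h>0}\tfrac1h\II t{t+h}|v(s,X_0)|\,ds\big]=\infty$ for every $t\in(0,1)$. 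So no $P$-integrable majorant exists, and neither of your two proposed fixes rescues the conditional DCT.

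The paper bypasses this with a lighter device. After the $L^1(\Pb)$ convergence, Fatou's lemma yields
\[
\liminf_{h\downarrow0}\Big|\tfrac1hE_P\big[u(\Xb^\tau_{t+h})-u(\Xb^\tau_t)\mid\XXX0t\big]-v_t\Big|=0
\quad\textrm{for }\Pb\textrm{-a.e. }(t,\omega).
\]
Separately, by conditional Fubini, $h\mapsto E_P[u(\Xb^\tau_{t+h})\mid\XXX0t](\omega)=u(\Xb^\tau_t)(\omega)+\II0h E_P[v^\tau_{t+r}\mid\XXX0t](\omega)\,dr$ is absolutely continuous, so the difference quotient is the derivative of an absolutely continuous function and the \emph{full} limit $\Limh$ exists for Lebesgue-a.e.\ $t$. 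Combining the two facts, the $\liminf$ upgrades to a genuine limit equal to $v_t$. The point is that existence of the limit and its identification are handled separately; no maximal inequality or domination is needed.
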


\begin{proof}
By the definition of the extended generator, there exists a
localizing sequence $\seq{\tau}k$ of stopping times, i.e.\ such
that $\Lim k \tau_k=\infty,$ $P\ae$ and for all $k\ge1,$ the
stopped process $M^{\tau_k}$ where
$$
    M_t=u(t,X_t)-\II0{t} \LP u(s,X_s)\,ds,
$$
is a uniformly integrable martingale.  By considering the sequence of stopping
times  $\inf\{t\in\ii; \II0t |\LP u(s,X_s)|\,ds\ge
k\}\in\ii\cup\{\infty\}$ indexed by $k\ge1,$ it is easy to show that
$\seq{\tau}k$ can also be chosen such that for each $k,$ $\tau_k$
is also an integration time of $u.$

Let us consider a fixed integration time $\tau$ of $u$ such that
$M^\tau$ is a uniformly integrable martingale. Denoting
$v^\tau(t)=\1_{\{t\le\tau\}}\LP u(t,X_t)$ and choosing
$k^h=\frac1h\1_{[-h,0]}$ as in the proof of Proposition
\ref{res-01},  we see that $\frac
1h[u(\Xb^\tau_{t+h})-u(\Xb^\tau_t)]-k^h*v^\tau(t)$ is a
martingale. It follows that
\begin{equation*}
  \frac 1hE_P[u(\Xb^\tau_{t+h})-u(t,X^\tau_t)\mid \XXX0t]
    =E_P[k^h*v^\tau(t)\mid  \XXX0t] 
   = \1_{\{t\le\tau\}}E_P[k^h*v(t)\mid X_t].
\end{equation*}
Remark for future use that this implies that
\begin{equation}\label{eq-44}
     \frac 1hE_P[u(\Xb^\tau_{t+h})-u(t, X^\tau_t) \mid \XXX0t]=\1_{\{t\le\tau\}}
      \frac 1hE_P[u(\Xb^\tau_{t+h})-u(t, X^\tau_t)\mid 
      X^\tau_t].
\end{equation}
Then, as for \eqref{eq-43} with $p=1,$ we obtain
\begin{equation*}
    \Limh  E_P\int_{[0,\tau\wedge (1-h)]}\left|
    \frac 1hE_P[u(\Xb^\tau_{t+h})-u(t, X^\tau_t)\mid  \XXX0t]-\LP u(t,
    X_t)\right|\,dt=0
\end{equation*}
and with Fatou's lemma
\begin{equation*}
      E_P\II0{1-h}\liminf_{h\downarrow0}\1_{\{t\le\tau\}}\left|
    \frac 1hE_P[u(\Xb^\tau_{t+h})-u(t, X^\tau_t)\mid  \XXX0t]-\LP u(t,
    X_t)\right|\,dt=0.
\end{equation*}
But, since $u$ is in $\dom\LP $, $\Limh \frac
1hE_P[u(\Xb^\tau_{t+h})-u(t, X^\tau_t)\mid \XXX0t]$ appears as
the computation of the derivative of an absolutely continuous
function. Therefore, this limit exists for Lebesgue-almost all $t$
and the $\liminf_{h\downarrow0}$ arising from the application of
Fatou's lemma is a  genuine limit\footnote{The absolute continuity
plays a crucial role. Note that it is also of primary importance in the
definition of the extended generator.}. Hence,
\begin{equation*}
      E_P\II0{1-h}\1_{\{t\le\tau\}}\Limh\left|
    \frac 1hE_P[u(\Xb^\tau_{t+h})-u(t, {X}_t)\mid \XXX0t]-\LP u(t,
    X_t)\right|\,dt=0
\end{equation*}
and with \eqref{eq-44} this shows us that for
$\overline{P}$-almost all $(t,\omega)$ we have
\begin{equation*}
   \1_{\{\tau(\omega)\ge t\}} \Limh\frac 1{h}
    E_P\left[u(\Xb^\tau_{t+h})-u(t,X_t(\omega))\mid  X^\tau_t\right](\omega)
    = \1_{\{\tau(\omega)\ge t\}} \LP u(t,X_t(\omega)).
\end{equation*}
As the left-hand side vanishes when $\tau(\omega)=t,$ we obtain
\begin{equation*}
   \1_{\{\tau> t\}} \Limh\frac 1{h}
    E_P\left[u(\Xb^\tau_{t+h})-u(t,X_t)\mid \tau>t, X_t\right]
    = \1_{\{\tau> t\}} \LP u(t,X_t).
\end{equation*}
This results holds true for any integration time $\tau$ of $u$
such that  $M^\tau$ is a  uniformly integrable martingale.

By assumption, for $\overline{P}$-almost all $(t,\omega)$ there
exists $k(t,\omega)$ large enough for the localizing time
$\tau_{k(t,\omega)}$ to satisfy $\tau_{k(t,\omega)}(\omega)\ge t.$
Choosing $\sigma^{X_t(\omega)}=\tau_{k(t,\omega)},$ we obtain
for $dtP_{t}(dx)$-almost all $(t,x)$ an integration time
$\sigma^x\ge t$ such that any integration time $\tau>\sigma^x$
satisfies
\begin{equation*}
    \Limh\frac 1{h}
    E_P\left[u(\Xb^\tau_{t+h})-u(t,x)\mid  X_t=x\right](\omega)
    = \LP u(t,x).
\end{equation*}
This completes the proof of the
theorem.
\end{proof}

Let us investigate a partial converse of Theorem \ref{res-02}.

\begin{proposition}\label{res-03}
Let $P$ be a Markov process, $u$ and  $v$ be  measurable real functions on $\iX$
which satisfy the following requirements. The function $v$
verifies $\II01 \left|v(t,X_t)\right|\,dt<\infty,$ $P\as$  and
there exists a sequence $\Seq\tau k$ of integration times of $u$
such that
$\Lim k\tau_k=\infty,$ $P\as$ and for each $k\ge1,$
\begin{equation}\label{eq-03}
    \Limh E_P\II0{1-h}\left|\frac1h E_P[u(\Xb^{\tau_k}_{t+h})-u(\Xb^{\tau_k}_{t})\mid X_t]-\1_{\{t\le\tau_k\}}v(t,
    X_t)\right|\,dt =0
\end{equation}
 Then, $u$ belongs to $\dom\LP $ and $\dom L^P $ and
    $$
      \LP u=L^Pu
   =v,\quad dtP_t(dx)\ae
    $$
\end{proposition}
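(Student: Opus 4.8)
The plan is to reduce everything to the extended generator and then quote Theorem \ref{res-02}. Concretely, I will show that
$M^u_t:=u(t,X_t)-u(0,X_0)-\II0t v(s,X_s)\,ds$ is a local $P$-martingale with localizing sequence (a refinement of) $\seq\tau k$; this is exactly the statement $u\in\dom\LP$ with $\LP u=v$. Once that is proved, Theorem \ref{res-02} immediately furnishes $u\in\dom L^P$ together with $L^Pu=\LP u=v$, $dtP_t(dx)\ae$, which is the remaining half of the conclusion. Since $\II01|v(t,X_t)|\,dt<\infty$, $P\as$, I may assume, after intersecting each $\tau_k$ with $\inf\{t;\II0t|v(s,X_s)|\,ds\ge k\}$, that the compensator part is bounded on $[0,\tau_k]$; this keeps $\tau_k$ an integration time of $u$ (a subfamily of a uniformly $P$-integrable family is uniformly $P$-integrable, and $u(\Xb^{\tau_k\wedge\rho}_t)=u(\Xb^{\tau_k}_{t\wedge\rho})$), and keeps $\Lim k\tau_k=\infty$, so that each $M^{u,\tau_k}$ is genuinely integrable. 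It then suffices to prove that each $M^{u,\tau_k}$ is a $P$-martingale.

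Fix $k$ and write $\tilde v_s:=\1_{\{s\le\tau_k\}}v(s,X_s)$, so that $M^{u,\tau_k}_t=u(\Xb^{\tau_k}_t)-u(0,X_0)-\II0t\tilde v_s\,ds$. To test the martingale property, fix $0\le r\le t<1$ and a bounded $\XXX0r$-measurable $G$. The core is an averaging argument dual to the one in Theorem \ref{res-02}. On one hand, writing $f(s):=E_P[u(\Xb^{\tau_k}_s)G]$ and using the elementary identity $\frac1h\II rt(f(s+h)-f(s))\,ds=\frac1h\int_t^{t+h}f-\frac1h\int_r^{r+h}f$, the right-continuity of $X$ together with the uniform integrability furnished by the integration time $\tau_k$ gives, for Lebesgue-almost every pair $(r,t)$, $\Limh\frac1h\II rt E_P[(u(\Xb^{\tau_k}_{s+h})-u(\Xb^{\tau_k}_s))G]\,ds=E_P[(u(\Xb^{\tau_k}_t)-u(\Xb^{\tau_k}_r))G]$. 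On the other hand, as $G$ is $\XXX0r$-measurable hence $\XXX0s$-measurable for $s\ge r$, the tower property recasts the same expression as $\II rt E_P\big[G\,\tfrac1h E_P[u(\Xb^{\tau_k}_{s+h})-u(\Xb^{\tau_k}_s)\mid\XXX0s]\big]\,ds$.

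The crux is to recognize the inner conditional expectation. By the same Markov-property computation that yields \eqref{eq-44} (re-derived here directly, since we do not yet know $u\in\dom\LP$), and using that the stopped increment vanishes on $\{\tau_k\le s\}$, I replace conditioning on the whole past $\XXX0s$ by conditioning on the single position $X_s$, namely $E_P[u(\Xb^{\tau_k}_{s+h})-u(\Xb^{\tau_k}_s)\mid\XXX0s]=\1_{\{s<\tau_k\}}E_P[u(\Xb^{\tau_k}_{s+h})-u(\Xb^{\tau_k}_s)\mid X_s]$, whose right-hand side is exactly the quantity appearing in the hypothesis \eqref{eq-03} (the indicator matching $\1_{\{s\le\tau_k\}}$ up to a $dt$-null set). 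The $L^1(\Pb)$-convergence in \eqref{eq-03} then lets me pass to the limit $h\downarrow0$ inside $\II rt E_P[G\,\ccdot\,]\,ds$, the error being bounded by $\|G\|_\infty$ times the norm in \eqref{eq-03}, which gives $\II rt E_P[G\,\tilde v_s]\,ds=E_P[G\II rt\tilde v_s\,ds]$. Equating the two evaluations of the averaged difference quotient yields $E_P[(u(\Xb^{\tau_k}_t)-u(\Xb^{\tau_k}_r))G]=E_P[G\II rt\tilde v_s\,ds]$ for every such $G$, that is, $M^{u,\tau_k}$ is a $P$-martingale; the passage from the almost-every-$(r,t)$ identity to all times is absorbed by choosing the \cadlag\ version as in Remark \ref{rem-02}-(c).

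Letting $k\to\infty$ and invoking $\Lim k\tau_k=\infty$, $P\as$, shows $M^u$ is a local $P$-martingale, so $u\in\dom\LP$ with $\LP u=v$, and Theorem \ref{res-02} closes the argument. The step I expect to be the real obstacle is the Markov reduction of the inner conditional expectation: this is where the interplay between the integration times $\seq\tau k$ and the Markov property of $P$ is genuinely needed, for a general stopping time the residual law given $\XXX0s$ need not be a function of $X_s$ alone, and the hypothesis \eqref{eq-03} is phrased precisely so that conditioning on $X_s$ is what one controls. A secondary, more routine nuisance is the upgrade from the almost-every-$(r,t)$ martingale identity to the bona fide martingale property via \cadlag\ regularization, together with the integrability bookkeeping of the refined localizing sequence.
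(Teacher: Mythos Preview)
Your overall strategy coincides with the paper's: refine $\tau_k$ by intersecting with $\sigma_k:=\inf\{t:\II0t|v(s,X_s)|\,ds\ge k\}$, prove each stopped process $M^{u,\tau_k}$ is a $P$-martingale, and then invoke Theorem~\ref{res-02}. Your telescoping identity for $f(s)=E_P[u(\Xb^{\tau_k}_s)G]$ is essentially the content of the paper's preliminary analytic ``Claim'' (an $L^1$ difference-quotient condition forces absolute continuity), so the two analytic cores are the same idea in different clothing.

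The real divergence is exactly where you place your flag. Your identity
$E_P[u(\Xb^{\tau_k}_{s+h})-u(\Xb^{\tau_k}_s)\mid\XXX0s]=\1_{\{s<\tau_k\}}E_P[\,\cdot\,\mid X_s]$
is \emph{not} available for a general integration time: on $\{\tau_k>s\}$ the residual law of $\tau_k$ can still depend on $\XXX0s$ beyond $X_s$, and you cannot ``re-derive \eqref{eq-44} directly'' because in Theorem~\ref{res-02} that identity was obtained \emph{from} the extended-generator decomposition, which is precisely what you are trying to establish. The paper avoids needing this pointwise identity by conditioning on the single variable $X_r$ (not on $\XXX0r$) from the outset, so that the inner term matches the $X_t$-conditioning of hypothesis~\eqref{eq-03}; it then combines the bound with Fatou's lemma, extracts a subsequence $h_n\downarrow0$ to obtain a $P$-a.s.\ statement, and applies the analytic Claim to $a(t)=E_P[u_t\mid X_r]$, $b(t)=E_P[v_t\mid X_r]$. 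Only at the very last line does the paper pass from $E_P[\cdot\mid X_r]=0$ to $E_P[\cdot\mid\XXX0r]=0$; this step is terse there too, but the point is that the paper's route never requires your identity with the stopping time inside. A secondary point: your limit $\frac1h\int_t^{t+h}f\to f(t)$ presumes right-continuity of $s\mapsto E_P[u(\Xb^{\tau_k}_s)G]$, which is not automatic for merely measurable $u$; the paper makes this an explicit hypothesis of its Claim.
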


Note that if $P$ admits a fixed time of discontinuity, there might be many continuous  functions $u$ which do not verify \eqref{eq-03}.

\begin{proof}
The proof relies on the subsequent easy analytic result.

\noindent \textsf{Claim}.\ \textit{Let $a,b$ be two measurable
functions on $\ii$ such that $a$ is right continuous, $b$ is
Lebesgue-integrable and
    $
    \Limh \int_{[0,1-h]}\left|\frac1h\{a(t+h)-a(t)\}-b(t)\right|\,dt=0.
    $
Then, $a$ is absolutely continuous and its distributional
derivative is $\dot a=b.$}
\\
To see this, remark first that $t\mapsto\1_{\{0\le t\le
1-h\}}\frac1h\{a(t+h)-a(t)\}$ is integrable for any 
$0<h\le 1.$ Take any $0\le r\le s<1.$ On one hand, we have
    $\Limh \II rs  \frac1h\{a(t+h)-a(t)\}\,dt=\II rs b(t)\,dt$
and on the other one:
    $\II rs  \frac1h\{a(t+h)-a(t)\}\,dt=\frac1h \II r{r+h}a(t)\,dt -\frac1h \II s{s+h}a(t)\,dt,$
so that with the assumed right continuity of $a$ we have
     $\Limh \II rs  \frac1h\{a(t+h)-a(t)\}\,dt=a(s)-a(r).$
Therefore $a(s)-a(r)=\II rs b(t)\,dt$ which is the claimed
property.

Let us fix $\tau^k$ as in the assumption of the proposition. We
write $E=E_P,$ $u_t=u(\Xb^{\tau_k}_t)$ and
$v_t=\1_{\{t\le\tau_k\}}v(\Xb^{\tau_k}_t)$ to simplify the notation.
Define the family of stopping times $\sigma_k:=\inf\{s\in\ii;\II0s
|v(t,X_t)|\,dt\ge k\}$ where $k$ describes the integers. By
considering the stopping times $\sigma_k\wedge\tau_k,$ we can
assume without loss of generality that $$v\in L^1(\Pb).$$ Fix $0\le
r<1.$ We have
\begin{multline*}
    \left|E\left[\II r{1-h}\left(\frac1h\{u_{t+h}-u_t \}-v_t\right)\,dt\mid X_r\right]\right| \\
   \le  E\left[\II r{1-h}E\left(\big|\frac1h\{u_{t+h}-u_t \}-v_t\big|\mid X_t\right)\,dt\mid
   X_r\right]
\end{multline*}
With \eqref{eq-03} and Fatou's lemma, we obtain
\begin{multline*}
  E\left(\liminf_{h\downarrow0} \left|E\left[\II r{1-h}\left(\frac1h\{u_{t+h}-u_t \}-v_t\right)\,dt\mid X_r\right]\right|\right) \\
   \le \Limh  E \II r{1-h}E\left(\big|\frac1h\{u_{t+h}-u_t \}-v_t\big|\mid X_t\right)\,dt=0.
\end{multline*}
Hence, there exists a sequence $\seq hn$ of positive numbers such
that $\Lim n h_n=0$ and
\begin{equation*}
    \Lim n \II r{1-h_n}E\left[\left(\frac1{h_n}\{u_{t+h_n}-u_t \}-v_t\right)\mid
   X_r\right]\,dt=0,\quad P\ae
\end{equation*}
It remains to apply the result of the above claim to
    $a(t)=E\left[u_t\mid X_r\right]$ and
    $b(t)=E\left[v_t\mid X_r\right]$
to see that for all $0\le r\le s<1,$
    $E\left[u_s-u_r-\II rs v_t\,dt\mid\XXX0r\right]=0.$
This proves that $M^{\tau_k}$ is a $P$-martingale where
$$M_s:=u(s,X_s)-u(0,X_0)-\II0s v(t,X_t)\,dt.$$
With the assumptions that $\Lim k\tau_k=\infty,$ $P\as$,
$\1_{[0,\tau_k]}v\in L^1(\Pb)$  and the fact that 
 $\{u(\Xb^{\tau_k}_t);t\in\ii\}$ is uniformly $P$-integrable by the very definition of the integration time $\tau_k$,
we conclude  that $M$ is a local $P$-martingale. Therefore, $u$
belongs to $\dom\LP$ and $\LP u=v.$ And we
also have $u\in\dom L^P$ and  $L^Pu=\LP u$ by Theorem
\ref{res-02}.
\end{proof}

\section{Feynman-Kac processes}\label{sec-FK}

Let $R$ be a probability measure on $\OO$ which is a stationary Markov process with the invariant
\emph{probability}  measure
\begin{equation}\label{eq-06}
     m:=R_t\in\PX,\quad\forall t\in\ii.
\end{equation}
We also consider a lower bounded potential $V,$ i.e.\ a measurable function
$V:\iX\to \RR$ such that
\begin{equation}\label{eq-04}
    \inf_{\iX}V\ge -\lo
\end{equation}
with $0\le \lo<\infty.$ Let $\gg$ be a  nonnegative $m$-integrable
function on $\XX.$ 
In this section we look at the real valued process
\begin{equation}\label{eq-11}
    G_t:=E_R\left[\exp\left(-\II t1 V_s(X_s)\,ds\right)\gg(X_1)\mid
    \XXX0t\right]=:g_t(X_t),
    \quad t\in\ii ,\quad \gg\ge0
\end{equation}
which we call a Feynman-Kac process. Last equality, where
$g_t:\XX\to[0,\infty)$ is a measurable function, is a consequence
of the Markov property of $R$. 

\subsection*{Orlicz spaces}
The mere integrability of $\gg$ is sufficient for defining $G,$ but it will not be enough in general for our purpose. We are going to assume that $\gg$ is in some Orlicz space 
$$
\Lg:=\left\{u:\XX\to\RR; \textrm{ measurable, } \IX \gamma(a_o|u|)\,dm<\infty,\textrm{ for some } a_o>0 \right\}
$$
associated with the Young function $\gamma.$ Recall that $\gamma:\RR\to[0,\infty]$ is a Young function if it is convex, even, \lsc\ and $\gamma(0)=0.$ Important instances are 
\begin{enumerate}[-]
\item
$\gamma(a)=\gamma_p(a):= |a|^p/p,$ with $1\le p<\infty,$ then  $L^{\gamma_p}(m)=L^p(m);$
\item
$\gamma(a)=\gamma_\infty(a):=\left\{\begin{array}{rl}
0& \textrm{if }|a|\le1\\ \infty& \textrm{otherwise}
\end{array}\right.,$ then $L^{\gamma_\infty}=L^\infty(m).$
\end{enumerate}
Let us introduce the functions 
\begin{eqnarray}\label{eq-45}
\theta(a)&:=&e^a -a-1,\quad a\in\RR,\\ 
\theta^*(a)&:=&(a+1)\log(a+1)-a,\quad a\in[-1,\infty)\nonumber
\end{eqnarray}
with the convention $0\log 0=0.$ They are convex conjugate to each other and $\theta(a)=\log \mathbb{E}e^{a(N-1)}$ where $N$ is a Poisson(1) random variable. Moreover, $\theta(|a|)$ and $\theta^*(|b|)$ are Young functions which are also convex conjugate to each other.
\\
Two other important Orlicz spaces are 
\begin{enumerate}[-]
\item
$\gamma(a)=\theta(|a|)$ corresponds to the following $L^\gamma:$
$$
\Lexp:=\left\{u:\XX\to\RR; \textrm{ measurable, } \IX e^{a_o|u|}\,dm<\infty, \textrm{ for some } a_o>0 \right\},
$$
\item
$\gamma(a)=\theta^*(|a|)$ corresponds to the following $L^\gamma:$
$$
\LlogL:=\left\{u:\XX\to\RR; \textrm{ measurable, } \IX |u|\log_+|u|\,dm<\infty \right\},
$$
\end{enumerate}
where we use the assumed boundedness of the positive measure $m$ in  the above expressions.
\\
The Luxemburg norm of $\Lg$ is defined by $\|u\|_{\Lg}:=\inf_{}\{\alpha>0; \IX \gamma(|u|/\alpha)\,dm\le1\}.$ Let $\gamma^*(b):=\sup_{a\ge0}\{ab-\gamma(a)\}\in[0,\infty],$ $b\ge0,$ be the convex conjugate of $\gamma.$ It follows immediately from Fenchel's inequality
$
ab\le \gamma(a)+\gamma^*(b),
$
that the H\"older inequality 
$$
\|uv\|_{L^1(m)}\le 2 \|u\|_{\Lg}\|v\|_\Lgs,\quad u\in\Lg,v\in\Lgs
$$
holds true. In particular, since $\theta(|\cdot|)$ and $\theta^*(|\cdot|)$ are convex conjugate to each other, we have 
$
\|uv\|_{L^1(m)}\le 2 \|u\|_{\LlogL}\|v\|_\Lexp,$ for all $u\in\LlogL,v\in\Lexp.
$
\\
The Young function $\gamma$ is said to satisfy the \emph{condition}  $\Delta_2$ if there exist  constants $C,A>0$ such that $\gamma(2a)\le C\gamma(a),$ for all $a\ge A.$ 
The spaces $\LlogL$ and  $L^p(m)$ with $1\le p< \infty$ satisfy $\Delta_2.$ But $L^\infty(m)$ and $\Lexp$ do not.

\subsection*{Preliminary results}

We assume that the next finite entropy condition is satisfied
$$
\gg\ge0,\quad \IX \gg\log_+\gg\,dm<\infty
$$ 
and we pick a Young function $\gamma$ such that
\begin{equation}\label{eq-05} 
     \IX \gamma(\gg)\,dm<\infty  \quad\textrm{ and} \quad
\LlogL\subset\Lg\subset L^p(m) \textrm{ for some }  1<p<\infty.
\end{equation}
In particular, we have $\gamma\in \Delta_2.$
\\
Because of \eqref{eq-06}, \eqref{eq-04} and \eqref{eq-05}, with $G_t$ given by \eqref{eq-11}, we have for all $t\in\ii$ and $\alpha>0,$
$$
 	\IX \gamma(g_t/\alpha)\,dm
	=E_R \gamma(G_t/\alpha)
\le E_R\gamma(e^{\lo}G_1/\alpha)
\le C_{\gamma,\lo} E_R (G_1/\alpha)
$$
where $C_{\gamma,\lo}>0$ is some finite constant which can be derived by means of the condition $\Delta_2.$ Optimizing in $\alpha$ leads us to
$$
\|g_t\|_\Lg\le C_{\gamma,\lo}\|\gg\|_\Lg,\quad \forall t\in\ii.
$$
Recall that a real valued process $G$ is said to admit a \cadlag\
version if there exists a modification $G'$ of $G,$ i.e.\
$R(G_t\not =G'_t)=0$ for all $t\in\ii,$ with its sample paths in
$\DD:=D(\ii,\RR).$

\begin{lemma}\label{res-04}
Let us assume that in addition to \eqref{eq-06}, \eqref{eq-04} and \eqref{eq-05}, we have
\begin{equation}\label{eq-08}
    \II01 \|V_t\|_{L^1(m)}\,dt<\infty.
\end{equation}
 Then, the process $G$ admits a \cadlag\
version. In the sequel $G$ will always be assumed to be this
$\DD$-valued version.
\\
It is a nonnegative semimartingale which satisfies the so-called Feynman-Kac semigroup property:
\begin{equation}\label{eq-07}
    E_R\left[\exp\left(-\II st V_r(X_r)\,dr\right)G_t\mid
    \XXX0s\right]=G_s,
    \quad 0\le s\le t\le1.
\end{equation}
Moreover, denoting $G_*:=\sup_{t\in\ii}G_t,$ we have
\begin{equation*}
   \|G_*\NgR\le C_{\gamma,\lo} \|\gg\Ngm
\end{equation*}
for some finite positive constant $C_{\gamma,\lo}$.\\ This implies that $\{\gamma(G_t);t\in \ii\}$ is uniformly integrable in
$L^1(R).$
\end{lemma}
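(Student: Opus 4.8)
The plan is to realize $G$ as the product of a continuous finite-variation process and a genuine martingale, and to control $\sup_t G_t$ by dominating $G$ by a single nonnegative martingale to which Doob's maximal inequality applies. Write $A_t:=\II0t V_s(X_s)\,ds$ and $\Lambda_{s,t}:=\exp(-\II st V_r(X_r)\,dr)$, so that $\Lambda_{s,t}\Lambda_{t,1}=\Lambda_{s,1}$ and $G_t=E_R[\Lambda_{t,1}\gg(X_1)\mid\XXX0t]$. Because $\inf V\ge-\lo$ we have $\Lambda_{s,t}\le e^{\lo}$; in particular $\Lambda_{0,1}\gg(X_1)\le e^{\lo}\gg(X_1)$ is $R$-integrable, so both
\[
M_t:=E_R[\Lambda_{0,1}\gg(X_1)\mid\XXX0t],\qquad N_t:=E_R[\gg(X_1)\mid\XXX0t],\quad t\in\ii,
\]
are nonnegative uniformly integrable martingales. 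The semigroup property \eqref{eq-07} is then immediate: since $\Lambda_{s,t}$ is $\XXX0t$-measurable, the tower property gives $E_R[\Lambda_{s,t}G_t\mid\XXX0s]=E_R[\Lambda_{s,1}\gg(X_1)\mid\XXX0s]=G_s$.

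For the regularity statement I would pull the $\XXX0t$-measurable factor $\Lambda_{0,t}=e^{-A_t}$ out of $M_t$, obtaining the factorization
\[
G_t=e^{A_t}M_t,\quad t\in\ii.
\]
As a uniformly integrable martingale for the usual augmentation of the canonical filtration, $M$ admits a \cadlag\ version. The additional hypothesis \eqref{eq-08} and the stationarity $R_s=m$ give $E_R\II01|V_s(X_s)|\,ds=\II01\|V_s\|_{L^1(m)}\,ds<\infty$, hence $\II01|V_s(X_s)|\,ds<\infty$, $R\as$; thus $t\mapsto A_t$ is absolutely continuous and $e^{A}$ is a continuous finite-variation process. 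Therefore $G=e^{A}M$ is \cadlag, nonnegative and, being the product of a continuous finite-variation process and a martingale, a semimartingale. This is the $\DD$-valued version fixed henceforth.

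The heart of the lemma is the maximal estimate. Using $\Lambda_{t,1}\le e^{\lo}$ I would dominate, for every $t$,
\[
G_t=E_R[\Lambda_{t,1}\gg(X_1)\mid\XXX0t]\le e^{\lo}N_t,
\]
whence $G_*\le e^{\lo}N_*$ with $N_*:=\sup_{t\in\ii}N_t$. Applying Doob's maximal inequality in the Orlicz space $L^\gamma(R)$ to the nonnegative martingale $N$, whose terminal value is $N_1=\gg(X_1)$, yields $\|N_*\NgR\le C_\gamma\|N_1\NgR$; since $R_1=m$ one has $\|N_1\NgR=\|\gg\Ngm$, and combining the two estimates gives $\|G_*\NgR\le e^{\lo}C_\gamma\|\gg\Ngm=:C_{\gamma,\lo}\|\gg\Ngm$. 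The uniform integrability then comes for free: $0\le G_t\le G_*$ and the monotonicity of $\gamma$ give $\gamma(G_t)\le\gamma(G_*)$, while the bound just proved together with $\gamma\in\Delta_2$ shows $\gamma(G_*)\in L^1(R)$; domination by this fixed integrable function makes $\{\gamma(G_t);t\in\ii\}$ uniformly integrable.

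The one genuinely delicate point is the Orlicz-space Doob inequality $\|N_*\NgR\le C_\gamma\|N_1\NgR$. The classical inequality is clean only in $L^p$ with $p>1$ and degenerates to a mere $L\log L$ estimate at $p=1$, so it cannot be quoted verbatim for a general Young function; indeed it fails for $\gamma=\theta^*(|\cdot|)$. This is exactly where the hypotheses on $\gamma$ enter: the $\Delta_2$ condition together with the embedding $\Lg\subset L^p(m)$ with $p>1$ from \eqref{eq-05} forces the lower index of $\gamma$ to be at least $p>1$, which is precisely the condition under which Doob's maximal inequality holds in $L^\gamma$. I would obtain it either by invoking a standard Orlicz version of Doob's inequality, or by deriving it from the weak-type $(1,1)$ and $L^p$ maximal estimates through the rearrangement bound that controls the decreasing rearrangement of $N_*$ by the Hardy average of that of $N_1$, the boundedness of the Hardy operator on $L^\gamma$ being guaranteed by the same index condition. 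This is the main obstacle; all the other steps are routine.
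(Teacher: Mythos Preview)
Your proof is correct and close in spirit to the paper's, with a different (and arguably cleaner) route to the \cadlag\ property. The paper introduces the shifted process $\tilde G_t:=e^{-\lo(1-t)}G_t$, checks that it is a nonnegative submartingale (because $V+\lo\ge0$), and then invokes the criterion that a submartingale has a \cadlag\ modification once $t\mapsto E_R\tilde G_t$ is right-continuous, this last fact being verified by dominated convergence from \eqref{eq-08}. You instead factor $G_t=e^{A_t}M_t$ with $M$ a uniformly integrable martingale and $e^{A}$ continuous of finite variation, so the \cadlag\ version and the semimartingale property come at once from the martingale side; this bypasses the right-continuity check entirely. For the maximal estimate the two arguments essentially coincide: the paper applies Doob's inequality in $L^\gamma$ to the submartingale $\tilde G$ while you dominate $G_t\le e^{\lo}N_t$ and apply it to the martingale $N$ (note $\tilde G_t\le N_t$, so these are the same domination up to a constant), both landing on $\|G_*\NgR\le e^{\lo}C_\gamma\|\gg\Ngm$. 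Your closing discussion of when Doob's Orlicz inequality is valid (lower index of $\gamma$ strictly above~$1$, i.e.\ the inclusion $L^\gamma\subset L^p$ with $p>1$ in \eqref{eq-05}) is more explicit than the paper, which simply asserts the inequality under \eqref{eq-05} with a footnote pointing to Revuz--Yor for the $L\log L$ endpoint.
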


\proof Let us prove \eqref{eq-07}. For all $0\le s\le t\le1,$
\begin{eqnarray*}
   && E_R\left[\exp\left(-\II st V_r(X_r)\,dr\right)G_t\mid
   \XXX0s\right]\\
    &=&
  E_R\left[\exp\left(-\II st V_r(X_r)\,dr\right)E_R\left\{\exp\left(-\II t1 V_r(X_r)\,dr\right)G_1\mid \XXX0t\right\}\mid \XXX0s\right]\\
  &=& E_R\left[E_R\left\{\exp\left(-\II s1 V_r(X_r)\,dr\right)G_1\mid \XXX0t\right\}\mid \XXX0s\right] \\
  &=& E_R\left[\exp\left(-\II s1 V_r(X_r)\,dr\right)G_1\mid \XXX0s\right] \\
  &=& G_s
\end{eqnarray*}
which is \eqref{eq-07}.
\\
Let us define $\Vt:=V+\lo\ge0$ and for all $t\in\ii$
$$
    \Gt_t:=e^{-\lo(1-t)}G_t=E_R\left[\exp\left(-\II t1\Vt_s(X_s)\,ds\right)\Gt_1\mid\XXX0t
    \right]
$$
where $\Gt_1=G_1=\gg(X_1).$ Because $\Vt\ge0$, we see that  for all
$0\le s\le t\le1,$
\begin{eqnarray*}
  E_R\left(\Gt_t\mid\XXX0s\right)
  &=& E_R\left[\exp\left(-\II t1\Vt_r(X_r)\,dr\right)\Gt_1\mid\XXX0s\right] \\
  &\ge& E_R\left[\exp\left(-\II
  s1\Vt_r(X_r)\,dr\right)\Gt_1\mid\XXX0s\right]\\
  &=&\Gt_s.
\end{eqnarray*}
In other words, $\Gt$ is a nonnegative submartingale.
\\
It follows from the fact that the forward filtration satisfies the
standard assumptions and from a well-known result of the general
theory of stochastic processes that $\Gt$ admits a \cadlag\
modification (still denoted by $\Gt$) if
    $
t\in\ii\mapsto E_R\Gt_t\in[0,\infty)
    $
is a right continuous real function. But this latter property is a
direct consequence of Lebesgue's dominated convergence theorem and
the pathwise right continuity of
$$
    t\in\ii\mapsto \exp\left(-\II t1\Vt_r(X_r)\,dr\right)\in (0,1]
$$
which is satisfied under the assumption \eqref{eq-08}: $E_R\II01
|V_t(X_t)|\,dt<\infty,$ which implies that $\II01
|\Vt_t(X_t)|\,dt<\infty,$ $R\as$\footnote{Remark that without the
assumption that $\II01 |\Vt_t(X_t)|\,dt<\infty,$ $R\as$ and with
the convention $e^{-\infty}=0,$ $ t\in\ii\mapsto \exp\left(-\II
t1\Vt_r(X_r)\,dr\right)\in [0,1]$ is well-defined $R\as$, but it
might fail to be right continuous.}
\\
Furthermore,  we have
$E_R \gamma(\Gt_t)\le E_R \gamma(\Gt_1)<\infty$ by Jensen's inequality and the submartingale property.  Doob's maximal inequality, which holds for any nonnegative submartingale and any Young function $\gamma$ which verifies \eqref{eq-05}\footnote{For Doob's inequality in the class $L\log L,$ see \cite[p. 54]{RY99} for instance.}, tells us that there exists a positive finite constant $c_\gamma<\infty$ such that
\begin{equation*}
    \|\sup_{t\in\ii}\gamma(\Gt_t)\|_{L^1(R)}
    \le c_\gamma  \sup_{t\in\ii}\|\gamma(\Gt_t)\|_{L^1(R)}
    = c_\gamma\|\gamma(\Gt_1)\|_{L^1(R)}
    =c_\gamma\|\gamma(\gg)\|_{L^1(m)}<\infty.
\end{equation*}
Hence $\{\gamma(\Gt_t);t\in\ii\}$ is uniformly integrable in $L^1(R).$
Since the product of two semimartingales is still a
semimartingale, we  deduce that $G_t=e^{\lo(1-t)}\Gt_t$ is a
\cadlag\ semimartingale such that $\{\gamma( G_t);t\in\ii\}$ is uniformly
integrable in $L^1(R).$ This completes the proof of the lemma.
\endproof

Recall that since $R$ is a bounded nonnegative measure, a family
$\{H_t;t\in\ii\}$ of real valued measurable functions is uniformly
integrable in $L^1(R)$ if and only if there exists an increasing
convex function $\xi:[0,\infty)\to[0,\infty)$ such that $\Lim
a\xi(a)/a=+\infty$ and $\sup_{t\in\ii}E_R\xi(|H_t|)<\infty.$

\noindent \textsf{Claim}.\ \textit{ Let $A_t, B_t,$ $t\in\ii$ be two 
random variables such that both $\{\gamma( A_t);t\in\ii\}$ and
$\{\gamma^*( B_t);t\in\ii\}$ are uniformly integrable in $L^1(R)$. Then,
the family of products $\{A_tB_t;t\in\ii\}$ is uniformly
integrable in $L^1(R)$.}
\\
Let us prove this claim.  By hypothesis there exist two functions $\xi_1$ and
$\xi_2$ as above such that 
 $\sup_tE\xi_1(\gamma( A_t))<\infty$ and
$\sup_tE\xi_2(\gamma^* (B_t))<\infty$ where we wrote
$\sup_t=\sup_{t\in\ii}$ and $E=E_R$ for short.
  Let $\xi$ be the
convex envelope of $x\mapsto\xi_1(x/2)\wedge\xi_2(x/2).$ It is convex as a definition and still
increasing and satisfies $\Lim x\xi(x)/x=\infty.$ We also
obtain with Fenchel's inequality
    $\xi(|A_tB_t|)\le\xi(\gamma( A_t)+\gamma^*( B_t))
	\le \xi(2\gamma( A_t))/2+\xi(2\gamma^*( B_t))/2
    \le \xi_1(\gamma( A_t))+\xi_2(\gamma^*( B_t))$
for each $t\in\ii.$ Consequently,
    $\sup_tE\xi(|A_tB_t|)<\infty.$ This shows that $\{A_tB_t;t\in\ii\}$ is uniformly
integrable and completes the proof of the claim.

The assumption \eqref{eq-08} will not be strong enough for our
purpose. We strengthen it in the next lemma.

\begin{lemma}\label{res-05}
Let us assume in addition to \eqref{eq-06}, \eqref{eq-04} and
\eqref{eq-05} that the family $\{\gamma^*( V_t);t\in\ii\}$ is uniformly
integrable in $L^1(m)$. Then,
\begin{enumerate}
    \item $\{\frac1h\II t{t+h}|V_s|\,ds\ |G_{t+h}-G_t|;t\in\ii,h>0\}$ is uniformly
integrable in $L^1(R);$
    \item $\II01 G_tV_t(X_t)\,dt$ is in $L^1(R);$
    \item $\{V_tG_t(X_t);t\in\ii\}$ is uniformly
integrable in $L^1(R).$
\end{enumerate}
\end{lemma}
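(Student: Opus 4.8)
The plan is to prove the three uniform integrability / integrability statements by combining the maximal bound of Lemma \ref{res-04}, the generalized Hölder inequality for Orlicz spaces, and the ``product of uniformly integrable families'' claim established just before the statement. The new hypothesis — that $\{\gamma^*(V_t);t\in\ii\}$ is uniformly integrable in $L^1(m)$ — is the extra ingredient that lets us control $V$ against $G$ in conjugate Orlicz norms.

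\textbf{Proof of item (3).} I would prove this first, since it is the cleanest and the others reduce to similar reasoning. By Lemma \ref{res-04} we know $\{\gamma(G_t);t\in\ii\}$ is uniformly integrable in $L^1(R)$. The hypothesis gives that $\{\gamma^*(V_t);t\in\ii\}$ is uniformly integrable in $L^1(m)$. Since $G_t=g_t(X_t)$ and $R_t=m$ for all $t$, the family $\{\gamma^*(V_t(X_t));t\in\ii\}$ is uniformly integrable in $L^1(R)$ as well (push-forward by $X_t$ preserves the integral against $R$). Now apply the Claim proved immediately above the statement, with $A_t:=G_t$ and $B_t:=V_t(X_t)$: since $\{\gamma(A_t)\}$ and $\{\gamma^*(B_t)\}$ are both uniformly integrable in $L^1(R)$, the product family $\{A_tB_t\}=\{V_tG_t(X_t)\}$ is uniformly integrable in $L^1(R)$. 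This is exactly (3).

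\textbf{Proof of item (2).} This should follow from (3) by integrating in time. Uniform integrability of $\{V_tG_t(X_t);t\in\ii\}$ gives a uniform bound $\sup_{t\in\ii}E_R|V_tG_t(X_t)|<\infty$, whence by Tonelli
$$
E_R\II01 |V_t(X_t)|\,G_t\,dt=\II01 E_R|V_tG_t(X_t)|\,dt\le \sup_{t\in\ii}E_R|V_tG_t(X_t)|<\infty,
$$
so $\II01 G_tV_t(X_t)\,dt\in L^1(R)$. I would note that $G\ge0$ so there is no cancellation issue in the pathwise integral, and the finiteness of the $L^1(R)$ norm gives the claim.

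\textbf{Proof of item (1).} This is the step I expect to be the main obstacle, because it involves the increment $|G_{t+h}-G_t|$ together with the time-averaged potential $\frac1h\II t{t+h}|V_s|\,ds$, and one must get a bound uniform in both $t$ and $h$. The natural move is to dominate the increment by the maximal function: $|G_{t+h}-G_t|\le 2G_*$ where $G_*=\sup_{s\in\ii}G_s$, which by Lemma \ref{res-04} satisfies $\|G_*\NgR\le C_{\gamma,\lo}\|\gg\Ngm<\infty$, so $\{\gamma(2G_*)\}$ is (trivially, being a single function in $L^1$) uniformly integrable. For the averaged potential I would use convexity of $\gamma^*$ and Jensen's inequality to write $\gamma^*\!\big(\frac1h\II t{t+h}|V_s|\,ds\big)\le \frac1h\II t{t+h}\gamma^*(|V_s|)\,ds$, and then use the assumed uniform integrability of $\{\gamma^*(V_s);s\in\ii\}$ to produce, via the $\xi$-characterization of uniform integrability recalled above, a single convex superlinear $\xi_2$ with $\sup_s E_R\xi_2(\gamma^*(|V_s(X_s)|))<\infty$; applying $\xi_2$ and Jensen again transfers this bound to the time-averages uniformly in $t$ and $h$. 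With both conjugate families controlled, I would apply the same Claim (product of $\xi_1$- and $\xi_2$-dominated families is $\xi$-dominated) to conclude uniform integrability of the product $\{\frac1h\II t{t+h}|V_s|\,ds\,|G_{t+h}-G_t|\}$. The delicate point to verify carefully is that the Jensen/convex-envelope manipulations preserve the superlinearity needed for uniform integrability simultaneously over the two-parameter index set $\{(t,h)\}$, rather than just for fixed $h$.
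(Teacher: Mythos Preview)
Your proof is correct and matches the paper's argument almost exactly: item (3) via the Claim combined with Lemma~\ref{res-04}, item (1) via Jensen on the time-average of $\gamma^*(V_s)$ to get uniform integrability of $\{\gamma^*(\tfrac1h\int_t^{t+h}|V_s|\,ds)\}$ and then the Claim against $\{\gamma(|G_{t+h}-G_t|)\}\le\{\gamma(2G_*)\}$. The only cosmetic difference is item (2), which the paper obtains directly from Fenchel's inequality $G_t|V_t|\le \gamma(G_*)+\gamma^*(V_t)$ rather than as a corollary of (3); both routes are immediate.
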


\proof We write $V_t=V_t(X_t),$ $\sup_t=\sup_{t\in\ii}$ and
$E=E_R$ for short.

\boulette{(1)} There exists a function $\xi$ as above such that
$\sup_tE\xi(\gamma^*(V_t))<\infty.$ But
    $E\xi\left(\frac1h \II t{t+h}\gamma^*(V_s)\,ds\right)
    \le \frac1h \II t{t+h}E\xi(\gamma^*(V_s))\,ds
    \le \sup_tE\xi(\gamma^*(V_t))<\infty.$
This shows that $\{\frac1h \II t{t+h}\gamma^*(V_s)\,ds;t\in\ii,h>0\}$ is
uniformly integrable. On the other hand, we already know by Lemma
\ref{res-04} that $\{\gamma(|G_{t+h}-G_t|);t\in\ii,h>0\}$ is also
uniformly integrable. The above claim permits us to conclude.

    \boulette{(2)} We see that
    $$E\II01 G_t|V_t|\,dt\le E(\gamma( G_*))+E\II01 \gamma^*(V_t)\,dt
    \le  E(\gamma( G_*))+\sup_t E\gamma^*(V_t)<\infty$$
which is finite by Lemma \ref{res-04} and the assumption that
$\{\gamma^*(V_t);t\in\ii\}$ is uniformly integrable.

    \boulette{(3)} The result directly follows from the above Claim, Lemma \ref{res-04} and our assumptions on $V.$
\endproof

\subsection*{The extended Feynman-Kac generator}

The main result of this section is the next theorem.

\begin{theorem}\label{res-06}
Let us take the following ingredients.
\begin{enumerate}[(i)]
\item
$R\in\PO$ is a stationary Markov process with invariant law $m=R_t\in\PX$ for all
$t\in\ii;$
\item
$\gamma$ is a Young function which satisfies \eqref{eq-05} and $\gamma^*$ is its convex conjugate;
\item
$V$ is a measurable function on $\iX$ which is
bounded below and is such that $\{\gamma^*( V_t);t\in\ii\}$ is uniformly
integrable in $L^1(m);$
\item
$\gg$ is a nonnegative function on $\XX$ 
in $\Lg.$
\end{enumerate}
Then, the function $g:(t,x)\in\iX\mapsto g_t(x)\in[0,\infty)$
which is defined for all $t\in\ii,$ $m$-almost everywhere by \eqref{eq-11}:
\begin{equation*}
    g_t(X_t):=E_R\left[\exp\left(-\II t1 V_s(X_s)\,ds\right)\gg(X_1)\mid
    \XXX0t\right],
    \quad R\as,
\end{equation*}
is a nonnegative  function in $L^\gamma(\iX,dtm(dx))$ which is in $\dom
L^R$ and in $\dom\LR.$ Moreover, it satisfies
$$
    L^Rg(t,x)=\LR g(t,x)=V_t(x)g_t(x),
    \quad dtm(dx)\ae
$$
and $\IiX |V_t(x)|g_t(x)\,dtm(dx)<\infty.$
\end{theorem}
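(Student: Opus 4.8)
The plan is to establish the theorem by identifying $V_t(x)g_t(x)$ as the extended generator (equivalently, stochastic derivative) of $g$ under $R$, using the Feynman-Kac semigroup property from Lemma \ref{res-04} together with the partial converse Proposition \ref{res-03}. The first integrability claim --- that $g\in L^\gamma(\iX,dtm(dx))$ --- follows immediately from the bound $\|g_t\|_\Lg\le C_{\gamma,\lo}\|\gg\|_\Lg$ established just before Lemma \ref{res-04}, integrated over $t\in\ii$; and $\IiX|V_t(x)|g_t(x)\,dtm(dx)<\infty$ is precisely Lemma \ref{res-05}-(2). So the real content is the generator identity.

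The key step is to exhibit the right semimartingale decomposition of $G_t=g_t(X_t)$. From the Feynman-Kac relation \eqref{eq-07}, I would compute, for $0\le s\le t\le1$,
\begin{equation*}
E_R\left[G_t-G_s\mid\XXX0s\right]=E_R\left[G_t-\exp\left(-\II st V_r(X_r)\,dr\right)G_t\mid\XXX0s\right].
\end{equation*}
The plan is to differentiate this in $t$: writing $\exp(-\int_s^t V_r\,dr)=1-\int_s^t V_r\exp(-\int_s^r V_u\,du)\,dr$ and using the semigroup property to reorganize the inner conditional expectations, one expects to arrive formally at $E_R[G_t-G_s\mid\XXX0s]=E_R[\int_s^t V_r(X_r)G_r\,dr\mid\XXX0s]$, so that $M_t:=G_t-\int_0^t V_r(X_r)G_r\,dr$ is an $R$-martingale (at least locally). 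This would make $v(t,x)=V_t(x)g_t(x)$ the candidate for $\LR g$.

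To make this rigorous without differentiating under conditional expectations directly, I would instead verify the integrated hypothesis \eqref{eq-03} of Proposition \ref{res-03}. The idea is to choose a localizing sequence $\tau_k$ of integration times of $g$ --- these exist because $\{\gamma(G_t);t\in\ii\}$ is uniformly integrable by Lemma \ref{res-04}, so the whole path family is uniformly $R$-integrable and any $\tau_k=\inf\{t;\int_0^t|V_s(X_s)|G_s\,ds\ge k\}$ serves --- and then to show the $L^1$-convergence in \eqref{eq-03} with $v=V_tg_t$. The convolution machinery of Lemma \ref{res-21} is the natural tool: setting $k^h=\frac1h\1_{[-h,0]}$, the difference quotient $\frac1h\{G_{t+h}-G_t\}$ should match $k^h*(V_\cdot G_\cdot)(t)$ up to a term controlled by $\{\frac1h\int_t^{t+h}|V_s|\,ds\,|G_{t+h}-G_t|\}$, and Lemma \ref{res-05}-(1) gives exactly the uniform integrability needed to pass to the limit in $L^1(\Pb)$. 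Once \eqref{eq-03} holds, Proposition \ref{res-03} delivers both $g\in\dom\LR$, $g\in\dom L^R$, and the equality $\LR g=L^Rg=V_tg_t$ simultaneously.

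The main obstacle I anticipate is the passage from the Feynman-Kac semigroup identity \eqref{eq-07} to the differential/integrated form of the drift, because $V$ is only assumed lower bounded with $\{\gamma^*(V_t)\}$ uniformly integrable --- no continuity in $t$ --- so one cannot naively differentiate $t\mapsto\exp(-\int_s^t V_r\,dr)$ pathwise at individual times. The remedy is precisely the integrated estimate: one never differentiates at a fixed $t$ but controls the time-average, where Lemma \ref{res-05}-(1) supplies the crucial uniform integrability of the error term $\frac1h\int_t^{t+h}|V_s|\,ds\,|G_{t+h}-G_t|$ that would otherwise obstruct the limit. Verifying carefully that the martingale increments and the convolution approximation line up --- i.e.\ that the local martingale part of $G$ contributes zero to the conditional difference quotient while the bounded-variation part reproduces $\int V_rG_r\,dr$ --- is the delicate bookkeeping, but it is routine once the integrability is secured.
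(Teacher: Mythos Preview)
Your proposal is correct and follows essentially the same route as the paper: verify hypothesis \eqref{eq-03} of Proposition \ref{res-03} with $v(t,x)=V_t(x)g_t(x)$, starting from the Feynman-Kac identity \eqref{eq-07} and controlling the remainders via Lemmas \ref{res-04}, \ref{res-05} and \ref{res-21} (with $\tau_k\equiv\infty$ thanks to the uniform integrability of $\{G_t\}$). The paper organises the bookkeeping as an explicit three-term decomposition $-\tfrac1h E_R[G_{t+h}-G_t\mid X_t]+V_tG_t=E_R[A^t_h+B^t_h+C^t_h\mid X_t]$, where $B^t_h=\tfrac1h(e^{-\int_t^{t+h}V_s\,ds}-1)(G_{t+h}-G_t)$ is precisely your cross term (Lemma \ref{res-05}-(1)), $C^t_h=G_t\cdot\tfrac1h\int_t^{t+h}(V_t-V_s)\,ds$ is the convolution piece (Lemma \ref{res-21} applied to $V$ against the bounded measure $G_*R$, rather than to $VG$ as you suggest), and there is one further term $A^t_h=\tfrac1h\,\theta\big(\!-\!\int_t^{t+h}V_s\,ds\big)G_t$ --- the second-order Taylor remainder of $e^{-a}$ --- which your single error bound does not by itself absorb, but which vanishes by the same uniform-integrability reasoning.
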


\proof The proof is based on an application of Proposition
\ref{res-03} with $u(t,x)=g_t(x)$ and $v(t,x)=V_t(x)g_t(x).$ We
write $V_t=V_t(X_t)$ and $E=E_R$ for short.
\\
We know by Lemma \ref{res-05} that $\IiX
|V_t(x)|g_t(x)\,dtm(dx)=E\II01 |V_tG_t|\,dt<\infty.$ This implies
that $\II01 |V_tG_t|\,dt<\infty,$ $R\as$ We
have also seen at Lemma \ref{res-04} that $G_t$ is a right
continuous uniformly integrable process. It follows that we can
choose $\tau_k=\infty$ $R\as$ for all $k\ge1$ in formula
\eqref{eq-03} and that for all $0\le s\le t\le1,$ $t\in [s,1]\mapsto
E(G_t\mid X_s)$ is a right continuous real function. Therefore, to obtain
the announced results, it is sufficient to show that
\begin{equation*}
    \Limh E\II0{1-h}\left|\frac1h E_P[G_{t+h}-G_{t}\mid X_t]-V_tG_t\right|\,dt
    =0.
\end{equation*}
We decompose
$$
     -\frac1h E[G_{t+h}-G_{t}\mid  X_t]+V_tG_t=E_P[A^t_h+B^t_h+C^t_h\mid X_t]
$$
where
\begin{eqnarray*}
  A^t_h &:=& \frac1h \theta\left(-\II t{t+h}V_s\,ds\right)G_t \\
  B^t_h &:=& \frac1h \left(e^{-\II t{t+h}V_s\,ds}-1\right)(G_{t+h}-G_t) \\
  C^t_h &:=& G_t \frac1h \II t{t+h} (V_t-V_s)\,ds
\end{eqnarray*}
with $\theta(a):=e^a-a-1,$ $a\in\RR$ which we already met at \eqref{eq-45}. It remains to prove that
$$
    \Limh E\II0{1-h} |A^t_h|\,dt=\Limh E\II0{1-h} |B^t_h|\,dt=\Limh E\II0{1-h} |C^t_h|\,dt=0.
$$
\Boulette{$\Limh E\II0{1-h} |A^t_h|\,dt=0$} We have
\begin{multline*}
   0\le \frac1h\theta\left(-\II t{t+h}V_s\,ds\right) \\
    \le \left|\left(\frac1h\II t{t+h}V_s\,ds\right)\left(e^{-\II t{t+h}V_s\,ds}-1\right)\right|
    \le\lo(e^{\lo h}-1)+\frac1h\II t{t+h}|V_s|\,ds.
\end{multline*}
But $E \gamma^* \left(\frac1h\II t{t+h}|V_s|\,ds\right) \le
   \frac1h\II t{t+h}E \gamma^*( |V_s|)\,ds \le\sup_tE \gamma^*(| V_t|)<\infty$ by
   assumption.
Since $\Lim b \gamma^*(b)/b=\infty$ because $\gamma$ doesn't grow too fast, $\{A^t_h;t\in\ii,h>0\}$ is uniformly integrable. This leads
us to the desired convergence result since $\Limh A^t_h(\omega)=0$
for $dtR(d\omega)$-almost all $(t,\omega)\in \ii\times\OO.$

\Boulette{$\Limh E\II0{1-h} |B^t_h|\,dt=0$} Since $t\mapsto\II 0t
V_s\,ds$ is absolutely continuous and $t\mapsto G_t$ is right
continuous $R\as$, we see that
$$
    |B^t_h|\le e^{\lo h}\frac1h\II t{t+h}|V_s|\,ds\ |G_{t+h}-G_t|
    \underset{h\downarrow 0}\rightarrow 0,\quad R\as
$$
On the other hand we have shown at Lemma \ref{res-05} that
$\{B^t_h;t\in\ii,h>0\}$ is uniformly integrable.

\Boulette{$\Limh E\II0{1-h} |C^t_h|\,dt=0$} We have
    $$
  E\II0{1-h}  G_t \left|\frac1h \II t{t+h}
  (V_t-V_s)\,ds\right|\,dt
  \le E \left[G_*\II0{1-h}  \left|V_t-\frac1h \II t{t+h}V_s\,ds\right|\,dt\right]
    $$
where we put $V_t=0$ for all $t>1.$ By Lemma \ref{res-04}, $G_*\in
L^{\gamma}(R)$. Therefore the measure $G_*R$ is a  bounded measure and we can
apply Lemma \ref{res-21} with $v(t,\omega)=V_t(\omega)$ in
$L^1(\iO,G_*R)$ and $k^h=\frac 1h \1_{[-h,0]}.$ This completes the proof of the theorem.
\endproof

\section{Generalized $h$-transforms of a Markov process}\label{sec-Gh}

Let $R\in\PO$ be a stationary Markov process with the invariant probability
measure $m\in\PX$ as in Section \ref{sec-FK}. In the present section we consider the process
\begin{equation}\label{eq-10}
    P:= \ff(X_0)\exp\left(-\II01 V_t(X_t)\,dt\right)\gg(X_1)\, R\in\PO
\end{equation}
where $V:\iX\to\RR$ is a lower bounded measurable potential and
\begin{equation}\label{eq-09}
     \ff\in\Lgs,\gg\in \Lg,\quad \ff,\gg\ge0.
\end{equation}
It is assumed once for all that $$R(\ff(X_0)\gg(X_1)>0)>0$$ to discard the uninteresting trivial situation where $P=0.$ We normalize $\ff$ and $\gg$    to obtain $P(\OO)=1.$
\\
Remark that $\exp\left(-\II01 V_t(X_t)\,dt\right)$ is bounded. It
follows with the assumption \eqref{eq-09} that $\ff(X_0)\in L^{\gamma^*}(R),\gg(X_1)\in
L^\gamma(R)$ and that $\ff(X_0)\exp\left(-\II01
V_t(X_t)\,dt\right)\gg(X_1)$ is a nonnegative $R$-integrable
function. Hence, it can be normalized such that $P$ is a
probability measure.

\begin{definition}[Generalized $h$-transform of $R$]
Let $R\in\PO$ be a stationary Markov process which admits an invariant probability measure.
\\
A process $P\in\PO$ which is specified by formula
\eqref{eq-10}  is called a
generalized $h$-transform of $R,$ or a generalized $h$-process for short.
\end{definition}
It is not essential that $R$ is assumed to be a stationary Markov process in this definition.
\\
Our aim is to identify $P$ as the solution of a martingale
problem. To do it, we are going to derive the extended generator $\LP $ of
the generalized $h$-process $P$ on a  class of functions
$\mathcal{C}$ which is large enough to characterize $P$. With
Theorem \ref{res-02}, we see that we are on the way to compute its
stochastic derivative $L^P$ on
$\mathcal{C}.$

\subsection*{Playing with the Markov property}

\emph{Recall that $P\in\PO$ is a Markov process if and only if for all
$t\in\ii,$  $\XXX0t$ and $\XXX t1$ are
independent with respect to the conditional law $P(\cdot\mid
X_t).$} In other words, if and only if the past and future are
independent conditionally on the present. This property is
invariant with respect to time reversal. In particular the time
reversed process of $R$ is still Markov.  As with the definition
of $g$ at \eqref{eq-11}, one can define a measurable function
$f_t(x)$ on $\iX$ by the formula
\begin{equation}\label{eq-12}
    E_R\left[\ff(X_0)\exp\left(-\II 0t V_s(X_s)\,ds\right)\mid
    \XXX t1\right]=:f_t(X_t),\quad t\in\ii
    \quad R\as
\end{equation}
since  $E_R(a\mid\XXX t1)=E_R(a\mid X_t)$ for any
$\XXX0t$-measurable and integrable function $a.$ As
$\exp\left(-\II 0t V_s(X_s)\,ds\right)$ is bounded and $\ff(X_0)\in
L^\gs(R),$ we see that $f_t\in \Lgs$ for all $t\in\ii.$

\begin{proposition}\label{res-10}\
\begin{enumerate}
    \item The generalized $h$-process $P$ is Markov.
         \item For every $t\in\ii,$ $P_t\ll m$ and
\begin{equation}\label{eq-13}
    \frac{dP_t}{dm}=f_tg_t
\end{equation}
where $f_t$ and $g_t$ are  defined respectively by \eqref{eq-12}
and \eqref{eq-11} and  stand respectively  in $\Lgs$ and $\Lg$.
    \item  For every $0\le s\le t\le1,$
    \begin{eqnarray}
   \frac{dP_{[s,t]}}{dR_{[s,t]}}
    &=& \frac{dP_s}{dm}(X_s) g_s(X_s)^{-1}\exp\left(-\II st V_r(X_r)\,dr\right)g_t(X_t)\label{eq-14f}\\
    &=&  f_s(X_s) \exp\left(-\II st
    V_r(X_r)\,dr\right)f_t(X_t)^{-1}\frac{dP_t}{dm}(X_t)\label{eq-14b}\\
    &=& f_s(X_s)\exp\left(-\II st V_r(X_r)\,dr\right) g_t(X_t)
    \nonumber
\end{eqnarray}
where  no division by zero occurs in the sense that $g_s>0,$
$P_s\as$ and $f_t>0,$ $P_t\as$
\end{enumerate}
\end{proposition}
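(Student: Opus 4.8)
The plan is to prove the three assertions of Proposition \ref{res-10} in sequence, each leaning on the explicit Radon-Nikodym form \eqref{eq-10} and the Markov property of $R$. For part (1), I would verify the conditional independence characterization recalled just before the statement: fix $t\in\ii$ and show that under $P(\cdot\mid X_t)$ the past $\XXX0t$ and future $\XXX t1$ are independent. The key observation is that the density in \eqref{eq-10} factorizes multiplicatively across the time $t$: writing $\exp(-\II01 V_s\,ds)=\exp(-\II0t V_s\,ds)\exp(-\II t1 V_s\,ds)$, the density becomes a product of a $\XXX0t$-measurable factor $\ff(X_0)\exp(-\II0t V_s\,ds)$ and a $\XXX t1$-measurable factor $\exp(-\II t1 V_s\,ds)\gg(X_1)$. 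Since $R$ is Markov, conditioning on $X_t$ already makes past and future $R$-independent; multiplying the $R$-conditional law by a product of a past-factor and a future-factor preserves this independence after renormalization. So $P$ inherits the Markov property.

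For part (2), I would compute $dP_t/dm$ by pushing forward the density. For a bounded measurable test function $\phi$ on $\XX$, $E_P\phi(X_t)=E_R[\phi(X_t)\ff(X_0)\exp(-\II01 V_s\,ds)\gg(X_1)]$. Splitting the exponential at $t$ and conditioning on $\XXX0t$ and on $\XXX t1$ respectively, I would use the definitions \eqref{eq-11} and \eqref{eq-12}: conditioning the future factor on $\XXX0t$ yields $g_t(X_t)$ by \eqref{eq-11}, and conditioning the past factor on $\XXX t1$ yields $f_t(X_t)$ by \eqref{eq-12}. Because $\XXX0t$ and $\XXX t1$ are $R$-conditionally independent given $X_t$, the two conditional expectations factor, leaving $E_R[\phi(X_t)f_t(X_t)g_t(X_t)]$. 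Since $R_t=m$, this reads $\IX \phi\,f_tg_t\,dm$, giving \eqref{eq-13}. The membership $f_t\in\Lgs$, $g_t\in\Lg$ was already established (before the proposition and in Theorem \ref{res-06} respectively), so $f_tg_t$ is $m$-integrable by the H\"older inequality, confirming $P_t\ll m$.

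For part (3), I would compute $dP_{[s,t]}/dR_{[s,t]}$ by disintegrating \eqref{eq-10} over the middle block $\XXX st$. Against a bounded $\XXX st$-measurable test function $\Phi$, I write $E_P\Phi=E_R[\Phi\,\ff(X_0)\exp(-\II01 V_r\,dr)\gg(X_1)]$ and split the exponential into the three pieces over $[0,s]$, $[s,t]$, $[t,1]$. Conditioning the $\XXX0s$-factor on $\XXX s1$ gives $f_s(X_s)$ via \eqref{eq-12}, and conditioning the $\XXX t1$-factor on $\XXX0t$ gives $g_t(X_t)$ via \eqref{eq-11}; the conditional independence of past and future given the endpoints of the block again lets these factor out. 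This yields the symmetric third expression $f_s(X_s)\exp(-\II st V_r\,dr)g_t(X_t)$, and the first two displayed forms \eqref{eq-14f} and \eqref{eq-14b} follow by substituting $\frac{dP_s}{dm}=f_sg_s$ and $\frac{dP_t}{dm}=f_tg_t$ from \eqref{eq-13}. The nondivision claim is handled by noting $\{g_s=0\}$ is $P_s$-null and $\{f_t=0\}$ is $P_t$-null: indeed $P_s(g_s=0)=\IX\1_{\{g_s=0\}}f_sg_s\,dm=0$ since the integrand vanishes, and symmetrically for $f_t$.

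The main obstacle I anticipate is organizing the conditioning rigorously so that the two one-sided conditional expectations genuinely decouple. The subtlety is that one must condition the past factor and the future factor on \emph{different} $\sigma$-fields (the future $\XXX t1$ resp.\ the past $\XXX0t$), and then invoke that, under $R$, conditioning on $X_t$ renders these independent, so that their product conditional expectation equals the product of the individual conditional expectations. Making this last factorization airtight — essentially the identity $E_R[AB\mid X_t]=E_R[A\mid X_t]E_R[B\mid X_t]$ for past-measurable $A$ and future-measurable $B$, combined with the tower property over the intermediate $\sigma$-fields — is the crux; once it is in place, all three parts are routine bookkeeping with the definitions \eqref{eq-11} and \eqref{eq-12}.
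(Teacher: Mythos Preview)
Your proposal is correct and follows essentially the same route as the paper: factor the density \eqref{eq-10} across the chosen time(s), invoke the Markov property of $R$ to decouple past and future conditional expectations, and read off \eqref{eq-13} and the three forms in part (3). The only cosmetic difference is that the paper computes $E_R(dP/dR\mid\XXX st)$ directly (obtaining the third expression in part (3)) and then specializes to $s=t$ to get \eqref{eq-13}, whereas you treat (2) separately before (3); the conditioning manipulations and the use of Markov are identical.
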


\proof \boulette{(1)}
 Fix $0<t<1$ and take two bounded nonnegative functions
$a$ and $b$ such that $a$ is $\XXX0t$-measurable and $b$ is $\XXX
t1$-measurable. Let us write $\alpha=\ff(X_0)\exp\left(-\II0t
V_s(X_s)\,ds\right)\in \sigma(\XXX0t)$ and $\beta=\exp\left(-\II t1
V_s(X_s)\,ds\right)\gg(X_1)\in \sigma(\XXX t1)$ so that $P=\alpha\beta\,R$
and
    $$
    E_P(ab\mid X_t)
    =\frac{E_R(ab\alpha\beta\mid X_t)}{E_R(\alpha\beta\mid X_t)}
    \overset{\checkmark}=\frac{E_R(a\alpha\mid X_t)E_R(b\beta\mid X_t)}{E_R(\alpha\mid X_t)E_R(\beta\mid X_t)}
    =E_P(a\mid X_t)E_P(b\mid X_t)
    $$
where we used the Markov property of $R$ at the marked equality.
This proves that $P$ is Markov.

\Boulette{(2) and (3)} As a general result of integration theory,
if $P=ZR$ with $Z\in L^1(R),$ then the push-forward
$P_\phi:=\phi\pf P$ of the measure $P$ by the measurable
application $\phi$ is absolutely continuous with respect to
$R_\phi:=\phi\pf R$ and
    $P_\phi=E_R(Z\mid \phi)\, R_\phi$
where $E_R(Z\mid \phi):=E_R(Z\mid \sigma(\phi))$ is the
conditional expectation of $Z$ with respect to the $\sigma$-field
$ \sigma(\phi)$  generated by $\phi.$ In particular, with
$\phi=\XXX st$ we obtain
$$
    P_{[s,t]}=E_R(dP/dR\mid\XXX st)\,R_{[s,t]}.
$$
We have
\begin{eqnarray*}
  &&E_R(dP/dR\mid\XXX st) \\
    &=& E_R\left[\ff(X_0)\exp\left(-\II01 V_r(X_r)\,dr\right)\gg(X_1)\mid\XXX st\right]\\
    &=& E_R\left[\ff(X_0)\exp\left(-\left\{\II0s +\II st+\II t1\right\}  V_r(X_r)\,dr\right)\gg(X_1)\mid\XXX
    st\right]\\
    &=& f_s(X_s)\exp\left(-\II st V_r(X_r)\,dr\right) g_t(X_t)
\end{eqnarray*}
where the Markov property of $R$ is used at last equality. In
particular, when $s=t$ this gives us \eqref{eq-13}. But with
\eqref{eq-13}, we see that for all $t,$ $f_t>0$ and $g_t>0,$
$P_t\as,$ $f_s(X_s)=g_s(X_s)^{-1}\frac{dP_s}{dm}(X_s)$ and
$g_t(X_t)=f_t(X_t)^{-1}\frac{dP_t}{dm}(X_t).$
\endproof

\subsection*{A preliminary result under a  finite entropy condition}
A seemingly innocent result is proved at Proposition \ref{res-13} below. But in fact it is a  mendatory technical key  to our approach. It states that, provided that the canonical process is a \emph{nice} $R$-semimartingale (see Definition \ref{def-01}), under the assumption that the \emph{relative entropy}  
$$H(P|R):=\int \log\dPR\,dP<\infty$$ is finite, if a large class of regular functions stands in $\dom\LR,$ then it is also in $\dom\LP.$

Let $\mathbf{r}$ be a probability on $\DD$ such that the canonical
process $\xx$ on $\DD$ is a nice semimartingale 
\begin{equation}\label{eq-18}
    \xx=\xx_0+B+M^\mathbf{r},\quad \mathbf{r}\as
\end{equation}
where $B$ is an absolutely continuous process and $M^\mathbf{r}$
is a local $\mathbf{r}$-martingale. Suppose also that the
quadratic variation and the jump compensator are absolutely
continuous. More precisely, there exists a nonnegative adapted
process $a$ such that $\II01 a_t\,dt<\infty,$ $\mathbf{r}\as$ and
$$
    d[\xx,\xx]^c_t=a_t\,dt,\quad \mathbf{r}\as
$$
and the dual predictable projection $\overline{\ell}$ of the jump
measure $\sum_{0\le s\le t}\delta_{(s,\Delta \xx_s)}$ has the
following form
$$
    \overline{\ell}_t(dtd q)=dt\ell_t(d q), \quad \mathbf{r}\as
$$
This means that $\ell_t=\ell(t,\xx_{[0,t)};\cdot)$ is a predictable
nonnegative measure on $\RR_*:=\RR\setminus\{0\}$ such that
$$
    \sum_{0\le s\le t}f(s,\xx_{[0,s)};\Delta \xx_s)
    = \int_{[0,t]\times\RR_*}f(s,\xx_{[0,s)}; q)\,ds\ell_s(d q)
    +M^f_t
$$
    where $M^f$ is a local $\mathbf{r}$-martingale and this
    decomposition is valid
for any measurable function $f$ such that
$\int_{\ii\times\RR_*}|f(t,\xx_{[0,t)}; q)|\,dt\ell_t(d q)<\infty,$
$\mathbf{r}\as$
\\
It is also assumed that
\begin{equation}\label{eq-15}
 \int_{\ii\times\RR_*}\theta(\alpha| q|)\,dt\ell_t(d q)<\infty,\ \forall \alpha\ge0\quad \mathbf{r}\as
\end{equation}
where $\theta(a):=e^a-a-1,$ $a\in\RR$ already appeared at \eqref{eq-45}.

\begin{lemma}\label{res-07}
Let $\mathbf{r}$ be as above and $\mathbf{p}$ be a probability on $\DD$ such that
$H(\mathbf{p}|\mathbf{r})<\infty.$ Then, $\xx$ is also a nice
$\mathbf{p}$-semimartingale.
\end{lemma}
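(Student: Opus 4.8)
The plan is to realise $\xx$ as a $\mathbf{p}$-semimartingale through an absolutely continuous change of measure and to check that its predictable finite-variation part acquires no singular component. Since $H(\mathbf{p}|\mathbf{r})<\infty$ we have $\mathbf{p}\ll\mathbf{r};$ I would let $Z$ be the density process $Z_t:=E_\mathbf{r}[\tfrac{d\mathbf{p}}{d\mathbf{r}}\mid \xx_{[0,t]}],$ a uniformly integrable nonnegative $\mathbf{r}$-martingale, strictly positive $\mathbf{p}\as$ and satisfying $E_\mathbf{r}(Z\log Z)=H(\mathbf{p}|\mathbf{r})<\infty.$ The Girsanov-Meyer theorem then guarantees that the special $\mathbf{r}$-semimartingale $\xx=\xx_0+B+M^\mathbf{r}$ stays a semimartingale under $\mathbf{p},$ with
\[
\xx=\xx_0+\Big(B+\II0\cdot Z_{s-}^{-1}\,d[\xx,Z]_s\Big)+\widetilde M^\mathbf{p},\quad \mathbf{p}\as
\]
where $\widetilde M^\mathbf{p}$ is a local $\mathbf{p}$-martingale and $[\xx,Z]=[M^\mathbf{r},Z]$ because $B$ is continuous of finite variation. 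As $B$ is already absolutely continuous, it remains only to show that the correction $\II0\cdot Z_{s-}^{-1}\,d[\xx,Z]_s$ is, up to a local $\mathbf{p}$-martingale, absolutely continuous; equivalently, that $\xx$ is $\mathbf{p}$-special with absolutely continuous predictable drift.

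I would split $[\xx,Z]=[\xx^c,Z^c]+\sum_{s\le\cdot}\Delta\xx_s\,\Delta Z_s$ and treat the two parts separately. For the continuous part, the continuous quadratic variation $[\xx,\xx]^c=\II0\cdot a_s\,ds$ is pathwise, hence unchanged under $\mathbf{p};$ the Kunita-Watanabe inequality gives $|d[\xx^c,Z^c]_s|\le(d[\xx^c]_s)^{1/2}(d[Z^c]_s)^{1/2}\ll a_s\,ds,$ so that $\II0\cdot Z_{s-}^{-1}\,d[\xx^c,Z^c]_s$ is absolutely continuous, its energy being dominated by the continuous contribution to $H(\mathbf{p}|\mathbf{r}).$

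The jump part is where \eqref{eq-15} and the finiteness of the entropy must combine, and this is the heart of the argument. By Girsanov's theorem for random measures there is a predictable $Y\ge0$ with $\nu^\mathbf{p}=Y\cdot\nu^\mathbf{r},$ where $\nu^\mathbf{r}(dtdq)=dt\,\ell_t(dq);$ thus $\nu^\mathbf{p}$ stays absolutely continuous in time and the $\mathbf{p}$-predictable compensator of the jump correction is $\II0\cdot\big(\int_{\RR_*}q\,(Y(s,q)-1)\,\ell_s(dq)\big)\,ds,$ which is absolutely continuous as soon as the inner integral is finite for Lebesgue-almost every $s.$ To secure this I would invoke Fenchel's inequality for the conjugate Young functions $\theta(|\cdot|)$ and $\theta^*(|\cdot|)$ of \eqref{eq-45}: $|q|\,|Y-1|\le\theta(|q|)+\theta^*(|Y-1|).$ Integrating $dt\,\ell_t(dq),$ the first term is finite by \eqref{eq-15} (with $\alpha=1$), while the second is controlled by the entropy since $\theta^*(Y-1)=Y\log Y-Y+1$ is precisely the integrand of the jump contribution to $H(\mathbf{p}|\mathbf{r}).$ The same Orlicz duality, applied to $\int_{\{|q|>1\}}|q|\,Y(s,q)\,\ell_s(dq),$ yields the local integrability of the large jumps that makes $\xx$ a $\mathbf{p}$-special semimartingale in the first place.

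Gathering the pieces, the predictable drift of $\xx$ under $\mathbf{p}$ is the sum of $B,$ the continuous correction, and $\II0\cdot(\int q(Y-1)\ell_s(dq))\,ds,$ all three absolutely continuous, the remainder being a local $\mathbf{p}$-martingale; hence $\xx$ is a nice $\mathbf{p}$-semimartingale in the sense of Definition \ref{def-01}. The hard part will be the step linking entropy to characteristics, namely the inequality $E_\mathbf{p}\int\theta^*(Y-1)\,d\nu^\mathbf{r}\le H(\mathbf{p}|\mathbf{r})$ together with its continuous counterpart. Because I deliberately avoid the martingale representation property, this cannot be read off a stochastic-integral expression of $Z;$ instead I would derive it from a data-processing (contraction) inequality for the relative entropy, projecting $\mathbf{p}$ and $\mathbf{r}$ onto the law of the jump configuration of $\xx,$ whose relative entropy equals $E_\mathbf{p}\int\theta^*(Y-1)\,d\nu^\mathbf{r}$ and is dominated by $H(\mathbf{p}|\mathbf{r}).$ Controlling this jump-marginal entropy, in tandem with the exponential integrability \eqref{eq-15}, is the crux on which the whole lemma rests.
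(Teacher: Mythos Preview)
Your route via the density process $Z$ and Girsanov--Meyer is legitimate but genuinely different from the paper's. The paper never introduces $Z$: it works directly with the variational formula \eqref{eq-16}, tests it against stochastic exponentials $\mathcal{E}(h\cdot M^\mathbf{r})$ for simple predictable $h$ (using \eqref{eq-15} to ensure these are supermartingales, so that $\log E_\mathbf{r}\mathcal{E}(h\cdot M^\mathbf{r})_1\le0$), obtains the bound $|E_\mathbf{p}(h\cdot M^\mathbf{r}_1)|\le(H(\mathbf{p}|\mathbf{r})+1)|h|_\mathbf{p}$ in an Orlicz seminorm built from $\Phi(t,\eta;x)=a_t x^2/2+\int\theta(|qx|)\,\ell_t(dq)$, and then reads off the absolutely continuous drift from the Riesz representation of this bounded linear functional on the small Orlicz space. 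This bypasses both the Girsanov theorem for random measures and any separate treatment of continuous versus jump parts.

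Your continuous-part argument is fine (and in fact needs only $\mathbf{p}\ll\mathbf{r}$, not the entropy). The gap is in the jump part. You correctly identify the target inequality $E_\mathbf{p}\int\theta^*(Y-1)\,d\nu^\mathbf{r}\le H(\mathbf{p}|\mathbf{r})$, but your proposed proof via contraction does not work as stated: projecting onto the jump configuration of $\xx$ changes the filtration, and the compensator of the jump measure with respect to the smaller $\sigma$-field generated by the jumps alone is \emph{not} $dt\,\ell_t(dq)$ (which is predictable for the full filtration of $\xx$), nor is the ratio of the projected compensators equal to your $Y$ (which is also full-filtration predictable). Hence the relative entropy of the jump-configuration laws is not $E_\mathbf{p}\int\theta^*(Y-1)\,d\nu^\mathbf{r}$ in general; the equality you invoke holds precisely when the jumps generate the filtration, i.e.\ under the representation hypothesis you wish to avoid. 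The inequality you need is true, but its proof without representation goes through the variational formula applied to stochastic exponentials of purely discontinuous martingales---which is exactly the mechanism the paper uses (and develops further in \cite{Leo11a}). In short, the step you flag as the crux cannot be short-circuited by data processing; filling it requires the paper's idea or an equivalent.
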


\begin{remarks}\
\begin{enumerate}
    \item Girsanov's theorem tells us that if $\xx$ is an
$\mathbf{r}$-semimartingale and $\mathbf{p}\ll\mathbf{r},$ then
$\xx$ is also a $\mathbf{p}$-semimartingale. This lemma tells us
that the property of being a \emph{nice} semimartingale is also
hereditary under the stronger condition that
$H(\mathbf{p}|\mathbf{r})<\infty.$
    \item In case when no jump occurs and the $\mathbf{r}$-semimartingale is
built on a Brownian filtration, it is well-known that Lemma
\ref{res-07} is still valid with the weaker assumption that
$\mathbf{p}\ll\mathbf{r}$ instead of
$H(\mathbf{p}|\mathbf{r})<\infty.$ This follows from Girsanov's
theorem and a martingale representation theorem.
	\item  The assumption $H(\mathbf{p}|\mathbf{r})<\infty$ is not
    very restrictive. Indeed, $\mathbf{p}\ll\mathbf{r}$ means that
    $d\mathbf{p}/d\mathbf{r}\in L^1(\mathbf{r}),$ while
    $H(\mathbf{p}|\mathbf{r})<\infty$ means that
    $(d\mathbf{p}/d\mathbf{r})\log_+\left(d\mathbf{p}/d\mathbf{r}\right)\in L^1(\mathbf{r}).$
\item
For more details about extensions of this result, see \cite{Leo11a}.
\end{enumerate}
\end{remarks}

\proof The proof is based on the variational representation
\begin{equation}\label{eq-16}
     H(\mathbf{p}|\mathbf{r})=\sup\{E_{\mathbf{p}}u-\log E_{\mathbf{r}}e^u;
    u \textrm{ measurable}: E_{\mathbf{r}}e^u<\infty\}
\end{equation}
of the relative entropy which holds true for any probability
measure $\mathbf{p}$ such that $H(\mathbf{p}|\mathbf{r})$ is
finite, see for instance \cite[Lemma 3.1]{Leo11a} for a proof.
\\
Let $h$ belong to the space $\mathcal{S}$ of all simple
predictable processes: $$h_t=h_0\1_{\{0\}}(t)+\sum_{i=1}^k
h_i\1_{(T_{i},T_{i+1}]}(t)$$ with $k$ a finite integer,
$h_i\in\sigma(\xx_{[0,T_i)}),$ $|h_i|<\infty$ and $0\le T_1\le
\cdots \le T_{k+1}=1$ an increasing sequence of stopping times.
Its stochastic integral with respect to $M^\mathbf{r}$ is $h\cdot
M^\mathbf{r}_t=\sum_{i=1}^kh_i(M^\mathbf{r}_{T_{i+1}\wedge
t}-M^\mathbf{r}_{T_i\wedge t}),$ $t\in\ii$ and the stochastic
exponential of $h\cdot M^\mathbf{r}$ is
$$
    \mathcal{E}(h\cdot M^\mathbf{r})_t
    =\exp \left(h\cdot M^\mathbf{r}_t
    -\II0t \frac{h_s^2}2 a_s\,ds -\int_{[0,t]\times\RR_*}\theta(h_s q)\,ds\ell_s(d q)\right)
$$
Under the assumption \eqref{eq-15}, the integrals in the
exponential are finite $\mathbf{r}\as$ so that the sequence of
stopping times $\inf\{t\in\ii; \II0t \frac{h_s^2}2 a_s\,ds
+\int_{[0,t]\times\RR_*}\theta(h_s q)\,ds\ell_s(d q)\ge k\}$
tends to infinity $\mathbf{r}\as$ as $k$ tends to infinity. It
follows that $ \mathcal{E}(h\cdot M^\mathbf{r})$ is a positive
supermartingale and in particular that: $E_{\mathbf{r}}
\mathcal{E}(h\cdot M^\mathbf{r})_1\le 1.$ Therefore, for any $h$
in $\mathcal{S},$ $\log E_{\mathbf{r}} \mathcal{E}(h\cdot
M^\mathbf{r}_1)\le 0$ and with \eqref{eq-16} we obtain
\begin{eqnarray*}
    E_{\mathbf{p}}(h\cdot M^\mathbf{r}_1)
    &\le& H(\mathbf{p}|\mathbf{r})
    + E_{\mathbf{p}}\left(\II01 \frac{h_t^2}2 a_t\,dt
    +\int_{\ii\times\RR_*}\theta(h_t q)\,dt\ell_t(d q)\right)\\
   &\le& H(\mathbf{p}|\mathbf{r})
    +
    \int_{\ii\times\DD}\Phi(t,\eta;h_t(\eta))\,\overline{\mathbf{p}}(dtd\eta)
\end{eqnarray*}
where
$$
    \overline{\mathbf{p}}(dtd\eta)=dt\mathbf{p}(d\eta)
$$
and for all $t\in\ii,\eta\in\DD, x\in\RR,$
$$
    \Phi(t,\eta;x):=a_t(\eta)x^2/2+\int_{\RR_*}\theta(| q x|)\ell(t,\eta;d q).
$$
A standard convexity argument (note that $\theta(|x|)$ is a convex
nonnegative even function) proves that the gauge functional
$$
    |h|_\mathbf{p}:=\inf\left\{\alpha>0;  \int_{\ii\times\DD}\Phi(t,\eta;h_t(\eta)/\alpha)\,\overline{\mathbf{p}}(dtd\eta)\le
    1\right\}\in[0,\infty)
$$
is a seminorm on $\mathcal{S}.$ Considering $h/|h|_\mathbf{p}$ and
$-h/|h|_\mathbf{p}$ in the above inequality,  it is easy to deduce
that
\begin{equation*}
|E_{\mathbf{p}}(h\cdot M^\mathbf{r}_1)| \le
(H(\mathbf{p}|\mathbf{r})+1)|h|_\mathbf{p},\quad \forall h\in
\mathcal{S}.
\end{equation*}
This means that if $H(\mathbf{p}|\mathbf{r})<\infty,$ $h\mapsto
E_{\mathbf{p}}(h\cdot M^\mathbf{r}_1)$ is a
$|\cdot|_\mathbf{p}$-continuous linear form on $\mathcal{S}.$ But,
$|\cdot|_\mathbf{p}$ is the seminorm of an Orlicz space and by
assumption \eqref{eq-15}, $\int
\Phi(ah)\,d\overline{\mathbf{p}}<\infty$ for all $a\ge0$ and $h\in
\mathcal{S}.$ This implies that $\mathcal{S}$ is a subspace of the
``small'' Orlicz space   
$S^\Phi(\overline{\mathbf{p}}):=\{f:\ii\times\DD\to\RR, \textrm{
measurable}, \int
\Phi(t,\eta;af_t(\eta))\,\overline{\mathbf{p}}(dtd\eta)<\infty,\forall
a\ge0\}$ whose dual representation is well-known, see
\cite{RaoRen}: There exists a measurable function $k$ on
$\ii\times\DD$ which stands in the ``large'' Orlicz space
$\{k:\ii\times\DD\to\RR, \textrm{ measurable}, \int
\Phi^*(t,\eta;a_ok_t(\eta))\,\overline{\mathbf{p}}(dtd\eta)<\infty,
 \textrm{ for some }a_o>0\}=:L^{\Phi^*}(\overline{\mathbf{p}})\subset
L^{1}(\overline{\mathbf{p}})$ associated with the convex
conjugates $\Phi^*(t,\eta;\cdot)$ of $\Phi(t,\eta;\cdot),$ such
that
\begin{equation}\label{eq-17}
    E_{\mathbf{p}}(h\cdot M^\mathbf{r}_1)
    =\int_{\ii\times\DD} k_t(\eta)h_t(\eta)\,\overline{\mathbf{p}}(dtd\eta),
    \quad \forall h\in\mathcal{S}.
\end{equation}
Since $h$ is predictable, we also have
    $
    \int
        kh\,d\overline{\mathbf{p}}=
   \II01 E_\mathbf{p} (k_th_t)\,dt
    =E_\mathbf{p}\II01 E_\mathbf{p}(k_t\mid \xx_{[0,t)})h_t\,dt
    $
and taking $\tilde{b}_t=E_\mathbf{p}(k_t\mid \xx_{[0,t)})$ we see
with \eqref{eq-17} that
$$
     E_{\mathbf{p}}\left(\II01 h_t\,dM^\mathbf{r}_t
     -\II01 h_t\tilde{b}_t\,dt\right)=0,
     \quad\forall  h\in\mathcal{S}.
$$
It follows that $M^\mathbf{p}_t:=M^\mathbf{r}_t-\widetilde{B}_t$
with $\widetilde{B}_t:=\II0t\tilde{b}_s\,ds$ is a local
$\mathbf{p}$-martingale and with \eqref{eq-18} we finally obtain
that
    $$\xx=\xx_0+B+\widetilde{B}+M^\mathbf{p},\quad \mathbf{p}\as$$
where $B+\widetilde{B}$ has absolutely continuous sample paths
$\mathbf{p}\as$
\endproof

Let us go back to $R$ and $P$ given at \eqref{eq-10}.

\begin{definition}[The class $\UR$] Let the reference
Markov process $R$ be given.  We say that the measurable function
$u:\iX\to\RR$ is in the class $\UR$ (with respect to $R$):
$u\in\UR,$ if
\begin{enumerate}[(a)]
    \item  $u\in\dom\LR ;$
    \item $d[u_t(X_t),u_t(X_t)]^c\ll dt,$ $R\as;$
    \item the predictable dual projection $\overline{\ell^u}$ of
$\sum_{t\in\ii}\delta_{(t,\Delta u_t(X_t))}$ satisfies
    $\overline{\ell^u}(dtd q)=dt\ell^u_t(d q)$ and
    $\int_{\ii\times\RR_*}\theta(\alpha| q|)\,dt\ell^u_t(d q)<\infty$
for all $\alpha\ge0,$ $R\as$
\end{enumerate}
\end{definition}

In other words, $u\in\UR$ if  the process $u(t,X_t)$ is a
$R$-semimartingale and its law
$\mathbf{r}\in\mathrm{P}(\DD)$ meets the assumptions of Lemma \ref{res-07}.
 
\begin{remark}\label{rem-01}
For the class $\UR,$ we have in mind $\CCc$ when $R$ is such that the canonical process $X$ is a \emph{nice}  $R$-semimartingale with its values in $\XX=\Rd$. Indeed, at least in the continuous case when no exponential moments of $\ell^u$ are required,  It\^o's formula immediately implies that $\CCc\subset\UR.$ 
\\
Otherwise, if $X$ is not a nice  $R$-semimartingale, then it might happen that $\UR$ reduces to the constant functions.
\end{remark}

A useful result is the following
\begin{proposition}\label{res-13}
Let us assume that $H(P|R)<\infty.$
 Then any $u\in \UR$ is also in $\dom\LP .$
\end{proposition}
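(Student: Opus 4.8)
The plan is to recognise that the three conditions defining the class $\UR$ are nothing but the structural hypotheses \eqref{eq-18} and \eqref{eq-15} demanded of a reference law in Lemma \ref{res-07}, now read for the real-valued process $Y_t:=u(t,X_t)$, and then to invoke that lemma with the roles of $(\mathbf{r},\mathbf{p})$ played by $(R,P)$. Concretely, I would first set $Y_t:=u(t,X_t)$ and note that condition (a), $u\in\dom\LR$, makes $Y$ a nice $R$-semimartingale $Y=Y_0+B+M^R$ with $B$ absolutely continuous and $M^R$ a local $R$-martingale; condition (b) gives $d[Y,Y]^c_t=a_t\,dt$; and condition (c) gives a jump compensator $dt\,\ell^u_t(dq)$ carrying all exponential moments. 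These are exactly the data \eqref{eq-18} and \eqref{eq-15}.

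Pushing $Y$ forward by the adapted map $\omega\mapsto(Y_t(\omega))_{t\in\ii}\in\DD$ transports $R$ to a law $\mathbf{r}\in\mathrm{P}(\DD)$ which, by the very definition of $\UR$, meets the assumptions of Lemma \ref{res-07}. Writing $\mathbf{p}$ for the push-forward of $P$ by the same map, the contraction of relative entropy under a measurable map yields
$$
H(\mathbf{p}|\mathbf{r})\le H(P|R)<\infty,
$$
so Lemma \ref{res-07} applies and shows that the canonical process $\xx$ on $\DD$ is a nice $\mathbf{p}$-semimartingale, its bounded-variation part being absolutely continuous $\mathbf{p}\as$

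The delicate point -- and I expect it to be the main obstacle -- is that $\dom\LP$ is defined through local $P$-martingales for the canonical filtration $\left(\sigma(X_{[0,t]})\right)$ (Definition \ref{def-05} and Remark \ref{rem-02}-(a)), whereas the push-forward above only records the self-filtration of $Y$, which may be strictly smaller. Since absolute continuity of a drift in a small filtration need not propagate to a larger one, transferring the conclusion naively would be unsound. I would therefore avoid literally passing to $\DD$ and instead run the mechanism of Lemma \ref{res-07}'s proof in place on $(\OO,\sigma(X_{[0,\cdot]}),R,P)$: the variational formula \eqref{eq-16} for $H(P|R)$, the stochastic exponentials $\mathcal{E}(h\cdot M^R)$ of simple $\sigma(X_{[0,\cdot]})$-predictable integrands $h$ against $M^R$, the supermartingale bound $E_R\mathcal{E}(h\cdot M^R)_1\le1$, and the Orlicz-duality step all go through verbatim, precisely because conditions (a)--(c) supply the required data adapted to the canonical filtration.

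This produces a measurable $k\in L^{\Phi^*}(\overline P)$, hence an absolutely continuous drift correction $\widetilde B_t=\int_0^t\tilde b_s\,ds$ with $\tilde b_s=E_P(k_s\mid X_{[0,s)})$, such that $M^R-\widetilde B$ is a local $P$-martingale for $\left(\sigma(X_{[0,t]})\right)$. Consequently $u(\cdot,X)=Y_0+(B+\widetilde B)+(M^R-\widetilde B)$ is a nice $P$-semimartingale, so $u\in\dom\LP$ by Remark \ref{rem-02}-(a). The only genuine work beyond transcribing the proof of Lemma \ref{res-07} is checking that the martingale part $M^R$ of $Y$ under $R$ is available with the integrability \eqref{eq-15}, which is guaranteed by membership in $\UR$.
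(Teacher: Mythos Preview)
Your approach is correct and shares the paper's core idea---recognise that conditions (a)--(c) of $\UR$ are exactly the structural hypotheses \eqref{eq-18}--\eqref{eq-15} of Lemma~\ref{res-07} for the process $Y_t=u(t,X_t)$, and combine that lemma with entropy contraction. The paper's proof, however, does precisely what you call the naive push-forward: it sets $\Psi=(u_t(X_t))_{t\in\ii}$, $\mathbf{r}=\Psi\pf R$, $\mathbf{p}=\Psi\pf P$, invokes $H(\mathbf{p}|\mathbf{r})\le H(P|R)<\infty$, applies Lemma~\ref{res-07} on $\DD$, and then simply asserts that $(u_t(X_t))$ is a nice $P$-semimartingale---without addressing the filtration discrepancy you flag between the self-filtration $\sigma(Y_{[0,t]})$ produced on $\DD$ and the canonical filtration $\sigma(X_{[0,t]})$ demanded by $\dom\LP$. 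Your remedy of running the variational/Orlicz mechanism of Lemma~\ref{res-07} directly on $(\OO,\sigma(X_{[0,\cdot]}),R,P)$ against the $R$-local-martingale part of $u(\cdot,X)$ is sound: since conditions (a)--(c) of $\UR$ are formulated in that filtration, the exponential supermartingale bound, the Orlicz duality step, and the predictable projection giving $\tilde b$ all transfer verbatim and yield a nice $P$-semimartingale decomposition in the correct filtration. In short, you have reproduced the paper's argument and tightened a point it leaves implicit.
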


\proof Let $\Psi:\OO\to\DD$ be the application
$\Psi=(u_t(X_t))_{t\in\ii}.$ The measure $\mathbf{r}=\Psi\pf
R\in\mathrm{P}(\DD)$ is the law of the process
$(u_t(X_t))_{t\in\ii}$ when the canonical process is governed by
$R\in\PO.$ By the definition of the class $\UR,$  $\mathbf{r}$
satisfies the assumptions of Lemma \ref{res-07}. Let
$\mathbf{p}=\Psi\pf P$ be the law of  $(u_t(X_t))_{t\in\ii}$ under
$P\in\PO.$ By contraction of the relative entropy (an easy
consequence of \eqref{eq-16}), we have
$H(\mathbf{p}|\mathbf{r})=H(\Psi\pf P|\Psi\pf R)\le
H(P|R)<\infty.$ This is the second assumption of Lemma \ref{res-07}, and this lemma tells us that $(u_t(X_t))_{t\in\ii}$ is a nice
$P$-semimartingale, i.e.\ $u\in\dom\LP .$
\endproof

\begin{lemma}\label{res-12}
Let $P\in\PO$ be specified by \eqref{eq-10} with $\inf V>-\infty$
and $\ff,\gg\ge0.$  Then, for $H(P|R)<\infty,$ it is sufficient that
$\IX \ff^2\log_+^{p}(\ff)\,dm<\infty$ and $\IX
\gg^2\log_+^{p}(\gg)\,dm<\infty$ for some $p>1.$
\end{lemma}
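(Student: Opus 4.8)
The plan is to show directly that the entropy integral converges. Writing $\rho:=\dPR=\ff(X_0)\exp\!\big(-\II01 V_t(X_t)\,dt\big)\gg(X_1)$ (the normalisation being incorporated into $\ff,\gg$), I would use $H(P|R)=E_R[\rho\log\rho]$ together with the elementary bound $t\log t\ge -1/e$ on $[0,\infty)$. Since $R$ is a probability, this gives $E_R[(\rho\log\rho)^-]\le 1/e<\infty$, so it suffices to establish $E_R[\rho\log_+\rho]<\infty$.

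The first move is to discard the potential. As $\inf V\ge-\lo$ we have $\exp\!\big(-\II01 V_t(X_t)\,dt\big)\le e^{\lo}$, hence $0\le\rho\le\tilde\rho:=C\,\ff(X_0)\gg(X_1)$ with $C=e^{\lo}$. Because $t\mapsto t\log_+t$ is nondecreasing, $\rho\log_+\rho\le\tilde\rho\log_+\tilde\rho$, and using $\log_+(xy)\le\log_+x+\log_+y$ twice,
$$\tilde\rho\log_+\tilde\rho\le C\,\ff(X_0)\gg(X_1)\big(\log_+C+\log_+\ff(X_0)+\log_+\gg(X_1)\big).$$
Taking $E_R$ reduces the whole statement to the finiteness of the three expectations $E_R[\ff(X_0)\gg(X_1)]$, $E_R[\ff(X_0)\log_+\ff(X_0)\cdot\gg(X_1)]$ and $E_R[\ff(X_0)\cdot\gg(X_1)\log_+\gg(X_1)]$, each of the product form $E_R[F(X_0)G(X_1)]$.

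The crucial structural fact is that $F$ depends only on $X_0$, $G$ only on $X_1$, and stationarity gives $(X_0)\pf R=(X_1)\pf R=m$. I would therefore decouple by the Orlicz--H\"older inequality on $(\OO,R)$, namely $E_R[F(X_0)G(X_1)]\le 2\,\|F(X_0)\|_{L^A(R)}\|G(X_1)\|_{L^{A^*}(R)}=2\,\|F\|_{L^A(m)}\|G\|_{L^{A^*}(m)}$, the last equality transferring each Luxemburg norm to $\XX$ through the marginal identity. For $E_R[\ff(X_0)\gg(X_1)]$ the plain pair $A(s)=A^*(s)=s^2/2$ (i.e.\ Cauchy--Schwarz) already works, since $\IX\ff^2\log_+^p\ff\,dm<\infty$ forces $\ff\in L^2(m)$, and likewise $\gg\in L^2(m)$. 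For the mixed term $E_R[(\ff\log_+\ff)(X_0)\,\gg(X_1)]$ I would choose $A$ with large-$s$ behaviour $A(s)\approx s^2(\log_+s)^{p-2}$, so that $A(\ff\log_+\ff)\approx\ff^2\log_+^p\ff$ places the $\ff$-side exactly under the hypothesis, while the conjugate grows like $A^*(t)\approx t^2(\log_+t)^{2-p}$; the symmetric choice handles the third term.

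The main obstacle is this last step: promoting the asymptotic templates $A(s)\approx s^2(\log_+s)^{p-2}$ and $A^*(t)\approx t^2(\log_+t)^{2-p}$ to a genuine conjugate pair of Young functions (modified near the origin to be legitimate and $\Delta_2$) and controlling the exact growth of $A^*$. This is precisely where $p>1$ is used: the $\gg$-side requirement becomes $\IX\gg^2(\log_+\gg)^{2-p}\,dm<\infty$, and the strict inequality $2-p<p$, i.e.\ the gap $2(p-1)>0$, allows the hypothesis $\IX\gg^2\log_+^p\gg\,dm<\infty$ to dominate it while also absorbing the lower-order logarithmic corrections produced by the exact Legendre transform, which would be uncontrolled at the borderline $p=1$. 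Once $\|\ff\log_+\ff\|_{L^A(m)}$, $\|\gg\|_{L^{A^*}(m)}$ and their symmetric counterparts are seen to be finite, the three expectations are finite, whence $E_R[\rho\log_+\rho]<\infty$ and $H(P|R)<\infty$.
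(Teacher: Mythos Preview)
Your reduction is correct and matches the paper's: discard the bounded factor $\exp(-\int V)$, expand $\log_+(FG)\le\log_+F+\log_+G$, and reduce everything to bounding $E_R[(F\log_+F)G]$ and its symmetric counterpart with $F=\ff(X_0)$, $G=\gg(X_1)$. Where you diverge is in how you decouple these products. You invoke Orlicz--H\"older with a pair $A(s)\approx s^2(\log_+s)^{p-2}$, $A^*(t)\approx t^2(\log_+t)^{2-p}$, and you correctly flag the remaining work: building a genuine Young function with this large-$s$ behaviour and verifying the asymptotics of its Legendre conjugate. This can be done, but it is exactly the step the paper avoids. Instead of duality, the paper proves the elementary pointwise inequality
\[
xy\le x^2\log_+x + y^2(\log_+y)^{-q},\qquad x\ge0,\ y\ge y_q,
\]
for any $0<q<1$, by the two-case argument ``either $y\le x\log_+x$, or else $x\le y(\log_+y)^{-q}$ for large $y$''. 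Applying this with $(x,y)=(G,F\log_+F)$ and $(x,y)=(F,G\log_+G)$ yields directly
\[
FG\log_+(FG)\le 2F^2(\log_+F)^{2-q}+2G^2(\log_+G)^{2-q}
\]
for large $F,G$, and one chooses $q\in(0,1)$ with $2-q\le p$, which is possible precisely because $p>1$. Your Orlicz route and the paper's pointwise route are two faces of the same Young inequality, and both pinpoint $p>1$ through the gap $2-p<p$; the paper's version simply bypasses the conjugate-function bookkeeping you identify as the main obstacle, making the argument fully self-contained in a few lines.
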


\proof Since $m$ and $R$ are bounded positive measures, only the
large values of the functions are important as regards
integrability issues. As $\exp(-\II01 V_t\,dt)$ is bounded, all we
have to show is that if two nonnegative functions $F=\ff(X_0)$ and
$G=\gg(X_1)$ satisfy $\int F^2\log_+^{p}(F)\,dR<\infty$ and $\int
G^2\log_+^{p}(G)\,dR<\infty$ with $p>1,$ then $\int FG\log_+(FG)\,dR<\infty.$
\\
For all $x,y\ge 0,$ we have $xy\le x^2\log_+x$ when $y\le x\log_+
x$ and in the alternate case when $y\ge x\log_+x,$ we see that for
any $0<q<1$ and $y\ge y_q$ large enough, $x\le y (\log_+y)^{-q}.$
Hence,
    $$
    xy\le x^2\log_+x +y^2(\log_+y)^{-q}, \quad\forall x\ge0, y\ge
    y_q.
    $$
Now, for $F,G$ large enough we have
\begin{eqnarray*}
  FG\log_+(FG)
  &\le& (F\log_+F) G+F G\log_+G \\
  &\le& F^2\log_+F +G^2\log_+G
  +\frac{(F\log_+F)^2}{\log_+^{q}(F\log_+F)}
    +\frac{(G\log_+G)^2}{\log_+^{q}(G\log_+G)}\\
  &\le& 2F^2\log_+^{2-q}F +2G^2\log_+^{2-q}G
\end{eqnarray*}
which completes the proof of the lemma.
\endproof

Let $\chi(a)$ be a Young function, then 
\begin{equation}\label{eq-30}
\gamma_f(a):=\chi(|a|\log_+|a|)\quad \textrm{and}\quad\gamma_g(b):=\chi^*(|b|\log_+|b|)
\end{equation}
are also  Young functions. Clearly, 
$ab\log_+(ab)\le (a\log_+a)b+ a(b\log_+b)\le
2[\gamma_f(a)+\gamma_g(b)]$ for any large enough positive numbers $a,b.$ Therefore, if $f\in L^{\gamma_f}$ and $g\in L^{\gamma_g},$ then $fg\in L\log L.$

Gathering our last results leads us to the following statement.

\begin{theorem}\label{res-08}
Let $P$ be the generalized $h$-process given at \eqref{eq-10} with
$\inf V>-\infty$ and the functions $\ff$ and $\gg$ such that one of the following conditions is satisfied:
\begin{enumerate}[(i)]
\item 
$\ff\in L^{\gamma_f}(m)$ and $\gg\in L^{\gamma_g}(m)$ where $\gamma_f$ and $\gamma_g$ satisfy \eqref{eq-30};
\item
$\IX \ff^2\log_+^{p}(\ff)\,dm<\infty$ and $\IX
\gg^2\log_+^{p}(\gg)\,dm<\infty$ for some $p>1.$
\end{enumerate}
 Then, $H(P|R)<\infty$
and any function $u\in\dom\LR $ which is in the class
$\UR$ is also in the extended domain $\dom\LP $
associated with $P.$
\end{theorem}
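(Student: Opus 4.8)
The statement has two parts: first, that the finite-entropy condition $H(P|R)<\infty$ holds under either (i) or (ii); second, that this condition forces every $u\in\UR$ into $\dom\LP$. The second part is already available: it is precisely Proposition \ref{res-13}. Moreover, since condition (a) in the definition of $\UR$ is $u\in\dom\LR$, the class $\UR$ is contained in $\dom\LR$, so the hypothesis ``$u\in\dom\LR$ which is in the class $\UR$'' simply means $u\in\UR$. Thus the entire content of the theorem reduces to establishing $H(P|R)<\infty$, after which I would invoke Proposition \ref{res-13} and stop. The plan is therefore to prove the finite-entropy bound under each of the two sets of assumptions.

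First I would reduce $H(P|R)<\infty$ to an $L\log L$ estimate on the endpoint product. After normalization, $Z:=\dPR=\ff(X_0)\exp(-\II01 V_t(X_t)\,dt)\gg(X_1)$ with $E_RZ=1$, so that $H(P|R)=\int\log Z\,dP=E_R[Z\log Z]$. Since $z\mapsto z\log_-z=z\max(\log(1/z),0)$ is bounded on $[0,\infty)$ (its values lie in $[0,1/e]$), the negative part contributes at most $R(\OO)/e$, and finiteness of $H(P|R)$ is equivalent to $E_R[Z\log_+Z]<\infty$. Because $\inf V>-\infty$ I may pick $\lo<\infty$ with $V\ge-\lo$, whence $0\le\exp(-\II01 V_t(X_t)\,dt)\le e^{\lo}$ and $Z\le e^{\lo}\ff(X_0)\gg(X_1)$. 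Using $\log_+(cx)\le\log c+\log_+x$ for $c\ge1$, this bounds $E_R[Z\log_+Z]$ by $e^{\lo}$ times $\lo\,E_R[\ff(X_0)\gg(X_1)]+E_R[\ff(X_0)\gg(X_1)\log_+(\ff(X_0)\gg(X_1))]$; and since $xy\le e+xy\log_+(xy)$, the first expectation is dominated by the second plus a constant. Hence it suffices to show
$$
E_R\big[\ff(X_0)\gg(X_1)\log_+(\ff(X_0)\gg(X_1))\big]<\infty,
$$
i.e.\ that $\ff(X_0)\gg(X_1)\in L\log L(R)$.

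It remains to verify this $L\log L$ bound under either assumption. Under (ii) there is nothing further to do: Lemma \ref{res-12} asserts precisely that $\IX\ff^2\log_+^p\ff\,dm<\infty$ and $\IX\gg^2\log_+^p\gg\,dm<\infty$ (some $p>1$) imply $H(P|R)<\infty$, and its proof establishes exactly the displayed integrability of the product. Under (i) I would use the pointwise inequality recorded just after \eqref{eq-30}, namely $ab\log_+(ab)\le2[\gamma_f(a)+\gamma_g(b)]$ for all sufficiently large $a,b\ge0$. Applying it (after the harmless rescaling by a common constant $a_o>0$ furnished by the Orlicz memberships $\ff\in L^{\gamma_f}(m)$, $\gg\in L^{\gamma_g}(m)$) with $a=\ff(X_0)$ and $b=\gg(X_1)$ and integrating against $R$ yields
$$
E_R\big[\ff(X_0)\gg(X_1)\log_+(\ff(X_0)\gg(X_1))\big]\le 2\,E_R\gamma_f(\ff(X_0))+2\,E_R\gamma_g(\gg(X_1))+\mathrm{const}.
$$
The decisive structural point is that the right-hand side has split into one term per endpoint, so the (generally nontrivial) joint law of $(X_0,X_1)$ under $R$ never enters: since $R$ is stationary with $R_0=R_1=m$, these expectations equal $\IX\gamma_f(\ff)\,dm$ and $\IX\gamma_g(\gg)\,dm$, both finite by hypothesis. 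This gives $\ff(X_0)\gg(X_1)\in L\log L(R)$, hence $H(P|R)<\infty$, and Proposition \ref{res-13} completes the proof.

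The main obstacle is exactly this decoupling of the product $\ff(X_0)\gg(X_1)$ across the two time-endpoints. The raw integrability hypotheses $\ff\in\Lgs$, $\gg\in\Lg$ used elsewhere only yield, via the H\"older inequality, that $\ff(X_0)\gg(X_1)\in L^1(R)$, which guarantees that $P$ is a finite measure but not that $H(P|R)<\infty$. The extra logarithmic room in conditions (i) and (ii) is precisely what is needed to upgrade $L^1$ to $L\log L$, and the virtue of the conjugate pair $(\gamma_f,\gamma_g)$ (equivalently of Lemma \ref{res-12}) is that it converts the product of two \emph{dependent} endpoint functionals into a sum of two single-endpoint functionals, each controlled by the common marginal $m$.
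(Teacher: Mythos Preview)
Your proof is correct and follows exactly the route the paper intends: the paper presents Theorem \ref{res-08} without an explicit proof, introducing it with ``Gathering our last results leads us to the following statement,'' meaning it is to be read as the combination of Lemma \ref{res-12} (for condition (ii)), the pointwise inequality recorded after \eqref{eq-30} (for condition (i)), and Proposition \ref{res-13}. You have simply written out the details the paper leaves implicit, including the reduction of $H(P|R)<\infty$ to the $L\log L$ bound on $\ff(X_0)\gg(X_1)$ and the decoupling via $R_0=R_1=m$; your parenthetical about the Orlicz rescaling constant $a_o$ is also a legitimate point that the paper glosses over.
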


As  particular cases of condition (i) above, we have
$\ff\in L^\infty(m),\gg\in\LlogL$
and $\ff\in \LlogL,\gg\in L^\infty(m).$ Condition (ii) is a slight improvement of condition (i) with $\chi(x)=x^2.$

\subsection*{The stochastic derivative of $P$}

Let us start saying some words about the carr\'e du champ operator
$\GR$ of a Markov process $P.$ It is a general result of the
theory of stochastic processes that the product of two real
semimartingales is still a semimartingale. More precisely, if $Y$
and $Z$ are semimartingales, then $$YZ=YZ_0+\int Y_-dZ+\int
Z_-dY+[Y,Z]$$ where $[Y,Z]_t$ is the limit along refining finite
partitions of the time interval by means of stopping times: $0\le
T_1\le\cdots\le T_k=1,$ of the cross variation
    $\sum_{i} (Y_{T_{i+1}\wedge t}-Y_{T_i\wedge t})(Z_{T_{i+1}\wedge t}-Z_{T_i\wedge t}).$
It is a remarkable result that $[Y,Z]$ is again a semimartingale.
Its compensator is denoted by $\langle Y,Z\rangle,$ this means
that
$$
    [Y,Z]=\langle Y,Z\rangle +M^{Y,Z}
$$
where $\langle Y,Z\rangle$ is a predictable bounded variation
process and $M^{Y,Z}$ is a local martingale. Nevertheless, the
product of two \emph{nice} semimartingales might not be nice
anymore. Let $Y$ and $Z$ be nice. Clearly, the stochastic
integrals $\int Y_-dZ$ and $\int Z_-dY$ are nice so that $YZ$ is
nice if and only if $\langle Y,Z\rangle$ is absolutely continuous.

\begin{definition}[Carr\'e du champ operator]
Let $u$ and $v$ be two measurable real functions on $\iX.$
Going back to the canonical process $X$ on $\OO,$ suppose that the
processes $u(X)=(u_t(X_t))_{t\in\ii}$ and
$v(X)=(v_t(X_t))_{t\in\ii}$ are $P$-semimartingales such that
$\langle u(X),v(X)\rangle$ is absolutely continuous $P\as$ Then,
we say that the couple of functions $(u,v)$ is in the domain
$\dom\GP$ of the carr\'e du champ operator $\GP$ which is defined
by
\begin{equation*}
  d\langle u(X),v(X)\rangle_t=:  \GR(u,v)(t,\lg Xt)\,dt,
  \quad P\as
\end{equation*}
This identity determines the function
$(t,x)\in\iX\mapsto\GP(u,v)(t,x)\in\RR,$ $dtP_t(dx)$-almost
everywhere.
\end{definition}
As a direct consequence of this definition, we obtain the following result which is often used as a definition of $\GP.$

\begin{proposition}\label{res-11}
Let $u$ and $v$ be two continuous functions on $\iX$ such that $u, v$ and their product $uv$ belong to $\dom \LP .$ Then, $(u,v)\in\dom\GP$ and
\begin{equation*}
\GP(u,v)=\LP (uv)-u \LP v-v \LP u.
\end{equation*}
\end{proposition}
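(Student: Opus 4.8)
The plan is to exploit the integration-by-parts formula for semimartingales together with the definition of $\GP$ as the density of the predictable bracket $\langle u(X),v(X)\rangle$. First I would observe that since $u,v$ and $uv$ are all in $\dom\LP$, the three processes $u(t,X_t)$, $v(t,X_t)$ and $(uv)(t,X_t)$ are nice $P$-semimartingales: each decomposes as a $\II0t\LP(\cdot)(s,X_s)\,ds$ part plus a local $P$-martingale, by Definition \ref{def-05}. The continuity hypothesis on $u$ and $v$ is what lets me identify $(uv)(t,X_t)$ with the pointwise product $u(t,X_t)\,v(t,X_t)$ of the two semimartingales, so that the classical product rule applies directly.

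Next I would write down integration by parts. For the two semimartingales $Y_t:=u(t,X_t)$ and $Z_t:=v(t,X_t)$ one has
$$
Y_tZ_t=Y_0Z_0+\int_0^t Y_{s^-}\,dZ_s+\int_0^t Z_{s^-}\,dY_s+[Y,Z]_t,\quad P\as
$$
The two stochastic integrals $\int Y_-\,dZ$ and $\int Z_-\,dY$ are local $P$-martingales, because $Y$ and $Z$ each have a local-martingale part and a bounded-variation part whose contribution I will handle through the $\LP$ terms. The key move is to replace $[Y,Z]$ by its compensator: write $[Y,Z]=\langle Y,Z\rangle+M^{Y,Z}$ with $M^{Y,Z}$ a local $P$-martingale. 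Since $u,v,uv\in\dom\LP$, the process $(uv)(t,X_t)=Y_tZ_t$ is a nice $P$-semimartingale, and comparing its Doob–Meyer decomposition with the one obtained from the product rule shows that its absolutely continuous bounded-variation part must be
$$
\int_0^t\LP(uv)(s,X_s)\,ds
= \int_0^t\big[Y_{s}\,\LP v(s,X_s)+Z_{s}\,\LP u(s,X_s)\big]\,ds+\langle Y,Z\rangle_t,\quad P\as
$$
In particular $\langle Y,Z\rangle$ is absolutely continuous, so $(u,v)\in\dom\GP$, and by the defining relation $d\langle Y,Z\rangle_t=\GP(u,v)(t,\lg Xt)\,dt$ I may read off
$$
\GP(u,v)(t,X_t)=\LP(uv)(t,X_t)-u_t(X_t)\,\LP v(t,X_t)-v_t(X_t)\,\LP u(t,X_t),\quad dtP_t(dx)\ae
$$
which is exactly the claimed identity (note the typographical $\GR$ in the statement is $\GP$).

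The main obstacle I anticipate is the bookkeeping at jump times and the justification that the bounded-variation parts match up $dtP_t(dx)\ae$ rather than merely $P\as$. Because $Y_{s^-}\neq Y_s$ at jumps, one must be careful that $\int Y_{s^-}\,dZ_s$ contributes only to the martingale part and that the drift coefficients $\LP u$, $\LP v$ appearing in the product rule are evaluated consistently (the definition of $\GP$ uses $\lg Xt$, which is harmless since $dtP_t(dx)\ae$ ignores the countably many fixed jump times, and Remark \ref{rem-02}-(f) suggests $P$ has no fixed discontinuities anyway). Uniqueness of the Doob–Meyer decomposition of the special semimartingale $Y Z$, already invoked in Remark \ref{rem-02}-(e), is what pins down the absolutely continuous drift and hence forces $\langle Y,Z\rangle$ to be absolutely continuous; this is the step that both establishes $(u,v)\in\dom\GP$ and yields the formula.
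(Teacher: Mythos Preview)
Your proposal is correct and follows essentially the same route as the paper: both arguments write the three nice-semimartingale decompositions coming from $u,v,uv\in\dom\LP$, apply the integration-by-parts formula $d(YZ)=Y_-\,dZ+Z_-\,dY+d[Y,Z]$, and then invoke uniqueness of the Doob--Meyer decomposition of the special semimartingale $YZ$ to identify $\langle Y,Z\rangle$ as the absolutely continuous process with density $\LP(uv)-u\LP v-v\LP u$. Your phrasing that ``$\int Y_-\,dZ$ and $\int Z_-\,dY$ are local $P$-martingales'' is literally false (they contain the drift contributions $\int Y_-\LP v\,ds$ and $\int Z_-\LP u\,ds$), but you immediately correct yourself, and the displayed identity you arrive at is right; also, there is no $\GR$ typo in the statement.
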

\begin{proof}
We denote $U_t=u(t,X_t)$ and $V_t=v(t,X_t).$ By hypothesis, we have
$dU_t=\LP u(t,X_t)\,dt+dM^u_t,$ $dV_t=\LP v(t,X_t)\,dt+dM^v_t$ and $d(UV)_t=\LP(uv)(t,X_t)\,dt+dM^{uv}_t$ where $M$ stands for any local $R$-martingale. Therefore,
\begin{eqnarray*}
d[U,V]_t
	&=& d(UV)_t-U_tdV_t-V_tdU_t\\
	&=& [\LP(uv)-u\LP(v)-v\LP(u)](t,\lg Xt)\,dt +dM_t
\end{eqnarray*}
with $dM_t=dM^{uv}_t-U_tdM^v_t-V_tdM^u_t.$ Hence, $d\langle U,V\rangle_t=[\LP(uv)-u\LP v-v\LP u](t,\lg Xt)\,dt$, which is the announced result.
\end{proof}

There are no tractable general conditions on $P$ which imply that
$\langle u(X),v(X)\rangle$ is absolutely continuous $P\as$
whenever $u,v\in\dom\LP .$ Counterexamples are known, see
\cite{Moko89}; $u,v\in\dom\LP $ doesn't imply in general
that $(u,v)\in\dom\GP.$ Some additional assumptions are needed.

\begin{theorem}\label{res-09}
Let the $h$-process $P$ and the function $g_t(x)$ be defined by \eqref{eq-10} and  \eqref{eq-11}.
Let the hypotheses of Theorem \ref{res-06} and Proposition \ref{res-13} be satisfied:
\begin{enumerate}[(i)]
\item
$R\in\PO$ is a stationary Markov process with invariant law $m=R_t\in\PX$ for all
$t\in\ii;$
\item
$\gamma$ is a Young function which satisfies \eqref{eq-05} and $\gamma^*$ is its convex conjugate;
\item
$V$ is a measurable function on $\iX$ which is
bounded below and is such that $\{\gamma^*( V_t);t\in\ii\}$ is uniformly
integrable in $L^1(m);$
\item
$\gg$ is a nonnegative function on $\XX$ 
in $\Lg.$
\end{enumerate}
We also assume that $\ff$ and $\gg$ satisfy the hypotheses of Theorem \ref{res-08} to insure that $H(P|R)<\infty.$
\\
 Then, 
$\UR\subset\dom \LP\subset\dom L^P$ and for all
$u\in\UR$ which satisfies for almost all $t\in[0,1)$ and $m$-almost all $x,$
\begin{equation}\label{eq-23}
    \sup_{s\in[t,t+h_o]}
E_R(|\LR u_s|^p\mid X_t=x)<\infty,\quad\textrm{for some }h_o>0 \textrm{ and }p>1,
\end{equation}
we have
 $$(g,u)\in\dom\GR$$ and
\begin{equation*}
  \LP u(t,x)=  L^Pu(t,x)=L^Ru(t,x) +\frac{\GR(g,u)(t,x)}{g_t(x)},
    \quad  dtP_t(dx)\ae
\end{equation*}
where no division by zero occurs since $g_t>0,$ $P_t\as$
\end{theorem}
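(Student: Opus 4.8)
The plan is to reduce the whole statement to a computation of the stochastic derivative $L^Pu$, carried out under the reference measure $R$ by means of the change of measure \eqref{eq-14f}, and then to read off the carr\'e du champ term from an integration by parts. First I would dispose of the inclusions. Since $\ff,\gg$ satisfy the hypotheses of Theorem \ref{res-08}, we have $H(P|R)<\infty$; Proposition \ref{res-13} then gives $\UR\subset\dom\LP$, and Theorem \ref{res-02} gives $\dom\LP\subset\dom L^P$ together with $\LP u=L^Pu$ on this domain. Fixing $u\in\UR$ satisfying \eqref{eq-23}, and using that $u\in\dom\LR$, Theorem \ref{res-02} applied to $R$ shows $u\in\dom L^R$ with $L^Ru=\LR u$. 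It therefore remains to compute $L^Pu(t,x)=\Limh\frac1h E_P[u(t+h,X_{t+h})-u(t,x)\mid X_t=x]$ and to establish $(g,u)\in\dom\GR$.

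Next I would rewrite the $P$-conditional expectation under $R$. Writing $U_s:=u(s,X_s)$ and introducing, for $s\ge t$, the process $\widehat G_s:=\exp\big(-\II ts V_r(X_r)\,dr\big)g_s(X_s)$, formula \eqref{eq-14f} shows that, conditionally on $X_t=x$, the density of $P$ with respect to $R$ on $[t,s]$ is $\widehat G_s/g_t(x)$; moreover the Feynman-Kac semigroup property \eqref{eq-07} says precisely that $\widehat G$ is an $R$-martingale on $[t,1]$ with $E_R[\widehat G_s\mid X_t=x]=g_t(x)$, its drift being compensated because $\LR g=Vg$ by Theorem \ref{res-06}. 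Splitting $\widehat G_{t+h}=g_t(x)+(\widehat G_{t+h}-\widehat G_t)$ yields, conditionally on $X_t=x$,
\[
 E_P[U_{t+h}-U_t\mid X_t=x]=E_R[U_{t+h}-U_t\mid X_t=x]+\frac{1}{g_t(x)}\,E_R\big[(U_{t+h}-U_t)(\widehat G_{t+h}-\widehat G_t)\mid X_t=x\big].
\]
Divided by $h$, the first summand tends to $L^Ru(t,x)=\LR u(t,x)$. All these manipulations are to be localized by an integration time $\tau$ of $u$ in the sense of Section \ref{sec-SD}, so that the stopped objects are genuinely integrable.

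The heart of the matter is the second summand. An integration by parts gives
\[
 (U_{t+h}-U_t)(\widehat G_{t+h}-\widehat G_t)=\II t{t+h}(U_{s^-}-U_t)\,d\widehat G_s+\II t{t+h}(\widehat G_{s^-}-\widehat G_t)\,dU_s+\big([U,\widehat G]_{t+h}-[U,\widehat G]_t\big).
\]
Because $\widehat G$ is an $R$-martingale, the first integral has zero conditional expectation; writing $dU_s=\LR u(s,X_s)\,ds+dM^{u}_s$ with $M^u$ a local $R$-martingale, the martingale part of the second integral also drops out, while its drift part $\frac1h\II t{t+h}(\widehat G_{s^-}-\widehat G_t)\LR u(s,X_s)\,ds$ tends to $0$: this is where \eqref{eq-23} is used, through H\"older's inequality with exponent $p>1$ and the $L^{p'}(R)$-right-continuity of $\widehat G$ at $t$, the latter resting on the uniform-integrability bounds of Lemma \ref{res-04}. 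Hence $\frac1h E_R[(U_{t+h}-U_t)(\widehat G_{t+h}-\widehat G_t)\mid X_t=x]$ has the same limit as the normalized compensator increment $\frac1h E_R[\langle U,\widehat G\rangle_{t+h}-\langle U,\widehat G\rangle_t\mid X_t=x]$. The left-hand side of the displayed identity equals $g_t(x)\,(L^Pu-L^Ru)(t,x)$, which exists for $dtP_t(dx)$-almost every $(t,x)$ by the first paragraph; so the compensator increments converge, and an absolute-continuity criterion in the spirit of the Claim in the proof of Proposition \ref{res-03}, applied to $s\mapsto E_R[\langle U,\widehat G\rangle_s\mid \XXX0r]$, shows $\langle U,\widehat G\rangle$ to be absolutely continuous. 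Since $\widehat G$ differs from $g(X)$ only by the absolutely continuous factor $\exp(-\II ts V_r(X_r)\,dr)$, which equals $1$ at $s=t$, this forces $\langle u(X),g(X)\rangle\ll dt$, i.e. $(g,u)\in\dom\GR$, with time-$t$ density $\GR(g,u)(t,x)$. Dividing by $g_t(x)$ produces the announced formula.

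The main obstacle is the absolute continuity $(g,u)\in\dom\GR$: as emphasized just before the theorem, the product of two nice semimartingales need not be nice, so this cannot be taken for granted. The way out is not to prove it in isolation but to exploit the a priori existence of $L^Pu$ (a consequence of the entropy bound) to force the compensator increments to converge, and then to upgrade this to an $L^1(\overline{P})$-statement rather than a merely pointwise one, so that the absolute-continuity criterion applies; the uniform-integrability estimates of Lemma \ref{res-05} and hypothesis \eqref{eq-23} are exactly what make this upgrade possible. The remaining difficulty is purely organisational: choosing the localizing integration times so that every conditional expectation and every stochastic integral above is legitimate.
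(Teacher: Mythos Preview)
Your proposal is correct and follows the same overall architecture as the paper: use \eqref{eq-14f} to pass to an $R$-expectation, integrate by parts on the product of increments, kill the drift contributions (this is where \eqref{eq-23} enters), and then exploit the \emph{a priori} existence of $L^Pu$ coming from $H(P|R)<\infty$ to force the compensator $\langle G,U\rangle$ to be absolutely continuous.

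The one genuine difference is your choice to work with the conditional Radon--Nikodym martingale $\widehat G_s=\exp\big(-\int_t^s V_r\,dr\big)g_s(X_s)$ rather than with $G_s=g_s(X_s)$ itself. The paper instead keeps $G$ and the exponential factor separate, expanding $g_t(x)^{-1}D^hU_t\,e^{-D^hF_t}G_{t+h}$ algebraically; this produces an extra cross term $[e^{-D^hF_t}-1]\,[\cdots]$ that has to be shown $o(h)$ via H\"older in the Orlicz pair $(L^\gamma,L^{\gamma^*})$, and the integration by parts on $D^hG_t\,D^hU_t$ then yields three pieces $A_h,B_h,C_h$, with $B_h$ (the $V_rG_r$ drift of $G$) controlled through Lemma \ref{res-05}. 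Your packaging is more economical: because $\widehat G$ is a genuine $R$-martingale by \eqref{eq-07}, the first stochastic integral and the drift of $\widehat G$ both disappear outright, so you avoid the separate $[e^{-D^hF_t}-1]$ estimate and the $B_h$ estimate entirely. The small price is the final translation step $\langle U,\widehat G\rangle\ll dt\Rightarrow\langle U,G\rangle\ll dt$, which is immediate since $d\widehat G_s=e^{-\int_t^sV}\,dG_s-V_s\widehat G_s\,ds$ and the exponential factor equals $1$ at $s=t$. Both routes leave the passage from ``the normalized compensator increments converge'' to ``$(g,u)\in\dom\GR$'' at the same level of detail; you are right that the Claim in the proof of Proposition \ref{res-03} is the natural tool, and the paper's own sentence ``Since this is true for all $t$ and $x$, this shows that $(g,u)$ belongs to the domain of $\GR$'' is no more explicit than yours.
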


\proof Let $u$ be in $\UR,$ then we know by Theorems \ref{res-02}
and \ref{res-08} that
\begin{equation}\label{eq-22}
    u\in\dom \LP \subset\dom L^P.
\end{equation}
With \eqref{eq-14f} we see that for all $0\le t< t+h\le1$ and $P_t$-almost all $x,$
\begin{multline*}
    E_P(u_{t+h}(X_{t+h})-u_t(x)\mid X_t=x)
   \\ =E_R\left( g_t(x)^{-1}[u_{t+h}(X_{t+h})-u_t(x)]\exp\left(-\II
    t{t+h}V_r(X_r)\,dr\right)g_{t+h}(X_{t+h})\mid X_t=x\right)
\end{multline*}
with $g_t(x)>0,$ $P_t(dx)\as$ We write for simplicity
$u_s(X_s)=U_s,$ $g_s(X_s)=G_s,$ $V_s=V_s(X_s),$
$D^hU_t=u_{t+h}(X_{t+h})-u_t(x),$ $D^hG_t=g_{t+h}(X_{t+h})-g_t(x)$
and $D^hF_t=\II t{t+h}V_r(X_r)\,dr.$ The inner term in the right-hand side
expectation is
\begin{eqnarray}
    &&g_t(x)^{-1}D^hU_t e^{-D^hF_t}G_{t+h}\nonumber\\
    &=&D^hU_t(1+[e^{-D^hF_t}-1])(1+D^hG_t/ g_t(x))\nonumber\\
    &=&D^hU_t+D^hU_tD^hG_t/ g_t(x)+[e^{-D^hF_t}-1][D^hU_t+D^hU_tD^hG_t/
    g_t(x)].\label{eq-20}
\end{eqnarray}
As it is assumed that $u\in\dom\LR ,$ $(U_r)_{r\in\ii}$
is a $R$-semimartingale. Since its sample paths are in $\DD,$ they
are bounded $R\as$ and the sequence of stopping times
$\inf\{r\in\ii; |U_r|+G_r\ge k\}$ converges $R\as$ to infinity.
Therefore, we can assume without loss of generality that $U$ and
$G$ are bounded without introducing integration times. 
\\
The contribution of the first term $D^hU_t$ of \eqref{eq-20} is
well understood. Since $u\in\dom L^R,$ we have
\begin{equation}\label{eq-19}
    \Limh \frac 1h E_R^x D^hu_t=L^Ru(t,x)
\end{equation}
where we denote $E_R^x=E_R(\cdot\mid X_t=x)$ for simplicity.
\\
Let us control, the last term of \eqref{eq-20}. As $G$ and $U$ can
be assumed to be bounded, $DU_t$ and $DG_tDU_t$ are also bounded.
Hence, $DU_t+DG_tDU_t/g_t(x)$ is bounded and by right continuity
of the sample paths, it tends to zero $R\as.$ By dominated
convergence, we obtain
$$
    \Limh E_R^x \gamma(DU_t+DG_tDU_t/g_t(x))=0.
$$
On the other hand,
    $|e^{-D^hF_t}-1|
    = |[e^{-D^hF_t}-1]/(-D^hF_t)|\, |D^hF_t|
    \le e^{\lo h}|\II t{t+h}V_r\,dr|
    $
and
\begin{multline*}
   E_R^x \gamma^*\left(\frac 1h [e^{-D^hF_t}-1]\right)
    \le c_{\gamma^*,\lo} E_R^x \gamma^*\left(\frac 1h\II t{t+h}V_r\,dr\right)
   \le c_{\gamma^*,\lo} E_R^x \frac 1h\II t{t+h} \gamma^*( V_r)\,dr\\
    = c_{\gamma^*,\lo} \frac 1h\II t{t+h}E_R^x \gamma^*( V_r)\,dr
    \le c_{\gamma^*,\lo}\sup_{r\in\ii}E_R^x \gamma^*( V_r)<\infty.
\end{multline*}
It follows with H\"older's inequality that
\begin{multline}\label{eq-21}
   \Limh \frac 1h E_R^x \left|[e^{-D^hF_t}-1][D^hU_t+D^hU_tD^hG_t/
   g_t(x)]\right|
  \\  \le 2\Limh \big\|\frac 1h [e^{-D^hF_t}-1]\big\|_{L^{\gamma^*}(R^x)}  \big\|DU_t+DG_tDU_t/g_t(x)\big\|_{L^{\gamma}(R^x)} 
    =0.
\end{multline}

Let us look at $D^hU_tD^hG_t$ coming from the second term of
\eqref{eq-20}. By means of basic stochastic calculus we arrive at
\begin{equation*}
    DG_tDU_t
    =\II t{t+h} (G_r-G_t)\,dU_r+\II t{t+h}(U_r-U_t)\, dG_r
        +[G,U]_{t+h}-[G,U]_t.
\end{equation*}
With $dU_r=\LR u_r\,dr+dM^u_r$ and
$dG_r=\LR g_r\,dr+dM^g_r=V_rG_r\,dr+dM^g_r$ where we
relied on Theorem \ref{res-06} in last equality, taking the
expectation leads us to
\begin{multline*}
  E_R^x( DG_tDU_t)
  \\  =\underbrace{E_R^x\II t{t+h} (G_r-G_t)\,\LR u_r\,dr}_{A_h}+\underbrace{E_R^x\II
  t{t+h}(U_r-U_t)\,V_rG_r\,dr}_{B_h}
        +\underbrace{E_R^x([ G,U]_{t+h}-[ G,U]_t)}_{C_h}.
\end{multline*}
Let us control $A_h, B_h$ and $C_h$. By H\"older's  inequality with $1/p+1/q$ and $q\ge1,$
\begin{equation*}
    A_h\le \left(E_R^x\II t{t+h} |G_r-G_t|^q\,dr\right)^{1/q} \left( E_R^x\II
    t{t+h}|\LR u_r|^p\,dr\right)^{1/p}.
\end{equation*}
But $ E_R^x\II t{t+h} |G_r-G_t|^q\,dr=o(h)$ since
$\{G_r;r\in\ii\}$ is bounded and $G$ is right
continuous. We also obtain,
    $E_R^x\II t{t+h}|\LR u_r|^p\,dr=\II t{t+h}E_R^x|\LR u_r|^p\,dr=O(h),$
under the condition that \eqref{eq-23} holds.
It follows that $A_h=o(h)^{1/q}O(h)^{1/p}=o(h).$
\\
Let us control $B_h.$ We can take $U$ bounded and we already know
by Lemma \ref{res-05} that $\{V_tG_t;t\in\ii\}$ is uniformly
integrable. Since $U$ is right continuous, it follows that
$B_h=o(h).$
\\
We know by \eqref{eq-22} that the limit
\begin{multline*}
\Limh \frac1h E_R^x \big\{D^hU_t+D^hU_tD^hG_t/ g_t(x)\\+[e^{-D^hF_t}-1][D^hU_t+D^hU_tD^hG_t/
    g_t(x)]\big\}=: L^Pu(t,x)
\end{multline*}
exists. We have also shown \eqref{eq-19} and \eqref{eq-21} which
imply that, $dtP_t(dx)\ae:$
\begin{eqnarray*}
     L^Pu(t,x)&=&L^Ru(t,x)+g_t(x)^{-1} \Limh \frac1h C_h\\
    &=& L^Ru(t,x)+ g_t(x)^{-1} \Limh \frac1h E_R^x([ G,U]_{t+h}-[ G,U]_t)\\
   &=& L^Ru(t,x)+ g_t(x)^{-1} \Limh \frac1h E_R^x(\langle G,U\rangle_{t+h}-\langle G,U\rangle_t)
\end{eqnarray*}
and in particular that the limit $\Limh \frac1h E_R^x(\langle
G,U\rangle_{t+h}-\langle G,U\rangle_t)$ exists. Since this is true
for all $t$ and $x,$ this shows that $(g,u)$ belongs to the domain
of $\GR.$ We conclude noticing that by definition $\Limh \frac1h
E_R^x(\langle G,U\rangle_{t+h}-\langle
G,U\rangle_t)=\GR(g,u)(t,x).$
\endproof

We note for future use the following result.
\begin{corollary}\label{res-19}
Under the assumptions of Theorem \ref{res-09}, we have
\begin{equation}\label{eq-39}
\GR(g,u)(t,x)=\Lim h\frac1h E_R\Big([g_{t+h}(X_{t+h})-g_t(x)][u_{t+h}(X_{t+h})-u_t(x)]|X_t=x\Big),\ dtm(dx)\ae
\end{equation}
The product $ug$ is in $\dom\LR$ and  $\GR(g,u)=\LR(gu)-g\LR u-u\LR g.$
\end{corollary}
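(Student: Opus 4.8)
The plan is to harvest both assertions from the computations already carried out in the proof of Theorem \ref{res-09}, the first one essentially for free and the second one through an integration by parts.

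For \eqref{eq-39}, I keep the notation $E_R^x=E_R(\ccdot\mid X_t=x)$, $D^hG_t=g_{t+h}(X_{t+h})-g_t(x)$ and $D^hU_t=u_{t+h}(X_{t+h})-u_t(x)$ of that proof. There the exact decomposition $E_R^x(D^hG_tD^hU_t)=A_h+B_h+C_h$ was established, together with the estimates $A_h=o(h)$ and $B_h=o(h)$ and the identification $\Limh \frac1h C_h=\Limh \frac1h E_R^x(\langle G,U\rangle_{t+h}-\langle G,U\rangle_t)=\GR(g,u)(t,x)$, valid $dtm(dx)\ae$ Dividing by $h$ and letting $h\downarrow0$ immediately yields $\Limh \frac1h E_R^x(D^hG_tD^hU_t)=\GR(g,u)(t,x)$, which is exactly \eqref{eq-39}. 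So this part demands no new argument: it is a mere rereading of bounds already obtained.

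For the second assertion I would invoke the integration by parts formula for semimartingales. Since $g,u\in\dom\LR$, the processes $G_t=g_t(X_t)$ and $U_t=u_t(X_t)$ are nice $R$-semimartingales, with $dU_s=\LR u(s,X_s)\,ds+dM^u_s$ and $dG_s=\LR g(s,X_s)\,ds+dM^g_s=V_s(X_s)G_s\,ds+dM^g_s$ (using Theorem \ref{res-06} for $\LR g$). The product rule gives $G_tU_t-G_0U_0=\int_{(0,t]}G_{s-}\,dU_s+\int_{(0,t]}U_{s-}\,dG_s+[G,U]_t$. In the first two integrals the drift parts are absolutely continuous, equal to $\int_0^t (g\LR u)(s,X_s)\,ds$ and $\int_0^t (u\LR g)(s,X_s)\,ds$ respectively (here $G_{s-}=G_s$ and $U_{s-}=U_s$ for Lebesgue-almost every $s$ since the paths are \cadlag), while the remaining pieces are local $R$-martingales. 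Finally, because Theorem \ref{res-09} has already shown $(g,u)\in\dom\GR$, the sharp bracket $\langle G,U\rangle$ is absolutely continuous with density $\GR(g,u)$, so $[G,U]=\langle G,U\rangle+M^{G,U}$ is itself a nice semimartingale. Summing the three absolutely continuous drifts, $GU$ is a nice $R$-semimartingale whose drift density at $(s,X_s)$ is $g\LR u+u\LR g+\GR(g,u)$. By Definition \ref{def-05} this says precisely that $gu\in\dom\LR$ with $\LR(gu)=g\LR u+u\LR g+\GR(g,u)$, i.e.\ $\GR(g,u)=\LR(gu)-g\LR u-u\LR g$.

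The only genuine work lies in checking that this is a legitimate \emph{nice} semimartingale decomposition, namely that each drift term is $dt$-integrable along the paths $R\as$ so that $\LR(gu)$ is well defined. This is where I would use that $G$ and $U$ have \cadlag, hence $R\as$ bounded, sample paths on $\ii$, together with $\II01|\LR u(s,X_s)|\,ds<\infty$ (from $u\in\dom\LR$), $\II01 |V_s(X_s)G_s|\,ds<\infty$ (Lemma \ref{res-05}, part (2)), and the finite variation of $\langle G,U\rangle$; localizing by $\inf\{s\in\ii; |U_s|+G_s\ge k\}$ as in the proof of Theorem \ref{res-09} disposes of the unbounded case and supplies the localizing sequence for the local martingale part. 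Conceptually this second assertion is just the converse reading of Proposition \ref{res-11}: there one assumes $gu\in\dom\LR$ and deduces the absolute continuity of $\langle G,U\rangle$, whereas here that absolute continuity is furnished by Theorem \ref{res-09} and is used to deduce $gu\in\dom\LR$; the integration by parts identity is the common engine, so no obstacle beyond the integrability bookkeeping is expected.
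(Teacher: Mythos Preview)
Your proof is correct and matches the paper's own argument closely. For \eqref{eq-39} you do exactly what the paper does: read it off from the decomposition $E_R^x(D^hG_tD^hU_t)=A_h+B_h+C_h$ and the estimates $A_h,B_h=o(h)$ already obtained in the proof of Theorem \ref{res-09}. For the second assertion there is only a cosmetic difference: the paper appeals to the finite-difference identity $D(GU)=DG\,DU+U\,DG+G\,DU$ together with the three convergences supplied by $u,g\in\dom\LR$ and $(g,u)\in\dom\GR$, whereas you run the same product rule in its continuous-time form $d(GU)=G_-\,dU+U_-\,dG+d[G,U]$ and read off the nice-semimartingale decomposition directly; the two computations are equivalent and your version has the small advantage of exhibiting $gu\in\dom\LR$ without an implicit appeal to Proposition \ref{res-03}.
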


\begin{proof}
The identity \eqref{eq-39} has been proved during the previous proof of Theorem \ref{res-09}. Next assertion follows from $D(GU)=DGDU+UDG+GDU$ and the convergences which are implied by $u,g\in\dom\LR$ and $(g,u)\in\dom\GR.$
\end{proof}
 
\begin{remark} 
Let us also remark that applying Lemma \ref{res-07} to the quadratic variation $[u(X)],$ under the assumption  $H(P|R)<\infty$ we see that $d\langle u(X)\rangle^R_t\ll dt,$ $R\as$ implies that $d\langle u(X)\rangle^P_t\ll dt,$ $P\as$ It follows that $$\dom\GP\subset\dom\GR$$ in the sense that we consider $dtP_t(dx)\ae$-defined functions instead of $dtm(dx)\ae$-defined functions. 
\\
In the special case when $X$ is continuous $R\as$, we also have $\langle u(X)\rangle^P=\langle u(X)\rangle^R,$ $P\as,$ which implies that $\GP(u,v)(t,x)=\GR(u,v)(t,x),\ dtP_t(dx)\ae,$ $(u,v)\in\dom\GP.$
\end{remark}

\section{Continuous diffusion processes on $\Rd$}\label{sec-diffusion}

In this section we examplify the previous abstract results with simple continuous diffusion processes on $\Rd.$

\subsection*{The reference process $R$}

The reference process $R$ is the law of a Markov continuous diffusion process on the state space $\XX=\Rd$ which admits an invariant probability measure $m.$ To fix the ideas, we assume in the whole section that it is the solution of the stochastic differential equation (SDE)
\begin{equation*}
X_t=X_0+\II0t b(X_s)\,ds+\II0t \sigma(X_s)\,dW_s,
\quad t\in\ii
\end{equation*}
where $W$ is a $\Rd$-valued Wiener process, $b:\Rd\to\Rd$ and $\sigma:\Rd\to M_{d\times d}$ are locally Lipschitz functions which are respectively vector-valued and matrix-valued. We also assume that $R\as,$ $X$ doesn't explode on the time interval $\ii.$

\begin{result}\label{res-16}
Under these hypotheses on $R,$ it is known that 
  $R$ is the unique solution of the martingale problem $\MP(\mathcal{L},\mathcal{C};\mu_o)$ in the sense of Definition \ref{def-02} with the initial measure $\mu_o= m$ and the generator $\LR$ given for all $u\in \mathcal{C}=\CCc$ by
$$
	\LR u(t,x)=\partial_t u(t,x)+\sum_{i=1}^db_i(x)\partial_{x_i} u(t,x)+
	\frac 12\sum_{i,j=1}^d a_{ij}(x)\partial_{x_i} \partial_{x_j} u(t,x)
$$
where $(a_{ij})_{1\le i,j\le d}=a:=\sigma \sigma^*\in M_{d\times d}.$\\ We denote this martingale problem $\MP(b,a;m).$
\end{result}

\subsection*{Extended gradients}

We introduce the notion of extended gradient. Let $P$ be a solution to the martingale problem $\MP(b^P,a;P_0)$, for some drift vector field $b^P:\iR\to\Rd.$ A simple computation based on Proposition \ref{res-11} gives us
\begin{equation}\label{eq-24}
\GP(\varphi,v)(t,x)=\nabla \varphi_t(x)\cdot a(x)\nabla v(x),\ dtP_t(dx)\ae,\  \varphi\in\CCc, v\in \Cc.
\end{equation}
One proves the Cauchy-Schwarz type inequality
$$
	\left(\II std\langle A,B\rangle_r\right)^2
	\le \II std\langle A,A\rangle_r\II std\langle B,B\rangle_r, \quad 0\le s\le t\le1
$$
with the usual discriminent argument. Let us take $u,v$ in $\Cc$ and $\psi$ a measurable function on $\iX$ such that  $(\psi,u)$ and $(\psi,v)$ are in $\dom\GP$ and such that $\GP(v-u,v-u)=0.$ Then, the above Cauchy-Schwarz inequality implies that  $\GP(\psi_t,u)(t,x)=\GP(\psi_t,v)(t,x),$ $dtP_t(dx)\ae$ Consequently,  the linear operator $u\mapsto\GP(\psi,u)$ only depends on the equivalence class defined by $u\sim v \overset{\mathrm{def}}{\Leftrightarrow} \GP(v-u,v-u)=0, dtP_t(dx)\ae\Leftrightarrow a\cdot\nabla(v-u)=0, dtP_t(dx)\ae,$ and it follows that there exists some vector field $\beta$ on $\ii\times \Rd$ such that $\GP(\psi,\cdot)$ is represented by
\begin{equation}\label{eq-25}
\GP(\psi,v)(t,x)=\beta_t(x)\cdot a(x)\nabla v(x),\ dtP_t(dx)\ae, \quad v\in \mathcal{C}^2(\Rd).
\end{equation}
Moreover, up to $dtP_t(dx)$-a.e.\ equality, there is a unique such $\beta$ with its values in the range of $a.$

Comparing \eqref{eq-24} and \eqref{eq-25}, it is natural to introduce the following definition.
\begin{definition}[Extended gradient]
Let $\psi$ be a measurable function on $\iR$ such that for all $u\in \Cc,$ $(\psi,u)$ is in $\dom\GP.$ The unique vector field $\beta$ which satisfies \eqref{eq-25} and $\beta_t(x)\in \mathrm{Range}\,a(x)$ up to $dtP_t(dx)$-a.e.\ equality is denoted by
$
\beta=\nablat^P \psi
$
and it is called the $P$-extended gradient of $\psi$. 
\\
When no confusion can occur, we simply drop $P$ and write $\nablat^P \psi=\nablat\psi.$
\end{definition}

It is clear with our previous discussion that for any $u\in \Cc,$ $\nablat u$ is the orthogonal projection of $\nabla u$ on the range of the diffusion matrix $a.$ In particular, $\nablat u=\nabla u,$ $dtP_t(dx)\ae,$ when $a(x)$ is invertible for all $x\in\Rd$.

\subsection*{The martingale problem which is solved by $P$}

Now we consider the generalized $h$-process $P$.
We are going to see that $P$ solves a martingale problem $\MP(b+a \beta,a)$ and that the additional drift $\beta$  has the special form 
$$
\beta= \nablat^P \psi,\quad P\as
$$
with $\psi=\log g,$ i.e.
\begin{equation}\label{eq-36}
\psi(t,x):=\log E_R \left[\exp\Big(-\II t1 V_s(X_s)\,ds\Big)\gg(X_1)\mid X_t=x\right],\quad dtP_t(dx)\ae
\end{equation} 
which is well-defined $dtP_t(dx)\ae$ since $g(t,x)>0,$ $dtP_t(dx)\ae,$ but might not be defined $dtm(dx)\ae$ in general.

\begin{lemma}\label{res-14}
Assume that $R$ satisfies the hypotheses of Result \ref{res-16} and  $P$ defined by \eqref{eq-10} satisfies the hypotheses of Theorem \ref{res-09}. Then,
 for all $u\in\UR$ which verifies \eqref{eq-23},  $(\psi,u)$ is in $\dom\GP$ and
$$
\frac{\GR(g,u)}{g}(t,x)=\GP(\psi,u)(t,x),\quad dtP_t(dx)\ae
$$
\end{lemma}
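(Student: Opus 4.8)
The plan is to avoid differentiating the possibly non-regular function $\psi=\log g$ in the space variable, and instead to apply a \emph{scalar} It\^o formula to the real positive semimartingale $G_t=g_t(X_t)$, whose strict positivity under $P$ is the decisive ingredient.

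First I would record continuity. By Result \ref{res-16} the canonical process $X$ is continuous $R\as$, hence continuous $P\as$ as well since $P\ll R$. For the reference diffusion the Feynman--Kac process $G_t=g_t(X_t)$ (a semimartingale by Lemma \ref{res-04} and Theorem \ref{res-06}) is then continuous, and for the functions $u$ of the class $\UR$, which here are essentially the smooth ones of $\CCc$ (Remark \ref{rem-01}), the process $U_t=u_t(X_t)$ is a continuous semimartingale under both $R$ and $P$ (using $u\in\dom\LP$ from Theorem \ref{res-09}). For continuous semimartingales the covariation $[G,U]$ is a pathwise limit, hence invariant under the passage from $R$ to $P\ll R$, and it equals the predictable bracket; thus $\langle G,U\rangle^P=\langle G,U\rangle^R$, $P\as$, which is the continuous case of the remark following Corollary \ref{res-19}. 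As $(g,u)\in\dom\GR$ by Theorem \ref{res-09}, this yields $d\langle G,U\rangle^R_t=\GR(g,u)(t,X_t)\,dt$.

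The heart of the argument is then the chain rule. Under $P$ one has $g_t>0$, $P_t\as$ (Proposition \ref{res-10}), so $G>0$; combined with continuity this permits localization by the stopping times $\inf\{t;\ G_t\le1/n\ \textrm{or}\ G_t\ge n\}$, on which $G$ is bounded away from $0$ and $\infty$ and $\log$ is a $C^2$ function of $G$. Applying the scalar It\^o formula to $\log G=\psi(X)$, its local-martingale part is $\int G_s^{-1}\,dM^G_s$, and since only martingale parts contribute to brackets, $d\langle\psi(X),U\rangle^P_t=G_t^{-1}\,d\langle G,U\rangle^P_t$. Combining with the previous step gives $d\langle\psi(X),U\rangle^P_t=\GR(g,u)(t,X_t)\,g_t(X_t)^{-1}\,dt$, which is absolutely continuous $P\as$ Hence $(\psi,u)\in\dom\GP$ and, reading off the density, $\GP(\psi,u)(t,x)=\GR(g,u)(t,x)/g_t(x)$, $dtP_t(dx)\ae$, as claimed.

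The main obstacle is exactly the non-regularity of $g$: because $V$ need not be continuous, neither $g$ nor $\psi=\log g$ need be twice differentiable in $x$, so the multidimensional It\^o formula is unavailable for $\psi(t,X_t)$. What rescues the computation is that the positivity $g_t>0$, $P_t\as$, makes $G$ a strictly positive scalar continuous semimartingale to which the smooth scalar map $\log$ may be applied after localization, no matter how irregularly $g$ depends on the state variable.
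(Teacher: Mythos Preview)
Your proof is correct and follows essentially the same approach as the paper: apply the scalar It\^o formula to $\log G$ under $P$ (using positivity and continuity of $G$) to obtain $d\langle\psi(X),U\rangle_t=G_t^{-1}\,d\langle G,U\rangle_t$, and then read off the density using $(g,u)\in\dom\GR$ from Theorem~\ref{res-09}. You are simply more explicit than the paper about the localization away from zero and about the invariance $\langle G,U\rangle^P=\langle G,U\rangle^R$ for continuous semimartingales, while the paper justifies the continuity of $G$ via the continuity of the density process $Z_t=dP_{[0,t]}/dR_{[0,t]}$ rather than via the Brownian filtration directly.
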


\begin{proof}
Let us denote $Z_t=dP_{[0,t]}/dR_{[0,t]}$. As $Z$ admits a continuous version and $Z_t=\ff(X_0)\exp \left(-\II0tV_s(X_s)\,ds\right)G_t$ with $G_t:=g_t(X_t),$ $G$ also admits a continuous version.
Applying It\^o's formula to the continuous process $\psi_t(X_t)=\log G_t$, we obtain
\begin{equation}\label{eq-31}
d \psi_t(X_t)=\frac{dG_t}{ G_t}-\frac{1}{2}\frac{d\langle G\rangle_t}{ G_t^2}
\quad P\as
\end{equation}
We deduce from this with Theorem \ref{res-09} that for any $u\in\UR$ which verifies \eqref{eq-23},\\
$d\langle \psi(X),u(X)\rangle_t=d\langle G,u(X)\rangle_t/ G_t=[\GR(g,u)/g](t,X_t)\,dt,$ $P\as$ This completes the proof of the lemma.
\end{proof}

\begin{theorem}\label{res-18}
Assume that $R$ satisfies the hypotheses of Result \ref{res-16} and let $P$ be the generalized $h$-process which is  defined by \eqref{eq-10}. Assume also that $\ff,\gg$ and $V$ satisty the hypotheses of Theorem \ref{res-09}.
 \\
Then $P$ is the unique solution in $\left\{Q\in\PO;H(Q|R)<\infty\right\}$ of $$P\in\MP(b+a\nablat^P \psi,a;P_0)$$ with $P_0=\ff g_0\,m$ and where the function $$\psi(t,x):=\log g(t,x) =\log E_R \left[\exp \left(-\II t1 V_s(X_s)\,ds\right)\gg(X_1)\mid X_t=x\right],\quad dtP_t(dx)\ae$$ is defined by \eqref{eq-11} and \eqref{eq-36}. 
\end{theorem}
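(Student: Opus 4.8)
The plan is to prove the two assertions — that $P$ solves the stated martingale problem and that it is its \emph{unique} finite-entropy solution — separately. The existence part is assembled almost mechanically from Theorems \ref{res-09}, \ref{res-02} and Lemma \ref{res-14}, while the uniqueness part rests on Girsanov's theory made legitimate by the hypothesis $H(P|R)<\infty$.

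\emph{Existence.} Since the martingale problem of Result \ref{res-16} is stated on the class $\mathcal{C}=\CCc$, it suffices to compute $\LP u$ for $u\in\CCc$. First I would check that such a $u$ is admissible for Theorem \ref{res-09}: as $X$ is a continuous, hence nice, $R$-semimartingale with values in $\Rd$, Remark \ref{rem-01} gives $\CCc\subset\UR$; and because $u$ is compactly supported while $b$ and $a$ are continuous, $\LR u=\partial_t u+b\cdot\nabla u+\frac12 a:\nabla^2 u$ is bounded, so the integrability requirement \eqref{eq-23} holds for every $p>1$. Theorem \ref{res-09} then yields $u\in\dom\LP$ and $\LP u(t,x)=L^Ru(t,x)+\GR(g,u)(t,x)/g_t(x)$, $dtP_t(dx)\ae$ Now I rewrite each summand: by Theorem \ref{res-02}, $L^Ru=\LR u$, the explicit operator of Result \ref{res-16}; by Lemma \ref{res-14} together with the defining identity \eqref{eq-25} of the $P$-extended gradient, $\GR(g,u)/g=\GP(\psi,u)=(a\nablat^P\psi)\cdot\nabla u$. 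Adding the two contributions gives
\[
\LP u(t,x)=\partial_t u+(b+a\nablat^P\psi)\cdot\nabla u+\tfrac12 a:\nabla^2 u,
\]
which is precisely the generator of $\MP(b+a\nablat^P\psi,a)$ applied to $u$, so that $u(t,X_t)-u(0,X_0)-\int_0^t[\cdots]\,ds$ is a local $P$-martingale for every $u\in\CCc$. For the initial law, Proposition \ref{res-10}(2) at $t=0$ gives $dP_0/dm=f_0g_0=\ff g_0$ — the factor $f_0$ reducing to $\ff$ because \eqref{eq-12} at $t=0$ is just the conditional expectation of the already $\sigma(X_0)$-measurable $\ff(X_0)$ — so $P_0=\ff g_0\,m$ and indeed $P\in\MP(b+a\nablat^P\psi,a;P_0)$.

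\emph{Uniqueness.} Here I regard $\widetilde b:=b+a\nablat^P\psi$ as a \emph{fixed} measurable drift field, determined once $P$ is known, and let $Q\in\PO$ be any solution of $\MP(\widetilde b,a;P_0)$ with $H(Q|R)<\infty$. Finite entropy forces $Q\ll R$, and Lemma \ref{res-07} keeps $X$ a nice $Q$-semimartingale. Reading off the martingale problem on $\CCc$ under both $R$ and $Q$ identifies the drift of $X$ under $Q$ as $\widetilde b$; subtracting the drift $b$ carried under $R$ shows that the Girsanov additional drift of $Q$ relative to $R$ equals $a\nablat^P\psi$, exactly that of $P$. Sharing this additional drift and the common initial law $P_0$, the densities $dQ/dR$ and $dP/dR$ are then given by one and the same exponential Girsanov functional
\[
\frac{dP_0}{dm}(X_0)\,\exp\Big(\int_0^1\nablat^P\psi\cdot dM^R-\tfrac12\int_0^1\nablat^P\psi\cdot a\,\nablat^P\psi\,dt\Big),
\]
whence $Q=P$.

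\emph{Main obstacle.} The delicate point is the uniqueness step. Because $\nablat^P\psi$ may be a highly irregular field — this is the whole thrust of the non-regularity of $V$ and of $\psi=\log g$ stressed in the introduction — the classical martingale-problem uniqueness criteria and the usual Novikov-type conditions are not available. The finite-entropy hypothesis is precisely what repairs this: it supplies the integrability that makes the stochastic integral $\int\nablat^P\psi\cdot dM^R$ and its stochastic exponential well defined and, through Lemma \ref{res-07}, turns $X$ into a nice $Q$-semimartingale. The crux is therefore to justify that Girsanov's representation holds under $H(Q|R)<\infty$ alone, and that matching additional drift together with matching initial law pins down the density uniquely. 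By contrast, once Theorems \ref{res-09}, \ref{res-02} and Lemma \ref{res-14} are in hand, the existence part is essentially bookkeeping.
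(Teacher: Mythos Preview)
Your proof is correct and follows essentially the same route as the paper: for existence you take $\mathcal{C}=\CCc$, verify \eqref{eq-23} via the boundedness of $\LR u$, and combine Theorem \ref{res-09} with Lemma \ref{res-14} and \eqref{eq-13}; for uniqueness you invoke Girsanov's theory under the finite-entropy hypothesis. The only difference is that the paper dispatches uniqueness in one sentence by citing \cite{Leo11a} and \cite{JaShi87}, whereas you sketch the Girsanov density argument explicitly --- a welcome elaboration but not a different approach.
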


\begin{proof} Choosing $\UR=\CCc$ in Theorem \ref{res-09}, the assumption \eqref{eq-23}  holds true for all $u\in\CCc.$ The result now follows from Theorem \ref{res-09} and Lemma \ref{res-14}. The assertion $P_0=\ff g_0\,m$ is  \eqref{eq-13}. 
\\
The uniqueness is implied by a  general result of Girsanov's theory since $R$ is the unique solution to its own martingale problem and $H(P|R)<\infty.$ For an entropic point of view  under the present requirement that $H(P|R)<\infty$, see \cite{Leo11a}.
Otherwise, when $P\ll R$ is only assumed  this is a standard result of Girsanov's theory, see \cite{JaShi87}.   
\end{proof}

\subsection*{Kolmogorov diffusion process}

We illustrate this theorem by means of a diffusion process which plays an important role in the area of functional equalities connected with the concentration of measure phenomenon \cite{Ba92,Roy99,Led01,Vill09}.
\\
The Kolmogorov diffusion process is the unique solution of the SDE
\begin{equation*}
dX_t=-\nabla U(X_t)\, dt+ dW_t
\end{equation*}
where  $U$ is a $\mathcal{C}^2$-differentiable function on $\Rd$ such that $Z_U:=\IR e^{-2U(x)}\,dx<\infty.$
This SDE admits the Boltzmann-Gibbs probability measure
\begin{equation*}
m^U(dx):=Z_U^{-1}e^{-2U(x)}\,dx
\end{equation*}
as a reversing measure. We take this reversible Kolmorov diffusion as the reference process $R$. Hence, the initial law is $R_0=m^U$ and
$$
R\in \MP(-\nabla U, \mathrm{Id}).
$$
The generalized $h$-process to be considered here is $P$ specified by \eqref{eq-10} with the  assumptions of Theorem \ref{res-18}. This theorem tells us that $$P\in\MP(-\nabla U+\nablat^P \psi, \mathrm{Id}).$$
In the special case when the potential $V$ is zero, we have for all $0\le t<1,$
$$
g_t(x)=E_R(\gg(X_1)\mid X_t=x)
=[2\pi (1-t)]^{-d/2}\IR \gg(y)\exp\left(\frac{|y-x|^2}{2(1-t)}\right)\,dy.
$$
Therefore, $g\in\mathcal{C}^\infty([0,1)\times\Rd)$ and $g_t$ is positive for all $0\le t<1.$ It follows with 
$$
	\psi_t(x)=\log E_R(\gg(X_1)\mid X_t=x), \quad t\in [0,1), x\in\Rd,
$$
that $\nablat^P \psi_t=\nabla \psi_t$ and 
$$
P\in\MP\big(-\nabla[U-\psi],\mathrm{Id})
$$
and with Theorem \ref{res-06} we see that $\psi$ is a classical solution of the Hamilton-Jacobi-Bellman (HJB) equation
$$
\left\{\begin{array}{ll}
\LR\psi(t,x)
+\frac12 |\nabla \psi_t(x)|^2=0,&\quad t\in [0,1),x\in\Rd\\
\lim_{t\uparrow1}\psi_t(x):=\psi_1(x)=\log\gg(x),&\quad t=1,x\in \left\{\gg>0\right\}
\end{array}•\right.
$$
where $$\LR u(t,x)=\Big(\partial_t -\nabla U(x)\cdot \nabla +\frac12 \Delta\Big) u(t,x).$$

Let us go back to the general case when $V$ is not constant. The positivity improving property of the heat kernel implies that $\psi_t$ is well-defined for all $t\in [0,1).$ But it might not be smooth enough to be a classical solution of the HJB equation:
\begin{equation*}
\left\{\begin{array}{ll}
\LR\psi(t,x)
+\frac12 |\nabla \psi_t(x)|^2-V(t,x)=0,&\quad t\in [0,1),x\in\Rd\\
\lim_{t\uparrow1}\psi_t(x):=\psi_1(x)=\log\gg(x),&\quad t=1,x\in \left\{\gg>0\right\}
\end{array}\right.
\end{equation*}
Because of its semigroup representation \eqref{eq-36}, $\psi$ is a continuous viscosity solution of this equation, see \cite[Thm II.5.1]{FS93} for instance.

\section{Continuous-time Markov chains}\label{sec-chain}

In this section we examplify our results with simple Markov jump processes on a countable discrete space $\XX$ which are analogous to the Kolmogorov diffusion processes. The set of paths is $\OO=D(\ii,\XX).$

\subsection*{The reference process $R$}
Since $\XX$ is a countable discrete space, every function is measurable and continuous. Let $\BX$ denote the space of all real bounded functions on $\XX.$ The first ingredient is a Markov generator
\begin{equation}\label{eq-35}
    \int_\XX [u(y)-u(x)]\,J^0(x;dy),\quad u\in \BX
\end{equation}
where $J^0$ is a kernel of positive measures on $\XX$ such that $J^0(x;\left\{x\right\})=0$  for all $x\in\XX$ and
\begin{enumerate}[(i)]
\item
$J^0(x;\XX)<\infty,$ for all $x\in\XX;$
\item
$J^0$ induces an irreducible process in the sense that $J^0(x;\XX)>0$ for all $x\in\XX$ and for any couple of distinct states $(x,y),$ there exists a finite chain $x=z_1,z_2,\dots,z_n=y$ such that $J(z_i;\left\{ z_{i+1}\right\})>0$ for all $i;$
\item
$J^0$ satisfies the detailed balance condition
\begin{equation}\label{eq-X02}
    m^0(dx)J^0(x;dy)=m^0(dy)J^0(y;dx)
\end{equation}
for some  nonnegative measure $m^0$ on $\XX$ (possibly with an
infinite mass).  
\end{enumerate}
We say that $Q\in\PO$ solves the \emph{martingale problem $\MP(K)$} associated with the predictable jump kernel $K=K(t,X_{[0,t)};dy),$ if 
$$
u(t,X_t)-u(0,X_0)-\II0t ds \IX [u(s,y)-u(s,\lg Xs)]\,K(s,X_{[0,s)};dy),\quad t\in\ii
$$
is a local $Q$-martingale for a large class of functions $u$.

Under the assumption (i), there is a unique law $R^0\in\PO$ which solves the martingale problem with a prescribed initial law and the Markov generator  \eqref{eq-35}: $R^0\in\MP(J^0).$
Under the assumption (iii), the measure $m^0$ is its invariant measure  which is unique (up to
scalar multiplication) under the irreducibility assumption (ii).
\\
The second ingredient is a potential 
$U$ on $\XX$ such that
$	
    Z_U:=\IX e^{-2U}\,dm <+\infty.
$	
 The reference process $R$ is  the law of the Markov jump process with generator
\begin{eqnarray*}
  \LR u(x)&:=&\int_\XX [u(y)-u(x)]\, J(x;dy),\quad u\in\BX \textrm{\quad where} \\
  J(x;dy) &:=& \exp(-[U(y)-U(x)])\, J^0(x;dy)
\end{eqnarray*}
which is well defined for all $u\in\BX$ provided that
\begin{equation*}
    \int_\XX e^{-U(y)}\,J^0(x;dy)<+\infty,\quad \forall x\in\XX,
\end{equation*}
as this last integrability assumption implies that
\begin{equation}\label{eq-37}
J(x;\XX)<\infty,\quad \forall x\in\XX.
\end{equation}
It is easily seen that the Boltzmann-Gibbs probability measure
\begin{equation*}
    m^U(dx):=Z_U^{-1}e^{-2U(x)}\,m^0(dx)
\end{equation*}
is the reversing measure of the jump process $R$  since the detailed balance conditions are
satisfied. Indeed,
\begin{multline*}
  m^U(dx)J(x;dy) 
= e^{-2U(x)}e^{-[U(y)-U(x)]}\,m^0(dx)J^0(x;dy) \\
  = e^{-[U(x)+U(y)]}\,m^0(dx)J^0(x;dy)
  = e^{-[U(x)+U(y)]}\,m^0(dy)J^0(y;dx) 
  = m^U(dy)J(y;dx)
\end{multline*}
where (\ref{eq-X02}) has been used at the last but one equality. Therefore, 
$$
R\in\MP(J;m^U).
$$
Moreover, it is the unique solution of this martingale problem. Indeed, thanks to \eqref{eq-37} it is possible to build a unique strong solution on some auxiliary probability space: a combination of a discrete-time Markov chain with transition probabilities $J(x;dy)/J(x;\XX)$  and independent exponential clocks with frequencies $J(x;\XX),$ $x\in\XX$.

This reference law is sometimes called a \emph{Metropolis dynamics} on the set $\XX.$ It is useful for estimating $m^U$ when the very high cardinality of $\XX$ prevents us from computing the normalizing constant $Z_U.$

\subsection*{The martingale problem which is solved by $P$}

Now we consider the $h$-process $P.$  Applying Theorem \ref{res-09}, we need to compute $\GR(g,u)/g$ for  a large class of functions $u\in\UR.$ We choose this class to be $\BX$ for the following reasons. On  one hand, we can see that $\BX\subset\UR$ because with \eqref{eq-37} it is clear that $\BX\subset\dom\LR$ and for all $u\in\BX$ and $\alpha\ge0,$ $\IiX \exp(\alpha [u(y)-u(\lg Xt)])\,dt J(\lg Xt;dy)<\infty.$ On the other hand, we also see immediately with \eqref{eq-37} that \eqref{eq-23} holds for any bounded function $u$.

\begin{theorem}\label{res-20}
Let $R\in\MP(J;m^U)$ be as above, $P$ be the $h$-process specified at \eqref{eq-10} and assume also that $\ff,\gg$ and $V$ satisty the hypotheses of Theorem \ref{res-09}.
 
Then $P$ is the unique solution in $\left\{Q\in\PO;H(Q|R)<\infty\right\}$ of $\MP(J^P;P_0)$ with $P_0=\ff g_0\,m$ and
$$
 J^P(t,x;dy)=\exp\Big(\psi_t(y)-\psi_t(x)\Big)\,J(x;dy)=\frac{g_t(y)}{g_t(x)}\,J(x;dy),\quad dtP_t(dx)\ae
$$ 
where the function $$\psi(t,x):=\log g(t,x) =\log E_R \left[\exp \left(-\II t1 V_s(X_s)\,ds\right)\gg(X_1)\mid X_t=x\right],\quad dtP_t(dx)\ae$$ is still defined by \eqref{eq-11} and \eqref{eq-36}.
\end{theorem}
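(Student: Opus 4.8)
The plan is to obtain Theorem \ref{res-20} directly from the abstract formula of Theorem \ref{res-09}, specialized to the jump kernel $J$. The discussion preceding the statement has already secured the two hypotheses required to invoke that theorem on the test class $\BX$: on one hand $\BX\subset\UR$, since \eqref{eq-37} gives $\BX\subset\dom\LR$ and for every bounded $u$ and every $\alpha\ge0$ one has $\IiX\exp(\alpha[u(y)-u(\lg Xt)])\,dt\,J(\lg Xt;dy)<\infty$; on the other hand the moment condition \eqref{eq-23} is automatic because $\LR u$ is bounded whenever $u$ is. Theorem \ref{res-09} therefore yields, for each $u\in\BX$,
$$
\LP u(t,x)=L^Ru(t,x)+\frac{\GR(g,u)(t,x)}{g_t(x)},\quad dtP_t(dx)\ae,
$$
with no division by zero since $g_t>0$ $P_t\as$. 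All that remains is to evaluate the right-hand side for the present dynamics.

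The key step is the carr\'e du champ $\GR(g,u)$. I would use Corollary \ref{res-19}, which both guarantees $gu\in\dom\LR$ and gives $\GR(g,u)=\LR(gu)-g\,\LR u-u\,\LR g$. Plugging in the explicit generator $\LR v(t,x)=\IX[v_t(y)-v_t(x)]\,J(x;dy)$ and cancelling, one lands on the standard pure-jump expression
$$
\GR(g,u)(t,x)=\IX[g_t(y)-g_t(x)][u(y)-u(x)]\,J(x;dy),\quad dtm(dx)\ae,
$$
which is also what formula \eqref{eq-39} produces: $[g(X),u(X)]$ is purely discontinuous with jumps $[g_t(X_t)-g_t(\lg Xt)][u(X_t)-u(\lg Xt)]$, the absolutely continuous time-dependence of $g$ contributing nothing, and its compensator integrates these against $J$. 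Dividing by $g_t(x)$ and adding $L^Ru(t,x)=\IX[u(y)-u(x)]\,J(x;dy)$ telescopes into
$$
\LP u(t,x)=\IX[u(y)-u(x)]\,\frac{g_t(y)}{g_t(x)}\,J(x;dy)=\IX[u(y)-u(x)]\,J^P(t,x;dy),
$$
with $J^P(t,x;dy)=\frac{g_t(y)}{g_t(x)}J(x;dy)=\exp(\psi_t(y)-\psi_t(x))\,J(x;dy)$, exactly as claimed.

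Having pinned down $\LP$ on $\BX$, I would conclude that $P$ solves $\MP(J^P)$: the bounded functions, and in particular the indicators $\1_{\{x\}}$, form a determining class for a jump martingale problem on the countable space $\XX$, so the generator identity on $\BX$ characterizes the dynamics. The initial law $P_0=\ff g_0\,m$ is read off from \eqref{eq-13}. For uniqueness I would not touch $\GR$ again but repeat the Girsanov/entropy argument used for Theorem \ref{res-18}: $R$ is the unique solution of its own problem $\MP(J;m^U)$, every competitor $Q$ with $H(Q|R)<\infty$ is absolutely continuous with respect to $R$, and the variational characterization of the minimizer singles out $P$ within $\{Q\in\PO;H(Q|R)<\infty\}$. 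The only genuinely delicate point is the covariation computation: one must check that the time-variable of $g$ does not contaminate $[g(X),u(X)]$ --- it cannot, since the $t$-drift of $g(X)$ is absolutely continuous, hence of bounded variation and orthogonal to the purely discontinuous part --- and that the $\Delta_2$ and exponential-moment hypotheses inherited from Theorem \ref{res-09} legitimize exchanging the limit in \eqref{eq-39} with the $J$-integral.
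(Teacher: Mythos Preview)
Your argument is correct and follows the paper's proof essentially verbatim: invoke Corollary \ref{res-19} to write $\GR(g,u)=\LR(gu)-g\,\LR u-u\,\LR g$, compute the explicit pure-jump form $\IX[g_t(y)-g_t(x)][u(y)-u(x)]\,J(x;dy)$, divide by $g_t(x)$, add $L^Ru$, and apply Theorem \ref{res-09}. One small remark on uniqueness: the paper does not appeal to any variational characterization of a minimizer; it simply observes that $P\ll R$ and that $R$ is the unique solution of its own martingale problem, so uniqueness is inherited by Girsanov theory---your mention of the variational principle is superfluous here and could be dropped.
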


\begin{proof}
Corollary \ref{res-19} tells us that for all $u\in\BX,$ $\GR(g,u)=\LR(ug)-u\LR g-g\LR u.$ Hence, $\GR(g,u)(t,x)=\IX [u(y)-u(x)][g_t(y)-g_t(x)]\,J(x;dy),$ $dtm(dx)$ and 
\begin{eqnarray*}
\frac{\GR(g,u)}g(t,x)
&=&\IX [u(y)-u(x)]\left(\frac{g_t(y)}{g_t(x)}-1\right)\,J(x;dy)\\
&=& \IX [u(y)-u(x)]\left(e^{\psi_t(y)-\psi_t(x)}-1\right)\,J(x;dy),
\quad dtP_t(x)\ae
\end{eqnarray*}
We conclude with Theorem \ref{res-09} that $P$ solves the announced martingale problem. The uniqueness statement follows from the general Girsanov theory: because $P\ll R,$ it is inherited from the fact that $R$ is the unique solution of its martingale problem.
\end{proof}

As with the continuous diffusion processes, we see that some gradient of $\psi$ is involved in the shift from the dynamics of $R$ to the dynamics of the $h$-process $P.$ Indeed, denoting $$Du(x;y):=u(y)-u(x)$$ the discrete gradient of $u$ at $x,$ we  have
$$
J^P(x;dy)=\exp \Big( D \psi_t(x;y)\Big)\,J(x;dy).
$$
With Theorem \ref{res-06} we know that $\LR g=Vg.$ If $g$ is time-differentiable and positive on $[0,1)\times\XX$, we deduce that $\psi$ is a classical solution of the following integro-differential  HJB equation
$$
\LR \psi(t,x)+\IX \theta(D \psi_t(x;y))\, J(x;dy)-V(t,x)=0
$$
where $\theta(a):=e^a -a -1$ and $\LR$ is the generator whose value on any $t$-differentiable bounded function $u$ is 
$$
\LR u(t,x)=\partial_t u(t,x)+\IX Du_t(x;y)\,J(x;dy).
$$
In the general case when $g$ might not be time-differentiable and positive on $[0,1)\times\XX$, the semigroup representation of $\psi$ implies that $\psi$ is the unique continuous viscosity solution of the HJB equation
\begin{equation*}
\left\{\begin{array}{ll}
\LR\psi(t,x)
+\IX \theta(D \psi_t(x;y))\, J(x;dy)-V(t,x)=0,&\quad t\in [0,1),x\in\Rd\\
\lim_{t\uparrow1}\psi_t(x):=\psi_1(x)=\log\gg(x),&\quad t=1,x\in \left\{\gg>0\right\}.
\end{array}\right.
\end{equation*}


\end{document}